\newcommand{\Al}{\mathrm{A}}
\newcommand{\Sy}{\mathrm{S}}
\newcommand{\Aut}{\mathrm{Aut}}
\newcommand{\Out}{\mathrm{Out}}
\newcommand{\B}{\textup{B}}
\newcommand{\N}{\textbf{\textup{N}}}
\newcommand{\C}{\textbf{\textup{C}}}
\newcommand{\Soc}{\mathrm{Soc}}
\newcommand{\Core}{\mathrm{Core}}
\newcommand{\ord}{\mathrm{ord}}
\newcommand{\AGL}{\mathrm{AGL}}
\newcommand{\PSL}{\mathrm{PSL}}
\newcommand{\PGL}{\mathrm{PGL}}
\newcommand{\PGammaL}{\mathrm{P\Gamma L}}
\newcommand\Sp{\mathrm{Sp}}
\newcommand\GSp{\mathrm{CSp}}
\newcommand\PSp{\mathrm{PSp}}
\newcommand\PGSp{\mathrm{PCSp}}
\newcommand\CGammaSp{\mathrm{C\Gamma Sp}}
\newcommand\PCGammaSp{\mathrm{PC\Gamma Sp}}
\newcommand\PSU{\mathrm{PSU}}
\newcommand\PGU{\mathrm{PGU}}
\newcommand\Om{\mathrm{\Omega}}
\newcommand\POm{\mathrm{P\Omega}}
\newcommand\OO{\mathrm{O}}
\newcommand\BB{\mathrm{B}}
\newcommand\D{\mathrm{D}}
\newcommand\E{\mathrm{E}}
\newcommand\G{\mathrm{G}}
\newcommand\F{\mathrm{F}}
\newcommand\J{\mathrm{J}}
\newcommand\HS{\mathrm{HS}}
\newcommand{\M}{\textup{M}}
\newtheorem{theorem}{Theorem}[section]
\newtheorem{lemma}[theorem]{Lemma}
\newtheorem{proposition}[theorem]{Proposition}
\theoremstyle{definition}
\newtheorem{definition}[theorem]{Definition}
\newtheorem{example}[theorem]{Example}
\newtheorem{remark}[theorem]{Remark}
\begin{document}

\title[On biprimitive semisymmetric graphs]{On biprimitive semisymmetric graphs}

\author{Yunsong Gan}
\address{(Gan) School of Mathematics and Statistics\\Central South University\\Changsha, Hunan, 410083\\P.R. China}
\email{songsirr@126.com}

\author{Weijun Liu}
\address{(Liu) School of Mathematics and Statistics\\Central South University\\Changsha, Hunan, 410083\\P.R. China}
\email{wjliu6210@126.com}

\author{Binzhou Xia}
\address{(Xia) School of Mathematics and Statistics\\The University of Melbourne\\Parkville, VIC 3010\\Australia}
\email{binzhoux@unimelb.edu.au}

%\date\today

\begin{abstract}
A regular bipartite graph $\Gamma$ is called semisymmetric if its full automorphism group $\Aut(\Gamma)$ acts transitively on the edge set but not on the vertex set. For a subgroup $G$ of $\Aut(\Gamma)$ that stabilizes the biparts of $\Gamma$, we say that $\Gamma$ is $G$-biprimitive if $G$ acts primitively on each part. In this paper, we first provide a method to construct infinite families of biprimitive semisymmetric graphs admitting almost simple groups. With the aid of this result, a classification of $G$-biprimitive semisymmetric graphs is obtained for $G=\Al_n$ or $\Sy_n$.
% having an alternating socle.
In pursuit of this goal, we determine all pairs of maximal subgroups of $\Al_n$ or $\Sy_n$ with the same order and all pairs of almost simple groups of the same order.

\textit{Key words:} semisymmetric graphs; primitive groups; permutation representations

\textit{MSC2020:} 20B25, 20B15, 05E18
\end{abstract}

\maketitle

\section{Introduction}\label{SEC1}

All graphs considered in this paper are finite, undirected and simple. A graph $\Gamma$ is said to be \emph{vertex-transitive} or \emph{edge-transitive}, respectively, if its full automorphism group $\Aut(\Gamma)$ acts transitively on the vertex set or edge set, respectively. We call $\Gamma$ \emph{semisymmetric}, if it is regular and edge-transitive but not vertex-transitive. The study of semisymmetric graphs is initiated by Folkman. In~1967, he constructed some infinite families of such graphs and proposed eight open problems~\cite{Folkman1967}, nearly all of which have now been resolved (see~\cite[Section~6]{CV2019} for a summary). For more results on semisymmetric graphs, we refer the reader to~\cite{Bouwer1972,CMMP2006,Ivanov1987,LX2002,MP2002,WG2021,Wilson2003}.
%***It is worth noting that every semisymmetric graph is bipartite with the two parts of equal size being precisely the orbits of $\Aut(\Gamma)$.

Let $\Gamma$ be a bipartite graph, and let $G$ be a subgroup of $\Aut(\Gamma)$ which stabilizes the biparts of $\Gamma$. We say that $\Gamma$ is \emph{$G$-biprimitive}, if $G$ acts primitively on each of the biparts of $\Gamma$. When not emphasizing $G$, we simply call $\Gamma$ \emph{biprimitive}. The first biprimitive semisymmetric graphs were given in~1985 by Iofinova and Ivanov~\cite{II1985}, as examples of such graphs of valency $3$. In 1999, Du and Maru\v{s}i\v{c}~\cite{DM19991} constructed an infinite family of $G$-biprimitive semisymmetric graphs with $G=\PSL_2(p)$, where $p$ is a prime such that $p\equiv\pm1\ (\bmod\ 8)$.

A group $G$ is said to be \emph{almost simple} with \emph{socle} $\Soc(G)=T$ if $T\leq G\leq\Aut(T)$ for some nonabelian simple group $T$. The above mentioned examples of $G$-biprimitive semisymmetric graphs by Iofinova and Ivanov~\cite{II1985} and by Du and Maru\v{s}i\v{c}~\cite{DM19991} all have $G$ almost simple. In fact, to the best of our knowledge, all known $G$-biprimitive semisymmetric graphs so far are in the case when $G$ is almost simple. The aim of this paper is to study $G$-biprimitive semisymmetric graphs with almost simple $G$.

Our first main result is a group-theoretic construction of biprimitive semisymmetric graphs. Let $L$ and $R$ be maximal subgroups of a group $G$, and let $D$ be a union of double cosets of $R$ and $L$ in $G$. The \emph{bi-coset graph} $\Gamma=\B(G,L,R,D)$ (see Definition~\ref{DEF001}) is bipartite, and the right multiplication of $G$ induces a subgroup of $\Aut(\Gamma)$ that is biprimitive and edge-transitive but not vertex-transitive. In general, it is not easy to determine whether $\Aut(\Gamma)$ is vertex-transitive or not, which is a primary obstacle in constructing biprimitive semisymmetric graphs. In light of this, we provide in the following theorem a sufficient condition for $\B(G,L,R,D)$ to be semisymmetric.
%Let $\Gamma$ be a $G$-biprimitive bipartite graph. Take vertices $u$ and $v$ from the two parts of $\Gamma$, respectively, and let $L$ and $R$ be the stabilizers in $G$ of $u$ and $v$. A simple derivation (see Subsection~\ref{SUBSEC2.1}) shows that $\Gamma$ is isomorphic to a \emph{bi-coset graph} $\B(G,L,R,D)$ of $G$, where $D$ is a union of double cosets of $R$ and $L$ in $G$. Moreover, if $\Gamma$ is regular, then $|L|=|R|$.

\begin{theorem}\label{THM001}
Let $G$ be an almost simple primitive group with socle $T$ and degree $n$, let $H$ be a point stabilizer in $G$, and let $g\in G$. Suppose that the triple $(G,H,g)$ satisfies the conditions:
\begin{enumerate}[{\rm(a)}]
\item\label{THM001.2} $HgH\neq Hg^{-1}H$;
\item\label{THM001.1} except for $\Al_n$ and $\Sy_n$, each overgroup of $G$ in $\Sy_n$ has socle $T$;
\item\label{THM001.3} $\Aut(T)$ has a core-free maximal subgroup $K$ containing $H$ such that $\Aut(T)=G(K\cap K^{g})$.
\end{enumerate}
Then the bi-coset graph $\B(G,H,H,HgH)$ is semisymmetric with the full automorphism group $\Aut(T)$.
\end{theorem}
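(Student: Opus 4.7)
The plan is to establish three assertions, from which the theorem follows: (1) $\Aut(T)$ embeds into $\Aut(\Gamma)$ as a bipart-preserving subgroup; (2) every bipart-preserving automorphism of $\Gamma$ lies in $\Aut(T)$; and (3) no automorphism of $\Gamma$ interchanges the two biparts. Together with the biprimitive, edge-transitive action of $G$ inherent in the bi-coset construction of Definition~\ref{DEF001}, these yield $\Aut(\Gamma) = \Aut(T)$ and, as $\Aut(T)$ is transitive on each bipart while preserving the bipartition, the semisymmetry of $\Gamma$.

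For~(1), write $A = \Aut(T)$. Using condition~(c), one has $A = G(K \cap K^g) \leq GK \cap GK^g$, hence $A = GK = GK^g$, and so $G$-equivariant identifications of the biparts of $\Gamma$ with $[A:K]$ and $[A:K^g]$. The same condition forces $K = H(K \cap K^g)$ and $K^g = H^g(K \cap K^g)$, from which a short calculation yields the double-coset identity $K^g K \cap G = H^g H$. This identifies $\Gamma$ with the bi-coset graph $\B(A, K, K^g, K^g K)$, on which $A$ acts by right multiplication as bipart-preserving automorphisms of $\Gamma$. For~(2), I would consider the action of the bipart-preserving subgroup $\Aut(\Gamma)^+$ on one bipart $V_0$, which is a primitive overgroup of $G$ of degree $n$. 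By condition~(b), this overgroup either has socle $T$---and so lies in the natural image of $A$ on $V_0$---or is $\Al_n$ or $\Sy_n$. The last two cases are excluded by orbital analysis: as $\Al_n$ and $\Sy_n$ are $2$-transitive, the only invariant bipartite graphs are complete bipartite, perfect matching, or matching-complement, and none of these is $\B(G, H, H, HgH)$ with $HgH \neq Hg^{-1}H$ (for instance, a perfect matching would force $g \in N_G(H) = H$, whence $HgH = H = Hg^{-1}H$).

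For~(3), suppose some $\sigma \in \Aut(\Gamma)$ swaps the biparts. Then $\sigma$ normalises $\Aut(\Gamma)^+ = A$; since $A$ is a complete group for any nonabelian finite simple $T$, after multiplying by a suitable element of $A$ one may assume $\sigma$ centralises $A$. Self-normalisation of the core-free maximal subgroups $K$ and $K^g$ in $A$ then uniquely determines $\sigma$ as the $A$-equivariant bipart-swap $Kx \mapsto K^g g^{-1}x$, $K^g y \mapsto Kgy$. Requiring $\sigma$ to preserve the edge set, after unwinding the coset identities $gK^g = Kg$ and $g^{-1}K = K^g g^{-1}$, reduces to the single algebraic condition $g^2 \in KK^g$; hence $g^2 \in KK^g \cap G = HH^g$ by the symmetric analogue of the identity in~(1). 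Writing $g^2 = h_1 g^{-1}h_2 g$ with $h_1, h_2 \in H$ then gives $g = h_1 g^{-1}h_2 \in Hg^{-1}H$, so $HgH = Hg^{-1}H$, contradicting~(a). The main obstacle in the proof is this last step: the bookkeeping of the coset identifications and the derivation of the key equalities $K^g K \cap G = H^g H$ and $KK^g \cap G = HH^g$ from condition~(c) require care, but once they are in hand the algebraic reduction to $g^2 \in HH^g$ and the contradiction with~(a) are essentially mechanical.
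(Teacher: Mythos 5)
Your proposal is correct and follows essentially the same strategy as the paper: condition~(c) is used (via the bi-coset isomorphism of Lemma~\ref{LEM001}) to realize $\Aut(T)$ inside the part-preserving group $A^+$, condition~(b) together with the faithfulness/$\Al_n$-exclusion arguments of Lemmas~\ref{LEM005} and~\ref{CORO002} gives $A^+\leq\Aut(T)$, and condition~(a) rules out a part-swapping automorphism. The only differences are cosmetic: you invoke completeness of $\Aut(T)$ where the paper uses Lemma~\ref{LEM002}, and you compute the swap condition $g^2\in KK^g$ in the conjugated model $\B(\Aut(T),K,K^g,K^gK)$ where the paper derives $KgK=Kg^{-1}K$ via Lemma~\ref{LEM004}.
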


\begin{remark}
According to Lemma~\ref{LEM003}\eqref{LEM003.4}, condition~\eqref{THM001.2} of Theorem~\ref{THM001} is necessary for the bi-coset graph $\B(G,H,H,HgH)$ to be semisymmetric. Moreover, most almost simple groups $G$ satisfy condition~\eqref{THM001.1}, with all exceptions listing in~\cite[Table~II--VI]{LPS1987}.
%In fact, all possibilities of almost simple primitive groups $G$ which do not satisfy~\eqref{THM001.1} are listed in~\cite[Table~II--VI]{LPS1987}.
\end{remark}

It turns out that conditions~\eqref{THM001.2}--\eqref{THM001.3} in Theorem~\ref{THM001} are not difficult to satisfy for specific cases. We construct two infinite families of biprimitive semisymmetric graphs in Examples~\ref{CON002} and~\ref{CON003} using Theorem~\ref{THM001}, and more examples can be constructed similarly.

There have been some classification results on biprimitive semisymmetric graphs of specific order or valency. The cubic biprimitive semisymmetric graphs are determined by Iofinova and Ivanov~\cite{II1985}. It is known by Du and Maru\v{s}i\v{c}~\cite{DM1999} that the smallest biprimitive semisymmetric graphs have order $80$. In~\cite{DX2000}, Du and Xu classified semisymmetric graphs of order $2pq$, and in particular, the biprimitive ones (see~\cite[Theorem~2.8]{DX2000}). For more recent classification results, we refer to~\cite{CL2024,FW2023,HL2015,LL2022}.

However, no classification results on $G$-biprimitive semisymmetric graphs have been obtained in the general case without specifying the order or valency, even when $G$ is almost simple, the case of all known examples. As the first attempt to classify $G$-biprimitive semisymmetric graphs with almost simple $G$, our second main result classifies those where $G$ is an alternating or symmetric group.

Let $\Gamma$ be a $G$-biprimitive semisymmetric graph. Take vertices $u$ and $w$ from the two parts of $\Gamma$, respectively, and let $L$ and $R$ be the stabilizers in $G$ of $u$ and $w$. A simple derivation (see Subsection~\ref{SUBSEC2.1}) shows that $|L|=|R|$ and that $\Gamma$ is isomorphic to $\B(G,L,R,RgL)$ for some $g\in G$. Clearly, the maximal subgroups $L$ and $R$ in $G$ fall into one of the following cases:
%and let $L$ and $R$, respectively, be the vertex stabilizers in $G$ of the two parts of $\Gamma$. Then one of the following cases occurs:
%According to the relationship between $L$ and $R$ in $G$,
\begin{itemize}
\item $L$ and $R$ are conjugate in $G$;
\item $L$ and $R$ are isomorphic but not conjugate in $G$;
\item $L$ and $R$ have the same order but are not isomorphic.
\end{itemize}
To study the case when $G=\Al_n$ or $\Sy_n$, we are motivated to determine all pairs of maximal subgroups of $\Al_n$ or $\Sy_n$ with the same order, which is addressed below.

\begin{proposition}\label{THM002}
Let $G$ be an almost simple group with socle $\Al_n$, where $n\geq5$, and let $H$ and $K$ be maximal subgroups of $G$ such that $|H|=|K|$. Then, interchanging $H$ and $K$ if necessary, one of the following holds:
\begin{enumerate}[{\rm(a)}]
\item\label{THM002.1} $H$ and $K$ are conjugate in $\Aut(\Al_n)$;
\item\label{THM002.2} $H$ and $K$ are almost simple satisfying one of the following:
\begin{enumerate}[{\rm(b.1)}]
\item\label{THM002.21} $\Soc(H)\cong\Soc(K)$,
\item\label{THM002.22} $\Soc(H)\cong\PSp_{2m}(q)$ and $\Soc(K)\cong\POm_{2m+1}(q)$ with $m\geq3$ and $q$ odd prime power,
\item\label{THM002.23} $\Soc(H)\cong\PSL_3(4)$ and $\Soc(K)\cong\PSL_4(2)$;
\end{enumerate}
\item\label{THM002.4} $H$ and $K$ are primitive of diagonal type on $n$ points such that $\Soc(H)\cong(\PSp_{2m}(q))^k$ and $\Soc(K)\cong(\POm_{2m+1}(q))^k$ with $k\geq2$, $m\geq3$ and $q$ odd prime power;
\item\label{THM002.3} $n=6$, $G=\Al_6$ or $\Sy_6$, and $(H,K)=((\Sy_4\times\Sy_2)\cap G,(\Sy_2\wr\Sy_3)\cap G)$.
\end{enumerate}
\end{proposition}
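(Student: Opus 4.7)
The plan is to apply the Liebeck--Praeger--Saxl version of the O'Nan--Scott theorem, which partitions the maximal subgroups of $G$ into six types: intransitive ($(\Sy_k\times\Sy_{n-k})\cap G$), imprimitive ($(\Sy_k\wr\Sy_m)\cap G$ with $n=km$), and primitive of affine, diagonal, product (wreath), or almost simple type. I would then examine, case by case, which pairs of types $(H,K)$ can yield $|H|=|K|$.

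When $H$ and $K$ share an O'Nan--Scott type, the order essentially recovers the conjugacy class. In the intransitive case $k!(n-k)!$ determines the unordered pair $\{k,n-k\}$ (noting $k\neq n-k$ for maximality); in the imprimitive case $(k!)^m m!$ with $km=n$ determines $(k,m)$; in the affine case $n=p^d$ with $p$ prime recovers $(p,d)$; and in the diagonal and product cases the decomposition $n=|T|^{k-1}$ or $n=n_0^k$ together with $|\Soc(H)|$ recovers the structure. This yields conclusion~\eqref{THM002.1} in every same-type subcase except the almost simple one, where I would invoke the paper's companion classification of pairs of almost simple groups of equal order to extract exactly~\eqref{THM002.21}--\eqref{THM002.23}.

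For cross-type comparisons, the bulk of pairs are eliminated by coarse order estimates: any primitive maximal subgroup of $G$ other than $\Al_n$ has order less than $n^{\log_2 n}$ by the Mar\'oti bound, which is dwarfed by the order of any intransitive or imprimitive maximal subgroup except in small degrees. Running through the small cases leaves only the exceptional pair $(\Sy_4\times\Sy_2,\Sy_2\wr\Sy_3)$ of order $48$ at $n=6$, producing conclusion~\eqref{THM002.3}. The one remaining genuine cross match is diagonal versus product (and diagonal versus diagonal with different socles), where equating orders and degrees forces the two simple factors to have equal order; combined with the almost simple classification this yields the $\PSp_{2m}(q)\leftrightarrow\POm_{2m+1}(q)$ coincidence of~\eqref{THM002.4}, while the $\PSL_3(4)\leftrightarrow\PSL_4(2)\cong\Al_8$ coincidence is excluded because the $\Al_8$ factor would embed into the ambient alternating group in a way that violates maximality of the diagonal/product construction.

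The main obstacle is the low-degree bookkeeping: the asymptotic order bounds do not suffice for small $n$, so one must inspect all primitive almost simple maximal subgroups of small degree explicitly to confirm that no further coincidences with intransitive or imprimitive orders arise beyond~\eqref{THM002.3}. Novelty maximal subgroups in $\Al_n$, and especially the additional novelties at $n=6$ caused by the outer automorphism of $\Sy_6$, must be tracked carefully to ensure completeness of the list and the absence of any further exceptional pairs.
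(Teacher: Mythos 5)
Your overall strategy---the six-type decomposition of maximal subgroups, same-type comparisons, and reduction of the almost simple case to the companion classification (Proposition~\ref{THM004})---is exactly the paper's plan (Lemmas~\ref{PROP001}--\ref{PROP005}). However, the cross-type analysis has two genuine gaps. First, the Mar\'oti bound only separates a \emph{primitive} maximal subgroup from an intransitive or imprimitive one; it says nothing about intransitive versus imprimitive, where both orders are of magnitude at least $\sqrt{n!}$. The critical subcase is $(\Sy_m\times\Sy_{n-m})\cap G$ against $(\Sy_{n/2}\wr\Sy_2)\cap G$: equating orders reduces to the Diophantine equation $\binom{n+k}{k}=2\binom{n}{k}$, and the paper devotes an entire lemma (Lemma~\ref{lem0.1}, using Legendre's formula, Stirling's formula and prime-counting estimates) to showing it has only the trivial solution. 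Your ``coarse order estimates'' do not touch this. Second, cross-comparisons between two \emph{primitive} types (affine vs.\ almost simple, affine vs.\ product, product vs.\ almost simple, diagonal vs.\ almost simple) are equally invisible to the Mar\'oti bound, since both sides sit below it; the paper disposes of these using Guralnick's classification of simple groups with a subgroup of prime-power index (Lemma~\ref{lem0.0}), the Liebeck--Praeger--Saxl result that forces every prime divisor of $|T|$ to divide the point stabilizer (Lemma~\ref{lem0.8} and Table~\ref{TABLE001}), and repeated $p$-adic valuation arguments. Your proposal addresses only the diagonal-versus-product and diagonal-versus-diagonal comparisons among the primitive cross-types.

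Two smaller points. Your exclusion of $\{\PSL_3(4),\PSL_4(2)\}$ in the diagonal case via a ``violation of maximality'' is not the actual mechanism: diagonal-type subgroups with socle a power of $\Al_8$ are perfectly good maximal subgroups. The pair is excluded simply because $|\Out(\PSL_3(4))|=12\neq2=|\Out(\PSL_4(2))|$, so the two full diagonal normalizers have different orders even though their socles do not. And the same-type uniqueness claims you treat as immediate---that $(a!)^b\,b!=(x!)^y\,y!$ with $ab=xy$ forces $a=x$, $b=y$, and the analogous statement for degrees $a^b=x^y$---each require their own Bertrand-postulate and valuation arguments in the paper (Lemmas~\ref{lem0.7} and~\ref{lem0.9}); they are true but not free.
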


%We remark that Theorem~\ref{THM002} also implies the uniqueness of primitive permutation representations of $\Al_n$ and $\Sy_n$. Let $G$ be a primitive group with an alternating socle. Then any two permutation representations must be permutation-isomorphic, except for the cases in Theorem~\ref{THM002}\eqref{THM002.2}--\eqref{THM002.3}.
As a byproduct of proving Proposition~\ref{THM002}, we identify all pairs of almost simple groups of the same order. This result, of independent interest, is presented in the following proposition.

\begin{proposition}\label{THM004}
Let $H$ and $K$ be almost simple groups of the same order such that $\Soc(H)\ncong\Soc(K)$. Then either $\{\Soc(H),\Soc(K)\}=\{\POm_{2m+1}(q),\PSp_{2m}(q)\}$ for some integer $m\geq3$ and odd prime power $q$, or $\{\Soc(H),\Soc(K)\}=\{\PSL_3(4),\PSL_4(2)\}$.
\end{proposition}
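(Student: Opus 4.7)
The plan is to reduce to the refinement of Artin's theorem on orders of finite simple groups due to Kimmerle, Lyons, Sandling and Teague (KLST), which asserts that non-isomorphic finite simple groups of the same order are precisely the pairs appearing in the statement. Set $T_1=\Soc(H)$, $T_2=\Soc(K)$ and $a=[H:T_1]$, $b=[K:T_2]$, so that $a\mid|\Out(T_1)|$, $b\mid|\Out(T_2)|$, and the hypothesis $|H|=|K|$ becomes $|T_1|\,a=|T_2|\,b$.

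\emph{Stage~1: reducing to $|T_1|=|T_2|$.} Suppose for a contradiction that $|T_1|\neq|T_2|$; interchanging $H$ and $K$ if necessary we may assume $|T_1|>|T_2|$. Then $b>a$ and
\[
|T_2|<|T_1|=|T_2|\,b/a\leq|T_2|\cdot|\Out(T_2)|=|\Aut(T_2)|.
\]
I would invoke the classification of finite simple groups and the tabulated values of $|\Out(T)|$ (from the Atlas and from Kleidman--Liebeck). Since $|\Out(T_2)|\leq4$ when $T_2$ is alternating, $|\Out(T_2)|\leq2$ when $T_2$ is sporadic, and $|\Out(T_2)|$ is a product of comparatively small diagonal, field and graph contributions when $T_2$ is of Lie type, the interval $(|T_2|,|\Aut(T_2)|]$ contains very few orders of simple groups. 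For each remaining candidate pair $(T_1,T_2)$ with $T_1\not\cong T_2$, I would exhibit a prime $p$ dividing neither $|\Out(T_1)|$ nor $|\Out(T_2)|$ but with $\nu_p(|T_1|)\neq\nu_p(|T_2|)$; such a prime is incompatible with $|T_1|\,a=|T_2|\,b$ and yields the required contradiction. In comparisons between two groups of Lie type over the same characteristic, a Zsigmondy-type primitive prime divisor of a suitable $q^i-1$ or $q^i+1$ typically supplies such a prime.

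\emph{Stage~2: invoking KLST.} With $|T_1|=|T_2|$ and $T_1\not\cong T_2$, the KLST theorem directly gives that $\{T_1,T_2\}$ is either $\{\POm_{2m+1}(q),\PSp_{2m}(q)\}$ for some $m\geq3$ and odd prime power $q$, or $\{\PSL_3(4),\PSL_4(2)\}$, which is the desired conclusion.

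The main obstacle is the enumeration in Stage~1: although each individual exclusion is routine once a distinguishing prime is found, the classical groups of Lie type form several infinite families whose orders are polynomial in the field size $q$ while $|\Out|$ is only logarithmic in $q$, so the candidate list, though finite for each fixed $T_2$, must be examined family by family. The most delicate subcases are those in which $T_1$ and $T_2$ are both of Lie type with related parameters, where the primitive prime divisor machinery is indispensable to separate the orders.
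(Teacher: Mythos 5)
Your Stage~2 is fine, but Stage~1 contains essentially all of the difficulty of the proposition and is not actually carried out: as written it is a restatement of the problem rather than a reduction. What must be excluded is $|T_1|\,a=|T_2|\,b$ with $a\mid|\Out(T_1)|$ and $b\mid|\Out(T_2)|$, i.e.\ that $|T_1|/|T_2|$ equals a ratio of divisors of outer automorphism group orders; classifying the pairs of simple groups whose orders agree up to such a factor is not easier than classifying those with equal orders, and it cannot be done ``for each remaining candidate pair'' because the candidates range over infinitely many pairs drawn from infinite families in which both the rank and the field size vary. A distinguishing prime of the kind you propose to ``exhibit'' is exactly what fails to exist in the delicate cases: the paper's Table~\ref{TABLE004} records the genuine near-coincidences, for instance $\PSL_3(q^2)$ versus $\PSU_4(q)$, $\PSp_4(q^3)$ versus ${^{3}\D}_4(q)$, and $\G_2(q^2)$ versus ${^{3}\D}_4(q)$, where all Artin invariants agree and the lower bound on the order ratio is not large enough to beat $|\Out|$ outright; each such pair needs a separate estimate or a primitive-prime-divisor argument, and producing the \emph{complete} list of such coincidences is precisely the step your plan leaves blank.

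The paper does not pass through the equal-socle-order case at all; it extends the Kimmerle--Lyons--Sandling--Teague framework directly to almost simple groups. Subsection~\ref{SUBSEC5.1} shows that, with an explicit finite list of exceptions, the dominant prime of $|G|$ is the defining characteristic (Propositions~\ref{PROP011} and~\ref{PROP012}); Subsection~\ref{subsec5.2} computes the logarithmic proportion and the Artin invariants $r,\ell,\omega,\psi$ together with the derived quantities $F_1,F_2$ for all almost simple groups of Lie type (Tables~\ref{TABLE002} and~\ref{TABLE003}); and the proof of the proposition then compares these invariants across the alternating, sporadic and Lie-type families to reduce to the short list of Table~\ref{TABLE004}, which is eliminated by order-ratio bounds. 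To make your Stage~1 rigorous you would have to build an equivalent uniform apparatus, at which point the detour through the simple-group theorem gains nothing; the one genuine economy in your plan, quoting KLST at the end, is already how the paper handles the analogous step for diagonal-type subgroups elsewhere, but it cannot substitute for the order comparison of the almost simple groups themselves.
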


Using Proposition~\ref{THM002}, we adopt the analysis and notation in the paragraph above Proposition~\ref{THM002} to classify $G$-biprimitive semisymmetric graphs, where $G$ is an alternating or symmetric group.
%We apply Proposition~\ref{THM002} to determine all possible pairs $(L,R)$ in cases~\eqref{II} or~\eqref{III}. If $(L,R)$ falls under case~\eqref{I} and $G$ satisfies Theorem~\ref{THM001}\eqref{THM001.1}, then $\Gamma$ is either obtained by Theorem~\ref{THM001} or described as in Construction~\ref{CON001}. More precisely, we establish the following theorem.
This approach yields the second main result of the paper:

\begin{theorem}\label{THM003}
Let $\Gamma$ be a $G$-biprimitive semisymmetric graph such that $G$ is almost simple with alternating socle $T$. Then $\Gamma\cong\B(G,L,R,RgL)$ for some maximal subgroups $L$ and $R$ of $G$ and $g\in G$ such that one of the following holds:
\begin{enumerate}[{\rm(a)}]
\item\label{THM003.1} $L$ and $R$ are conjugate of index $d$ in $G$, and one of the following cases occurs:
\begin{enumerate}
\item[{\rm(a.1)}]\label{THM003.11} there exists an overgroup $H$ of $G$ in $\Sy_d$ whose socle is neither $T$ nor $\Al_d$, and all possible pairs $(G,H)$ with $G$ maximal in $H$ are listed in~\cite[Table~II--VI]{LPS1987};
\item[{\rm(a.2)}]\label{THM003.12} $\Gamma$ is isomorphic to a semisymmetric graph as in Theorem~$\ref{THM001}$;
\item[{\rm(a.3)}]\label{THM003.13} $\Gamma$ is isomorphic to a semisymmetric graph as in Proposition~$\ref{CON001}$.
\end{enumerate}
%Then except when $G$ acting on each part has an overgroup whose socle is not equal to $T$ or $\Al_d$, the graph $\Gamma$ is semisymmetric if and only if it is a graph as described in Theorem~\ref{THM001} or Construction~\ref{CON001}.
\item\label{THM003.2} $L$ and $R$ are isomorphic and non-conjugate in $G$, and either $L$ and $R$ are almost simple with the same socle, or $G$ is an alternating group.
\item\label{THM003.3} $L$ and $R$ are of the same order and non-isomorphic, and lie in Proposition~$\ref{THM002}$\eqref{THM002.2}--\eqref{THM002.3}.
\end{enumerate}
\end{theorem}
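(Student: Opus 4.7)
\emph{Plan.} The plan is to exploit the bi-coset dictionary and funnel the classification through Proposition~\ref{THM002}. I first pick vertices $u$ and $w$ in the two biparts of $\Gamma$ and set $L=G_u$ and $R=G_w$. Biprimitivity forces $L$ and $R$ to be maximal in $G$, regularity gives $|L|=|R|$, and edge-transitivity combined with the standard observation preceding Proposition~\ref{THM002} produces an isomorphism $\Gamma\cong\B(G,L,R,RgL)$ for some $g\in G$. Applying Proposition~\ref{THM002} to the pair $(L,R)$, and swapping $L$ and $R$ if necessary, places $(L,R)$ into one of cases~(a)--(d) of that result.

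Next, I route each Proposition~\ref{THM002} case into a case of Theorem~\ref{THM003}. Cases~(b.2), (b.3), (c), and (d) yield $L$ and $R$ of the same order but non-isomorphic and so give case~\eqref{THM003.3} directly. Case~(b.1) gives almost simple $L$ and $R$ with the same socle: if they are $G$-conjugate one can replace $w$ by a suitable translate so that $R=L$, entering case~\eqref{THM003.1}; otherwise we obtain the first alternative of case~\eqref{THM003.2}. Case~(a) of Proposition~\ref{THM002} gives $L$ and $R$ conjugate in $\Aut(T)$; if the conjugacy already takes place in $G$ we are again in case~\eqref{THM003.1}, and otherwise $L\cong R$ are non-conjugate in $G$ -- an inspection by O'Nan--Scott type then shows that either $L$ and $R$ are almost simple with the same socle, or the failure of $G$-conjugacy in the presence of $\Aut(T)$-conjugacy forces $G=\Al_n$, yielding the two alternatives of case~\eqref{THM003.2}.

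It remains to refine case~\eqref{THM003.1}, where $\Gamma\cong\B(G,L,L,LgL)$ with $d:=[G:L]$. If some overgroup of $G$ in $\Sy_d$ other than $\Al_d$ and $\Sy_d$ has socle distinct from $T$, then (a.1) holds via~\cite[Tables~II--VI]{LPS1987}. Otherwise condition~\eqref{THM001.1} of Theorem~\ref{THM001} is met; condition~\eqref{THM001.2} is forced by semisymmetry via Lemma~\ref{LEM003}\eqref{LEM003.4}; and whether condition~\eqref{THM001.3} holds then decides whether Theorem~\ref{THM001} produces~(a.2), or we must invoke the alternative construction of Proposition~\ref{CON001} to obtain~(a.3). \textbf{The hardest step} is this final dichotomy: verifying that whenever Theorem~\ref{THM001} does not apply, the graph $\Gamma$ is nonetheless captured by Proposition~\ref{CON001}. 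Under the failure of~(a.1), the socle of $\Aut(\Gamma)$ is confined to $\{T,\Al_d\}$, so the technical core is to analyse the possible extensions of $G$ inside $\Aut(\Gamma)$ and check that each matches exactly one of the two constructions.
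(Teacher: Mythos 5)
Your proposal is correct and follows essentially the same route as the paper: the bi-coset representation, the trichotomy on $(L,R)$ resolved via Proposition~\ref{THM002}, and the Theorem~\ref{THM001}/Proposition~\ref{CON001} dichotomy in the conjugate case. The ``hardest step'' you flag dissolves more easily than you suggest: since $d\geq40$ forces $n>6$, condition~\eqref{THM001.3} of Theorem~\ref{THM001} holds automatically when $G=\Sy_n$ (take $K=L$), so the (a.2)/(a.3) split is simply $G=\Sy_n$ versus $G=\Al_n$, with the ``only if'' direction of Proposition~\ref{CON001} supplying the required normalizer condition in the latter case because $\Gamma$ is already known to be semisymmetric.
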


\begin{remark}\label{REM001}
Let $G$ be almost simple with alternating socle. For a maximal subgroup $L$ of $G$ and $g\in G$, the bi-coset graph $\B(G,L,L,LgL)$ is semisymmetric if $(G,L,g)$ satisfies the conditions of Theorem~\ref{THM001} or Proposition~$\ref{CON001}$. We provide two such semisymmetric graphs in Examples~\ref{CON002} and~\ref{EX1} respectively, which also serve as examples in cases~(\ref{THM003.1}.2) and~(\ref{THM003.1}.3) of Theorem~\ref{THM003}.
\end{remark}

For $G$-biprimitive semisymmetric graphs of twice odd order, where $G=\Al_n$ or $\Sy_n$, we have the following more precise result.

\begin{proposition}\label{CORO003}
Let $G$ be almost simple with socle $\Al_n$ and $n\geq8$, and let $\Gamma$ be a $G$-biprimitive graph with each part of odd size such that $\Gamma$ is neither a complete bipartite graph nor an empty graph. Then $\Gamma\cong\B(G,H,H,HgH)$ for some $g\in G$ and some intransitive or imprimitive subgroup $H$ of $G$ such that one of the following holds:
%Let $G$ be an almost simple group with alternating socle $\Al_n$. Then each $G$-biprimitive semisymmetric graph of twice odd order is isomorphic to a bi-coset graph $\Gamma=\B(G,H,H,HgH)$, where $g\in G$ and the maximal subgroup $H$ is an intransitive or imprimitive subgroup of $G$, such that one of the following holds.
%Conversely, such graph $\Gamma$ is semisymmetric if and only if:
\begin{enumerate}[{\rm(a)}]
\item\label{CORO003.1} $(G,H,g)$ satisfies Theorem~$\ref{THM001}$\eqref{THM001.3}, and $\Gamma$ is semisymmetric if and only if $HgH\neq Hg^{-1}H$;
\item\label{CORO003.2} $H$ is an imprimitive subgroup of $G=\Al_n$, and $\Gamma$ is semisymmetric if and only if $Hg^{-1}H\neq Hg^{\iota}H$ for every $\iota\in\N_{\Sy_n}(H)\setminus H$.
\end{enumerate}
\end{proposition}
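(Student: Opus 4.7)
The plan is to analyze the stabilizers in $G$ and reduce $\Gamma$ to a bi-coset graph with a single double coset. Take vertices $u$ and $w$ from the two parts of $\Gamma$, and set $L = G_u$, $R = G_w$. Since $\Gamma$ is $G$-biprimitive with parts of odd size, $L$ and $R$ are maximal subgroups of $G$ of odd index, and $\Gamma$ is $G$-isomorphic to $\B(G, L, R, D)$ for some union $D$ of $(L, R)$-double cosets.

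I would first apply Proposition~\ref{THM002} to the pair $(L, R)$. Since $n \geq 8$, case~\eqref{THM002.3} is excluded; the remaining cases~\eqref{THM002.2} and~\eqref{THM002.4} describe pairs where $L$ and $R$ act primitively on the $n$ points (as almost simple groups or of diagonal type). I would rule these out by invoking the Liebeck--Saxl classification of primitive permutation groups of odd degree, which implies that for $n \geq 8$ no such primitive maximal subgroup has odd index in $\Al_n$ or $\Sy_n$. This forces case~\eqref{THM002.1}, i.e., $L$ and $R$ are $\Aut(\Al_n)$-conjugate; for intransitive or imprimitive maximal subgroups a standard argument (using that their $\Sy_n$-normalizers contain odd elements) promotes $\Sy_n$-conjugacy to $G$-conjugacy, so that $H := L = R$ after conjugation. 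Moreover, by the same Liebeck--Saxl input, $H$ is intransitive or imprimitive on the $n$ points. Using the non-triviality of $\Gamma$ together with an analysis of the $(H, H)$-double cosets, I would then reduce $D$ to a single double coset $HgH$, giving $\Gamma \cong \B(G, H, H, HgH)$.

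To obtain the semisymmetry criteria, I distinguish based on whether $(G, H, g)$ satisfies Theorem~\ref{THM001}\eqref{THM001.3}. If yes, case~(a) applies; here Theorem~\ref{THM001}\eqref{THM001.1} is automatic since $\Soc(G) = \Al_n$, so Theorem~\ref{THM001} gives that $\Gamma$ is semisymmetric if and only if $HgH \neq Hg^{-1}H$. If no, then $G = \Al_n$ and $H$ is imprimitive (since for $G = \Sy_n$ the factorization in~\eqref{THM001.3} is trivial, and for $G = \Al_n$ with intransitive $H$ the stabilizer $K \cap K^g$ always contains odd elements), and Proposition~\ref{CON001} applies to yield case~(b) with the criterion $Hg^{-1}H \neq Hg^{\iota}H$ for every $\iota \in \N_{\Sy_n}(H) \setminus H$. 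The main obstacle I foresee is the reduction of $D$ to a single double coset $HgH$---equivalently, $G$-edge-transitivity of $\Gamma$---because $G$-biprimitivity does not automatically force this, and the argument must exploit specific properties of the $(H, H)$-double cosets for intransitive or imprimitive $H$ of odd index.
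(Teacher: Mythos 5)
Your structural reduction follows the paper's own: the Liebeck--Saxl classification of primitive groups of odd degree forces $L$ and $R$ to be intransitive or imprimitive on the $n$ points, the conjugacy lemmas (Lemmas~\ref{PROP001} and~\ref{PROP002}, i.e.\ case~\eqref{THM002.1} of Proposition~\ref{THM002}) let one take $H:=L=R$, and one then splits according to whether Theorem~\ref{THM001}\eqref{THM001.3} holds. The genuine gap is your claim that condition~\eqref{THM001.1} of Theorem~\ref{THM001} ``is automatic since $\Soc(G)=\Al_n$''. That condition concerns overgroups of $G$ inside the symmetric group of degree $d=|G\,{:}\,H|$ (the size of each part), not of degree $n$, and it does fail: for $n=12$ and $H=(\Sy_4\times\Sy_8)\cap G$ one has $d=495$, and $\Om_{10}^-(2)$ is a primitive overgroup of $G$ on the $495$ points whose socle is neither $\Al_{12}$ nor $\Al_{495}$. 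The same defect affects your appeal to Proposition~\ref{CON001}, whose hypothesis likewise demands that every overgroup of $G$ on $[G\,{:}\,H]$ have socle $G$ or $\Al_d$. The paper devotes the entire second half of its proof to exactly this exceptional case: it shows $\Om_{10}^-(2)\nleq A^+$ by comparing suborbits via~\cite[Theorem~1]{LPS2002}, concludes that $\Soc(A^+)=\Al_n$ or $\Al_d$ nonetheless, and then invokes the relaxed forms of Theorem~\ref{THM001} and Proposition~\ref{CON001} recorded in Remarks~\ref{RMK002} and~\ref{RMK003}. Without this step, the ``if'' directions of both semisymmetry criteria are unproved when $d=495$.

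A smaller point: the obstacle you flag at the end --- that $G$-biprimitivity alone does not force $D$ to be a single double coset --- is a legitimate concern, but the paper does not address it either; it simply writes $\Gamma\cong\B(G,L,R,RgL)$ citing Lemma~\ref{LEM003}, in effect restricting attention to $G$-edge-transitive graphs. So this is not a point on which your attempt falls short of the paper's argument; the overgroup condition is.
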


%\begin{problem}
%Let $\Gamma=\B(G,H,H,D)$ be a $G$-biprimitive semisymmetric graph. Is it true that for each $M\leq\Aut(\Gamma)$, the vertex stabilizers of two parts in $M$ are conjugate in $M$?
%\end{problem}
The subsequent sections of this paper unfold as follows. In the next section, we begin with the concept of bi-coset graphs and some preliminary results. After that, we prove Theorem~\ref{THM001} and make use of it to construct two infinity families of biprimitive semisymmetric graphs. Section~\ref{SEC5} is dedicated to proving Proposition~\ref{THM004}, which enables us to establish Proposition~\ref{THM002} in Section~\ref{SEC3}. In the final section, we classify biprimitive semisymmetric graphs of $\Al_n$ and $\Sy_n$ by demonstrating Theorems~\ref{THM003} and Proposition~\ref{CORO003}, and an example of such a graph for Proposition~\ref{CON001} is also presented there.

\section{Proof and application of Theorem~\ref{THM001}}\label{SEC2}

For a graph $\Gamma$, we use $V(\Gamma)$ and $E(\Gamma)$ to denote its vertex set and edge set, respectively. For a group $G$, denote by $\Soc(G)$ the socle of $G$, that is, the product of the minimal normal subgroups of $G$. For a subgroup $H$ of $G$, let $[G\,{:}\,H]$ be the set of right cosets of $H$ in $G$. We use $\mathrm{D}_{2n}$ to denote the dihedral group of order $2n$.

In the following subsection, we introduce the group-theoretic construction of biprimitive graphs and give some fundamental properties of these graphs. We then focus on biprimitive semisymmetric graphs with conjugate stabilizers and establish Theorem~\ref{THM001} in Subsection~\ref{SUBSEC2.2}. In Subsection~\ref{SUBSEC2.3}, we apply Theorem~\ref{THM001} to construct two infinite families of biprimitive semisymmetric graphs.

\subsection{Group-theoretic construction of biprimitive graphs}\label{SUBSEC2.1}

Let $\Gamma$ be a bipartite graph, and let $G$ be a group of automorphisms of $\Gamma$ that preserves each part setwise. Then $\Gamma$ is said to be \emph{$G$-semitransitive} if $G$ acts transitively on both parts. In particular, a $G$-biprimitive graph is $G$-semitransitive. The following concept, as defined in~\cite{DX2000}, gives a group-theoretic construction of semitransitive graphs.

\begin{definition}\label{DEF001}
Let $G$ be a finite group, let $L$ and $R$ be subgroups of $G$, and let $D$ be a union of double cosets of $R$ and $L$ in $G$. A \emph{bi-coset graph} $\B(G,L,R,D)$ of $G$ (with respect to $L,R$ and $D$) is a bipartite graph with parts $[G\,{:}\,L]$ and $[G\,{:}\,R]$ whose edge set is $\{\{Lg,Rdg\}\mid g\in G,d\in D\}$.
\end{definition}

The following lemma gives some elementary properties of bi-coset graphs.

\begin{lemma}[{\cite[Lemmas~2.3~and~2.6]{DX2000}}]\label{LEM003}
Let $\Gamma=\B(G,L,R,D)$ be a bi-coset graph.
\begin{enumerate}[{\rm(a)}]
\item\label{LEM003.1} $\Gamma$ is regular if and only if $|L|=|R|$.
\item\label{LEM003.5} $\Gamma$ is connected if and only if $G=\langle D^{-1}D\rangle$.
\item\label{LEM003.2} $\Gamma$ is isomorphic to $\B(G,L^a,R^b,b^{-1}Da)$
    %$\bigcup_{d\in D}R^{b}(b^{-1}da)L^a$
    for each $a,b\in G$.
\item\label{LEM003.3} $G$ acts semitransitively on $\Gamma$ by right multiplication with kernel $\Core_G(L)\cap\Core_G(R)$. Under this action, $G$ is transitive on $E(\Gamma)$ if and only if $D=RgL$ is a single double coset.
\item\label{LEM003.4} If $L=R$ and $D=D^{-1}$, then $\Gamma$ is vertex-transitive.
\end{enumerate}
\end{lemma}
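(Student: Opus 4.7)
The plan is to verify each of the five clauses by direct coset manipulations, using throughout that $D$ is a union of $(R,L)$-double cosets, equivalently $RD=D=DL$.

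For (a), I would compute the valency on each side: the neighbors of $Lg\in[G\,{:}\,L]$ are $\{Rdg:d\in D\}$, and because $D$ is a union of right cosets of $R$ this set has cardinality $|D|/|R|$; symmetrically, a vertex in $[G\,{:}\,R]$ has valency $|D|/|L|$, so regularity is equivalent to $|L|=|R|$. For (b), starting from the vertex $L$, a two-step walk reaches $Lx$ with $x\in D^{-1}D$; iterating, the connected component of $L$ in $[G\,{:}\,L]$ is $\{Lx:x\in\langle D^{-1}D\rangle\}$, and since $(d)^{-1}(dl)=l\in D^{-1}D$ for every $d\in D$ and $l\in L$ we have $L\subseteq\langle D^{-1}D\rangle$, so connectedness is equivalent to $G=\langle D^{-1}D\rangle$. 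For (c), the map $Lg\mapsto L^a(a^{-1}g)$, $Rg\mapsto R^b(b^{-1}g)$ is a well-defined bijection on vertex sets sending $\{Lg,Rdg\}$ to $\{L^a(a^{-1}g),R^b(b^{-1}da)(a^{-1}g)\}$, an edge of $\B(G,L^a,R^b,b^{-1}Da)$ since $b^{-1}da\in b^{-1}Da$.

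For (d), right multiplication by $x\in G$ preserves each part and is transitive on it, and $x$ fixes every vertex iff $gxg^{-1}\in L$ and $gxg^{-1}\in R$ for all $g\in G$, so the kernel equals $\Core_G(L)\cap\Core_G(R)$. For edge-transitivity, the orbit of $\{L,Rd\}$ under right multiplication is $\{\{Lx,Rdx\}:x\in G\}$, so $\{L,Rd\}$ and $\{L,Rd'\}$ lie in the same orbit iff $Rdl=Rd'$ for some $l\in L$, iff $d,d'$ belong to a common double coset $RgL$; hence $G$ is edge-transitive iff $D$ is a single double coset. For (e), tag the two parts as $V_i=[G\,{:}\,L]\times\{i\}$ ($i=1,2$) so that edges are $\{(Lg,1),(Ldg,2)\}$ with $d\in D$, and define $\sigma$ by $(Lg,i)\mapsto(Lg,3-i)$; then $\sigma$ sends such an edge to $\{(Lg,2),(Ldg,1)\}$, which with $h=dg$ and $d'=d^{-1}$ reads $\{(Ld'h,2),(Lh,1)\}$, an edge of $\Gamma$ because $D=D^{-1}$ together with $LD=D$ gives $d'\in Ld^{-1}\subseteq D$. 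Thus $\sigma\in\Aut(\Gamma)$ swaps the two parts and, combined with (d), yields vertex-transitivity.

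The only subtle step is (e): the seemingly natural map $Lg\mapsto Lg^{-1}$ is not well-defined on cosets when $L$ is not normal, so the swap must instead toggle the part-tag while preserving the coset, and the hypothesis $D=D^{-1}$ is exactly what ensures that this swap preserves edges. All remaining parts are routine unpacking of the bi-coset definition once the identity $RD=D=DL$ is in hand.
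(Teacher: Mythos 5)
The paper offers no proof of this lemma; it is quoted from Du and Xu \cite{DX2000}, so there is nothing internal to compare against. Your direct coset-manipulation verification is correct throughout and is essentially the standard argument from that reference: the valency count $|D|/|R|$ versus $|D|/|L|$ for (a), the observation that $1\in D^{-1}D$ and $L\subseteq D^{-1}D$ so that the component of $L$ is $\{Lx\mid x\in\langle D^{-1}D\rangle\}$ for (b), the conjugation map for (c), the kernel and double-coset orbit computations for (d), and, for (e), the tag-swapping involution $(Lg,i)\mapsto(Lg,3-i)$, whose edge-preservation is exactly where $D=D^{-1}$ enters. Your closing remark that $Lg\mapsto Lg^{-1}$ is not well defined, so the swap must fix the coset and toggle the part, correctly identifies the one genuinely delicate point.
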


Let $\Gamma$ be a $G$-semitransitive graph with biparts $U$ and $W$. Take any points $u\in U$ and $w\in W$, and set $D=\{g\in G\mid \{u,w^g\}\in E(\Gamma)\}$. It is not difficult to verify (see~\cite[Lemma~2.4]{DX2000} for a proof) that $\Gamma$ is isomorphic to the bi-coset graph $\B(G,G_u,G_w,D)$. According to this and Lemma~\ref{LEM003}\eqref{LEM003.3}, the $G$-semitransitive graphs are in a one-to-one correspondence with bi-coset graphs of $G$. Note also that a bi-coset graph $\B(G,L,R,D)$ is $G$-biprimitive if and only if both $L$ and $R$ are maximal in $G$. If $\Gamma$ is $G$-biprimitive, then the kernel of $G$ on $U$ (or $W$) is transitive on $W$ (or $U$), and so we obtain the next lemma.

\begin{lemma}\label{LEM005}
Let $\Gamma$ be a $G$-biprimitive graph that is neither a complete bipartite graph nor an empty graph. Then $G$ acts faithfully on both of the two parts.
\end{lemma}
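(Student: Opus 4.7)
The plan is to show that a non-trivial kernel of either action would force $\Gamma$ to be empty or complete bipartite. Write $U$ and $W$ for the two parts of $\Gamma$, and let $K_U, K_W \trianglelefteq G$ denote the kernels of the induced actions of $G$ on $U$ and $W$, respectively. The goal is to establish $K_U = K_W = 1$.

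The key ingredient is the classical fact that a non-trivial normal subgroup of a primitive permutation group is transitive, which follows immediately from the observation that its orbits form a system of blocks. Applied to $K_U$ acting on the $G$-primitive set $W$, this yields two cases. If $K_U$ acts trivially on $W$, then $K_U$ fixes every vertex of $V(\Gamma) = U \cup W$, whence $K_U = 1$. If instead $K_U$ acts transitively on $W$, then for any fixed $u \in U$ the neighbourhood $\Gamma(u)$ is $K_U$-invariant, so transitivity of $K_U$ on $W$ forces $\Gamma(u) \in \{\emptyset, W\}$; and since $G$ acts transitively on $U$ while preserving $W$ setwise, this dichotomy is independent of the choice of $u$, so $\Gamma$ is either the empty graph or the complete bipartite graph on $(U,W)$, contradicting the hypothesis. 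Hence $K_U = 1$, and the symmetric argument applied to $K_W$ acting on $U$ gives $K_W = 1$.

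I do not foresee a substantive obstacle: the argument is a short, direct unpacking of primitivity together with the $G$-invariance of neighbourhoods. The only auxiliary point worth flagging is the block-system fact above, which is entirely standard. No deeper structural input on $G$ or $\Gamma$ seems necessary.
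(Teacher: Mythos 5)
Your proof is correct and follows exactly the argument the paper intends: the paper does not give a separate proof of this lemma but justifies it in the single sentence preceding it (``the kernel of $G$ on $U$ (or $W$) is transitive on $W$ (or $U$)''), and your write-up is precisely the fleshed-out version of that sketch, using normality of the kernel plus primitivity to get the trivial-or-transitive dichotomy and then the $K_U$-invariance of neighbourhoods.
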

%\begin{proof}
%Let $U$ and $W$ be the two parts of $\Gamma$. Suppose that $G$ acts unfaithfully with kernel $K$ on one of $U$ or $W$, say, $U$. Then since $K\trianglelefteq G$ and $G$ is primitive on $W$, we deduce that $K$ is transitive on $W$. Hence, $\Gamma$ is a complete bipartite graph or an empty graph, contradicting our assumption.
%\end{proof}

We close this subsection with another simple lemma on biprimitive graphs.

\begin{lemma}\label{CORO002}
Let $\Gamma$ be a $G$-biprimitive graph with each part of size $n\geq5$ such that $\Gamma$ is not a complete bipartite graph, a perfect matching, or their bipartite complements. Then $\Soc(G)$ is not isomorphic to $\Al_n$.
\end{lemma}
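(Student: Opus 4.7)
The plan is to use the classification of primitive permutation representations of almost simple groups with socle $\Al_n$: for $n \neq 6$ the only such representation of degree $n$ is the natural one, and for $n=6$ there is a second family obtained by twisting via the exceptional outer automorphism of $\Sy_6$. Suppose $\Gamma$ is $G$-biprimitive with parts $U$ and $W$ of size $n \geq 5$ and $\Soc(G) \cong \Al_n$. After excluding the empty and complete bipartite cases, Lemma~\ref{LEM005} yields that $G$ acts faithfully on both $U$ and $W$, so each bipart carries a faithful primitive action of $G$ of degree $n$.

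For $n \neq 6$, both actions of $G$ are equivalent to the natural action, so we may identify $U$ and $W$ with a common $n$-set $\Omega$ via $G$-equivariant bijections. The edge set $E(\Gamma)$ then becomes a $G$-invariant subset of $\Omega \times \Omega$. Since $G \geq \Al_n$ is $2$-transitive on $\Omega$ for $n \geq 4$, the only $G$-orbits on $\Omega \times \Omega$ are the diagonal $\{(x,x) : x \in \Omega\}$ and its complement. The four resulting possibilities for $E(\Gamma)$ are exactly the empty graph, a perfect matching, the bipartite complement of a perfect matching, and the complete bipartite graph, all of which are excluded by hypothesis.

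The remaining case $n = 6$ requires a careful handling of the exceptional outer automorphism. If both actions of $G$ on $U$ and $W$ are equivalent, the previous argument applies verbatim. If instead they are inequivalent (related by the outer automorphism), then a point stabilizer in the action on $U$, which is an intransitive $\Sy_5$ or $\Al_5$ on $U$, becomes a transitive subgroup on $W$ under the twisted action --- this is the classical ``natural versus transitive $\Sy_5$'' dichotomy inside $\Sy_6$, equivalently realised via $\Al_6 \cong \PSL_2(9)$. It follows that $G$ acts transitively on $U \times W$, so the only $G$-invariant bipartite graphs are empty or complete bipartite, again excluded.

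The argument is essentially organisational; the only step that requires care is the $n=6$ case, but the transitivity of a natural point stabiliser under the twisted action is standard and easily verified from the well-known subgroup structure of $\Sy_6$.
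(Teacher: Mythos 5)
Your proof is correct and follows essentially the same route as the paper's: both reduce to the classification of degree-$n$ primitive actions of groups with socle $\Al_n$ (the paper cites \cite[Theorem~5.2B]{DM1996}), dispose of the exceptional $n=6$ twisted case via the factorization $G=G_uG_w$ forcing $\Gamma$ to be empty or complete bipartite, and use $2$-transitivity in the generic case to restrict the edge set to the four excluded graphs. The only cosmetic difference is that the paper argues via the orbits $\{v\}$ and $W\setminus\{v\}$ of a point stabilizer on the opposite part rather than identifying the two parts and taking $G$-orbits on $\Omega\times\Omega$.
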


\begin{proof}
Let $U$ and $W$ be the two parts of $\Gamma$, and take any vertices $u$ and $w$ from $U$ and $W$, respectively. By the assumption and Lemma~\ref{LEM005}, $G$ acts faithfully on both $U$ and $W$. Suppose on the contrary that $G$ has socle $\Al_n$. Since $G$ is primitive on $U$ (and also $W$), it follows that $G$ is isomorphic to $\Al_n$ or $\Sy_n$ with $|G\,{:}\,G_u|=|G\,{:}\,G_w|=n$. If $n=6$ and $\{G_u,G_w\}=\{\Sy_5\cap G,\PGL_2(5)\cap G\}$, then it follows from $G=G_uG_w$ that $G_u$ acts transitively on $W$, contradicting that $\Gamma$ is not a complete bipartite or its bipartite complements. Hence, we derive from~\cite[Theorem~5.2B]{DM1996} that
%, both $G_u$ and $G_w$ are the point stabilizers in $G$ under the natural action of $G$ on $n$ points. Hence,
$G_u=(G_w)^g$ for some $g\in G$, and $G$ acts $2$-transitively on both $U$ and $W$. Let $v=w^g$. Then $G_u$ fixes $u$ and $v$ while acts transitively on $W\setminus\{v\}$. Therefore, the set of vertices adjacent to $u$ is $W$, $\{v\}$, $\emptyset$ or $W\setminus\{v\}$. By the transitivity of $G$ on $U$ and $W$, we conclude that $\Gamma$ is a complete bipartite graph, a perfect matching or their bipartite complements, a contradiction.
\end{proof}

\subsection{Proof of Theorem~\ref{THM001}}\label{SUBSEC2.2}

In this subsection, we prove Theorem~\ref{THM001}, which focuses on the biprimitive semisymmetric graphs with conjugate stabilizers. We first establish three lemmas.

\begin{lemma}\label{LEM001}
Let $\B(G,L,R,RgL)$ be a bi-coset graph with $L$ and $R$ maximal in $G$, let $M$ be an overgroup of $G$, and let $X$ and $Y$ be maximal subgroups of $M$ containing $L$ and $R$, respectively, such that $G(X\cap Y^g)=M$ and neither $X$ nor $Y$ contains $G$. Then the following hold:
\begin{enumerate}[{\rm(a)}]
\item\label{LEM001.1} $\B(G,L,R,RgL)\cong\B(M,X,Y,YgX)$.
%\item\label{LEM001.2} $X,Y\leq\N_M(G)$;
\item\label{LEM001.3} Suppose $L=R$ and $X=Y$. Then $RgL=Rg^{-1}L$ if and only if $YgX=Yg^{-1}X$.
\end{enumerate}
\end{lemma}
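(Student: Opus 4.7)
The plan is to interpret both bi-coset graphs via the group actions on pairs of cosets induced by right multiplication. Edges of $\B(G,L,R,RgL)$ correspond to the single $G$-orbit of $(L,Rg)$ on $[G\,{:}\,L]\times[G\,{:}\,R]$, since $\{Lh_1,Rh_2\}$ is an edge if and only if $h_2h_1^{-1}\in RgL$; the analogous statement holds for $\B(M,X,Y,YgX)$ under the $M$-action.

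For part~(a), I would first show that $Lh\mapsto Xh$ and $Rh\mapsto Yh$ are $G$-equivariant bijections $[G\,{:}\,L]\to[M\,{:}\,X]$ and $[G\,{:}\,R]\to[M\,{:}\,Y]$. Well-definedness is immediate from $L\le X$ and $R\le Y$. For injectivity, $Xh=Xh'$ with $h,h'\in G$ forces $hh'^{-1}\in X\cap G$; since $L$ is maximal in $G$, $L\le X\cap G$, and $G\not\le X$ by hypothesis, we must have $X\cap G=L$, whence $Lh=Lh'$ (similarly $Y\cap G=R$). Surjectivity follows from $GX=M$, a consequence of $M=G(X\cap Y^g)\le GX$. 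The key orbit-theoretic observation is then that the $G$-orbit of $(X,Yg)$ on $[M\,{:}\,X]\times[M\,{:}\,Y]$ coincides with its full $M$-orbit precisely when $G$ multiplied by the $M$-stabilizer of $(X,Yg)$ equals $M$; since that stabilizer is $X\cap Y^g$, the hypothesis $G(X\cap Y^g)=M$ is exactly what is required. Transferring edges through the bijections yields the claimed graph isomorphism, and comparing edges in $G$ also supplies the useful identity $RgL=YgX\cap G$.

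Part~(b) then follows quickly from this identity, specialised to $L=R$ and $X=Y$. If $XgX=Xg^{-1}X$, then $g^{-1}\in XgX\cap G=LgL$, so $Lg^{-1}L=LgL$ since double cosets in $G$ are disjoint or equal. Conversely, if $LgL=Lg^{-1}L$, then $g^{-1}\in LgL\subseteq XgX$, forcing $XgX=Xg^{-1}X$. The main obstacle is the orbit-comparison step in part~(a), where the hypothesis $G(X\cap Y^g)=M$ has to be recognised as the exact condition for a $G$-orbit to fill an $M$-orbit; the remaining steps amount to routine coset manipulations.
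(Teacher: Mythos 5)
Your proposal is correct and follows essentially the same route as the paper: both identify $L=G\cap X$ and $R=G\cap Y$ from maximality of $L,R$ and $G\nleq X,Y$, use the coset maps $Lh\mapsto Xh$ and $Rh\mapsto Yh$, and reduce everything to the identity $RgL=(YgX)\cap G$, with the hypothesis $G(X\cap Y^g)=M$ entering at exactly the same point. The only difference is cosmetic: the paper verifies $|RgL|=|(YgX)\cap G|$ by counting right cosets of $Y$ inside $YgX$, whereas you obtain the same conclusion by comparing the $G$-orbit and the $M$-orbit of the pair $(X,Yg)$ via orbit--stabilizer, and your part (b) is a minor rearrangement of the paper's argument using the same identity.
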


\begin{proof}
The conclusions are obviously true for $G=M$. So assume $G<M$ in the following. Note that $L\leq G\cap X<G$ and $R\leq G\cap Y<G$. Then since both $L$ and $R$ are maximal in $G$, we conclude that $L=G\cap X$ and $R=G\cap Y$.
%Also note that~\eqref{LEM001.3} implies $X\cap Y^g\nleq G$.
We claim that $RgL=(YgX)\cap G$. Clearly, $RgL\subseteq(YgX)\cap G$, and it suffices to show that they have the same size. Let $YgX$ contain exactly $r$ right cosets $Yg_1,Yg_2,\ldots,Yg_r$ of $Y$.
Then
\[
r=\frac{|X|}{|X\cap Y^g|}=\frac{|GX||G\cap X|}{|G||X\cap Y^g|}.
\]
Since $GX\supseteq G(X\cap Y^g)=M$, we have $GX=G(X\cap Y^g)$. Hence, the above equation yields
\begin{equation}\label{EQ001}
r=\frac{|G(X\cap Y^g)||G\cap X|}{|G||X\cap Y^g|}=\frac{|G\cap X|}{|G\cap(X\cap Y^g)|}=\frac{|G\cap X|}{|(G\cap X)\cap (G\cap Y^g)|}=\frac{|L|}{|L\cap R^g|}.
\end{equation}
Since $GY^g\supseteq G(X\cap Y^g)=M$, we have $GY^g=M$, and so $M=YG$. Thus, we may take $g_i\in G$ for each $i\in\{1,\ldots,r\}$. Consequently, it follows from~\eqref{EQ001} that
\[
|(YgX)\cap G|=\left|\bigcup\limits_{i=1}^{r}(Yg_i\cap G)\right|=\left|\bigcup\limits_{i=1}^{r}(Y\cap G)g_i\right|=\left|\bigcup\limits_{i=1}^{r}Rg_i\right|=r|R|=\frac{|R||L|}{|L\cap R^g|}=|RgL|,
\]
proving the claim.

Write $\Gamma=\B(G,L,R,RgL)$ and $\Sigma=\B(M,X,Y,YgX)$.
%To prove~\eqref{LEM001.6}, we may use $Lg$ and $Rg$ to denote different vertices in the two parts of $V(\Gamma)$,  where $L=R=H$, $g\in G$ and $\{Lg,Rg'\}\in E(\Gamma)$ if and only if $g'g^{-1}\in LxR$. Similarly, we use $\overline{L}m$ and $\overline{R}m$ to denote the respective parts of $V(\overline{\Gamma})$, where $\overline{L}=\overline{R}=K$, $m\in M$ and $\{\overline{L}m,\overline{R}m'\}\in E(\overline{\Gamma})$ if and only if $m'm^{-1}\in \overline{L}x\overline{R}$.
Define a map $\varphi\colon V(\Gamma)\to V(\Sigma)$ by letting $\varphi$ send $La$ to $Xa$ and $Ra$ to $Ya$ for all $a\in G$. Making use of $L=G\cap X$, $R=G\cap Y$ and $M=XG=YG$, one may directly verify that $\varphi$ is well-defined and bijective. Moreover, for $a,b\in G$, the above claim implies that
\[
\{La,Rb\}\in E(\Gamma) \Leftrightarrow
ba^{-1}\in RgL=(YgX)\cap G \Leftrightarrow
ba^{-1}\in YgX \Leftrightarrow
\{Xa,Yb\}\in E(\Sigma).
\]
Therefore, $\varphi$ is a graph isomorphism from $\Gamma$ to $\Sigma$, which confirms statement~\eqref{LEM001.1}.

Now suppose $L=R$ and $X=Y$. If $RgL=Rg^{-1}L$, then
\[
YgX=Y(RgL)X=Y(Rg^{-1}L)X=Yg^{-1}X.
\]
Conversely, if $YgX=Yg^{-1}X$, then we conclude from $RgL=(YgX)\cap G$ that
\[
RgL=(YgX)\cap G=(Yg^{-1}X)\cap G=((YgX)\cap G)^{-1}=(RgL)^{-1}=Rg^{-1}L.
\]
This proves statement~\eqref{LEM001.3}.
\end{proof}

\begin{lemma}\label{LEM004}
Let $\Gamma=\B(G,H,H,D)$ be a bi-coset graph with $H$ maximal, nontrivial and core-free in $G$, and let $R$ be the right multiplication action of $G$ on $V(\Gamma)$. If there exists an automorphism of $\Gamma$ that interchanges the two parts of $\Gamma$ and centralizes $R(G)$, then $D=D^{-1}$.
\end{lemma}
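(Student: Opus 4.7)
The plan is to use that $\sigma$ centralizes the right-multiplication action $R(G)$ to pin down $\sigma$ on every vertex once its value at the base coset in each part is known, and then to translate the edge-preserving property of $\sigma$ into the conclusion $D=D^{-1}$.

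Denote the two parts of $\Gamma$ by $U$ and $W$, both copies of $[G\,{:}\,H]$, and write $H_U$ and $H_W$ for the coset $H$ viewed in each part. Since $\sigma$ interchanges the parts, $\sigma(H_U)=(Hx)_W$ for some $x\in G$. For every $h\in H$, the permutation $R(h)$ fixes $H_U$ because $Hh=H$, and since $\sigma$ commutes with $R(h)$, it must also fix $\sigma(H_U)=(Hx)_W$. This forces $Hxh=Hx$ for every $h\in H$, i.e., $x\in\N_G(H)$.

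The main step is to deduce $\N_G(H)=H$, and this is where all three hypotheses on $H$ are used. Since $H$ is maximal in $G$, either $\N_G(H)=H$ or $\N_G(H)=G$; the second alternative would make $H$ a normal subgroup of $G$, forcing $H\leq\Core_G(H)=1$ and contradicting that $H$ is nontrivial. Therefore $x\in H$, so $\sigma(H_U)=H_W$, and the parallel argument applied to $\sigma(H_W)$ gives $\sigma(H_W)=H_U$.

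Now the transitivity of $R(G)$ on each part together with the commutation with $\sigma$ yields $\sigma((Ha)_U)=\sigma(H_U\cdot a)=H_W\cdot a=(Ha)_W$ and similarly $\sigma((Hb)_W)=(Hb)_U$ for all $a,b\in G$. Unpacking Definition~\ref{DEF001}, the pair $\{(Ha)_U,(Hb)_W\}$ is an edge of $\Gamma$ exactly when $ba^{-1}\in D$; applying $\sigma$ to such an edge produces the edge $\{(Ha)_W,(Hb)_U\}$, which by the same criterion requires $ab^{-1}\in D$. Hence $D^{-1}\subseteq D$, and equality follows from $|D^{-1}|=|D|$. The only nontrivial step is the normalizer computation $\N_G(H)=H$; everything else is bookkeeping once $\sigma$ is forced to act on $U\cup W$ by the obvious identification of the two copies of $[G\,{:}\,H]$.
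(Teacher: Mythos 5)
Your proposal is correct and follows essentially the same route as the paper's proof: both use the fact that an automorphism centralizing the transitive group $R(G)$ is determined by its value at one vertex per part, reduce to the normalizer computation $\N_G(H)=H$ via maximality and core-freeness, conclude that the automorphism sends $Ha$ in one part to $Ha$ in the other, and then read off $D=D^{-1}$ from edge preservation. The only cosmetic difference is that you pin down the image of the base coset first and propagate by transitivity, whereas the paper runs the normalizer argument directly at an arbitrary coset $Ha$; the content is identical.
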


\begin{proof}
Let $\iota$ be the automorphism of $\Gamma$ described in the lemma, and let $U$ and $W$ be the biparts of $\Gamma$. To distinguish the vertices of two parts, we write the vertices from $U$ as $Hg$ while the vertices from $W$ as $Kg$, where $K=H$ and $g\in G$. Take an arbitrary $a\in G$, and let $(Ha)^\iota=Kb$. As $\iota$ centralizes $R(G)$, we deduce that for each $h\in H$,
\[
Kb=(Ha)^\iota=(Ha)^{R(a^{-1}ha)\iota}=(Ha)^{\iota R(a^{-1}ha)}=(Kb)^{R(a^{-1}ha)}=Kba^{-1}ha.
\]
This implies that $(ba^{-1})h(ab^{-1})\in K=H$, and so $ab^{-1}\in\N_G(H)$. Since $H$ is maximal and core-free in $G$, it follows that $\N_G(H)=H$ and hence $(Ha)^\iota=Kb=Ka$. With a similar argument we have $(Ka)^\iota=Ha$ for each $a\in G$. Considering the action of $\iota$, we conclude that
\[
a\in D\Leftrightarrow \{H,Ka\}\in E(\Gamma)\Leftrightarrow \{Ha,K\}\in E(\Gamma)\Leftrightarrow a^{-1}\in D,
\]
which completes the proof.
\end{proof}

\begin{lemma}\label{LEM002}
Let $M$ be an almost simple group with socle $T$, and let $A=M.2$. Then $A$ is either almost simple with socle $T$ or isomorphic to $M\times2$.
\end{lemma}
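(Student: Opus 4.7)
The plan is to analyze how $A$ sits inside $\Aut(T)$ via the conjugation action on $T$, and split according to whether the nontrivial element outside $M$ centralizes $T$.

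First I would observe that $T=\Soc(M)$ is characteristic in $M$ (being the socle) and $M\trianglelefteq A$, so $T\trianglelefteq A$. Next, let $C=\C_A(T)$. Since $M$ is almost simple, $\C_M(T)=1$, which gives $C\cap M=1$. Because $[A:M]=2$, this forces $|C|\in\{1,2\}$. These two cases will correspond exactly to the two conclusions in the statement.

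In the first case, $C=1$, the conjugation action embeds $A$ into $\Aut(T)$. To conclude $A$ is almost simple with socle $T$, I would verify that $T$ is the unique minimal normal subgroup of $A$: if $N\trianglelefteq A$ is minimal and distinct from $T$, then minimality gives $N\cap T=1$, hence $[N,T]=1$, so $N\leq C=1$, a contradiction.

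In the second case, $|C|=2$, pick $\sigma\in C\setminus\{1\}$, so $\sigma\notin M$ (since $C\cap M=1$) and $A=M\langle\sigma\rangle$. The main obstacle is to upgrade ``$\sigma$ centralizes $T$'' to ``$\sigma$ centralizes $M$''. The key is that $M$ acts faithfully on $T$ by conjugation (as $\C_M(T)=1$). For any $m\in M$ and $t\in T$,
\[
(\sigma^{-1}m\sigma)\,t\,(\sigma^{-1}m\sigma)^{-1}=\sigma^{-1}\bigl(m\,(\sigma t\sigma^{-1})\,m^{-1}\bigr)\sigma=\sigma^{-1}(mtm^{-1})\sigma=mtm^{-1},
\]
the last equality using that $mtm^{-1}\in T$ is centralized by $\sigma$. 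Hence $m^\sigma$ and $m$ induce the same inner automorphism of $T$, and faithfulness forces $m^\sigma=m$. Therefore $\sigma$ centralizes $M$, and $A=M\times\langle\sigma\rangle\cong M\times 2$, completing the proof.
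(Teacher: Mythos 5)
Your proof is correct and takes essentially the same approach as the paper: both arguments hinge on $C=\C_A(T)$ being normalized alongside $T$, meeting $M$ trivially so that $|C|\in\{1,2\}$, with the two cases giving the two conclusions. The only difference is cosmetic --- you establish $[M,C]=1$ by an explicit computation with the faithful conjugation action (and verify the socle claim via uniqueness of the minimal normal subgroup), where the paper simply writes $A=M\times\C_A(T)$; both are valid.
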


\begin{proof}
Note that $T$ is normal in $A$, as $T$ is characteristic in $M$ while $M$ is normal in $A$. Since $\C_A(T)\cap G=\C_M(T)=1$, we have $\C_A(T)=1$ or $2$. If $\C_A(T)=1$, then
\[
T\leq A\cong A/\C_A(T)\lesssim\Aut(T),
\]
and so $A$ is almost simple with socle $T$. If $\C_A(T)=2$, then $A=M\times\C_A(T)=M\times2$.
\end{proof}

We are now ready to prove Theorem~\ref{THM001}.

\begin{proof}[Proof of Theorem~$\ref{THM001}$]
Let $\Gamma=\B(G,H,H,HgH)$ with biparts $U$ and $W$, let $A=\Aut(\Gamma)$, and let $A^+$ be the subgroup of $A$ fixing setwise $U$ and $W$. Since $H$ is maximal in $G$, the graph $\Gamma$ is $G$-biprimitive and so $(A^+)$-biprimitive. Since $HgH\neq Hg^{-1}H$, the graph $\Gamma$ is not a complete bipartite graph, a perfect matching, or their bipartite complements. Then we derive from Lemmas~\ref{LEM005} and~\ref{CORO002} that $A^+$ acts faithfully on $U$ and $W$, respectively, and that $\Soc(A^+)\neq\Al_n$. Write $M=\Aut(T)$. Since $A^+$ is an overgroup of $G$, it follows from condition~\eqref{THM001.1} that $\Soc(A^{+})=T$ and hence $A^+\leq M$.

Take a core-free maximal subgroup $K$ of $M$ as in~\eqref{THM001.3}. Then $K\geq H$ and $M=G(K\cap K^{g})$, that is, the conditions of Lemma~\ref{LEM001} hold with $L=R=H$ and $X=Y=K$. Thus, $\B(G,H,H,HxH)\cong\B(M,K,K,KgK)$, and in particular, $M\leq A^{+}$.

So far we have shown $M=A^{+}$. Suppose for a contradiction that $\Gamma$ is vertex-transitive, namely, $A=A^{+}.2$. Then by Lemma~\ref{LEM002}, either $A$ is almost simple with socle $T$, or $A=M\times2$. The former case is not possible as it would imply $A\leq\Aut(T)=M=A^+$. Therefore, $A=\Aut(T)\times2=M\times2$, and so Lemma~\ref{LEM004} implies $KgK=Kg^{-1}K$. We then conclude by Lemma~\ref{LEM001} that $HgH=Hg^{-1}H$, contradicting condition~\eqref{THM001.2}. This completes the proof.
\end{proof}

\begin{remark}\label{RMK002}
In the above proof of Theorem~\ref{THM001}, it can be observed that the theorem still holds if condition~\eqref{THM001.1} is replaced by ``$\Soc(A^+)=T$ or $\Al_n$'', where $A^+$ is the automorphism group of $\B(G,H,H,HgH)$ that preserves each part setwise. This observation will be used in the proof of Proposition~\ref{CORO003} to address biprimitive graphs of order $990$.
\end{remark}

\subsection{Infinite families of biprimitive semisymmetric graphs}\label{SUBSEC2.3}

Applying Theorem~\ref{THM001}, one may construct many infinite families of biprimitive semisymmetric graphs. In this subsection, we present two infinite families of such graphs admitting alternating and symplectic groups.
%To avoid confusion, we write permutations as $(\alpha,\beta,\ldots)$ instead of $(\alpha\beta\cdots)$ in the following examples.

Let $v=\{V_1,\ldots,V_k\}$ and $w=\{W_1,\ldots,W_k\}$ be two partitions of $\{1,\ldots,n\}$, where each block $V_i$ and $W_j$ has the same size. A matrix $P=(|V_i\cap W_j|)_{k\times k}$ is called an \emph{intersection matrix} of $v$ and $w$. Note that $P$ might be different by changing the orderings within $\{V_1,\ldots,V_k\}$ and $\{W_1,\ldots,W_k\}$, but all the possibilities belong to the set
\[
\{X(|V_i\cap W_j|)_{k\times k}Y\mid\text{$X$ and $Y$ are $k\times k$ permutation matrices}\}.
\]
The following lemma characterizes suborbits of the imprimitive subgroups of symmetric groups via intersection matrices of partitions.

\begin{lemma}[{\cite[Lemma~2.9(a)]{YFX2023}}]\label{LEM012}
Let $v$ be a partition of $\{1,\ldots,n\}$, let $K$ be the stabilizer of $v$ in $\Sy_n$, let $g\in\Sy_n$, and let $P$ be an intersection matrix of $v$ and $v^g$. Then an element $h$ of $\Sy_n$ is in $KgK$ if and only if there exist $k\times k$ permutation matrices $X$ and $Y$ such that $XPY$ is an intersection matrix of $v$ and $v^h$.
\end{lemma}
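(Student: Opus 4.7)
The plan is to identify the double-coset $KgK$ with the $K$-orbit of $v^g$ on uniform partitions, and then to characterize this orbit combinatorially. To set up, I would first check that for $g, g' \in \Sy_n$ the partition equality $v^g = v^{g'}$ holds exactly when $g' g^{-1} \in K$, so that the map $g \mapsto v^g$ descends to a bijection between $K \backslash \Sy_n$ and the uniform partitions of $\{1,\ldots,n\}$ of the prescribed block structure. In particular, $h \in KgK$ if and only if $v^h = v^{gk}$ for some $k \in K$.

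For the forward direction, assume $h = k_1 g k_2$ with $k_1, k_2 \in K$. Because $v^{k_1} = v$ we get $v^h = (v^g)^{k_2}$, and letting $\pi, \tau \in \Sy_k$ be the permutations defined by $V_j^{k_1} = V_{\pi(j)}$ and $V_i^{k_2^{-1}} = V_{\tau(i)}$, a direct index chase gives
\[
|V_i \cap V_j^h| \;=\; \bigl|V_i \cap W_{\pi(j)}^{k_2}\bigr| \;=\; \bigl|V_{\tau(i)} \cap W_{\pi(j)}\bigr| \;=\; P_{\tau(i),\,\pi(j)},
\]
so the natural intersection matrix of $(v, v^h)$ equals $XPY$ for the permutation matrices $X, Y$ encoding $\tau$ and $\pi$, respectively.

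Conversely, suppose some intersection matrix of $v$ and $v^h$ has the form $XPY$. Unpacking the definition, this yields permutations $\tau, \sigma \in \Sy_k$ such that $|V_i \cap V_j^h| = |V_{\tau(i)} \cap W_{\sigma(j)}|$ for all $i,j$. Since corresponding sets have equal cardinality, I may pick arbitrary bijections $\phi_{ij}\colon V_{\tau(i)} \cap W_{\sigma(j)} \to V_i \cap V_j^h$, and let $k \in \Sy_n$ be their disjoint union. A block-wise count then yields $V_{\tau(i)}^k = V_i$ (so $k \in K$) and $W_{\sigma(j)}^k = V_j^h$, whence $v^{gk} = v^h$ as partitions. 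The reduction in the first paragraph then gives $h \in K(gk) \subseteq KgK$.

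I expect the main difficulty to be purely bookkeeping: keeping straight the various permutations $\pi, \sigma, \tau$ and the conventions for how the permutation matrices $X$ and $Y$ encode them (rows versus columns, inverses or not). Once a convention is fixed at the outset, the equalities above reduce to routine size-counting identities, and the piecewise construction of $k$ from the $\phi_{ij}$ is immediate.
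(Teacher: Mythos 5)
Your argument is correct and complete: the identification of $K\backslash\Sy_n$ with the orbit of $v$, the index chase in the forward direction, and the piecewise assembly of $k\in K$ from the bijections $\phi_{ij}$ in the converse all check out (in particular, the domains $V_{\tau(i)}\cap W_{\sigma(j)}$ and codomains $V_i\cap V_j^h$ each partition $\{1,\ldots,n\}$, so $k$ is a well-defined permutation, and $V_{\tau(i)}^k=V_i$ does put $k$ in $K$). Note that the paper itself gives no proof of this lemma --- it is quoted verbatim from \cite[Lemma~2.9(a)]{YFX2023} --- so there is nothing internal to compare against; your proof is the natural one and presumably matches the cited source.
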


We now give our first family of biprimitive graphs and show their semisymmetry using Theorem~\ref{THM001}.

\begin{example}\label{CON002}
Let $n=3m$ for some integer $m\geq8$, and let $v=\{V_1,V_2,V_3\}$, where $V_{i+1}=\{im+1,\cdots,im+m\}$ for $i\in\{0,1,2\}$, be a partition of $\{1,\ldots,n\}$. Let $G=\Al_n$ be the alternating group on $\{1,\ldots,n\}$, let $H$ be the stabilizer of the partition $v$ in $G$, and let
\[
g=(1,m+1)(2,m+2,2m+1)(3,2m+2,4,2m+3)\in G.
\]
Then the bi-coset graph $\B(G,H,H,HgH)$ is $G$-biprimitive semisymmetric.
%with the full automorphism group $\Sy_n$.
\end{example}

\begin{proof}[Proof of Example~$\ref{CON002}$]
We first prove $HgH\neq Hg^{-1}H$. Let $W_i=(V_i)^g$ for $i\in\{1,2,3\}$, let $w$ be the partition $\{W_1,W_2,W_3\}$ of $\{1,\ldots,n\}$, and let $P=(|V_i\cap W_j|)_{3\times3}$. It is straightforward to verify that
\[
P=
\begin{pmatrix}
m-4 & 1 & 3 \\
2 & m-2 & 0 \\
2 & 1 & m-3
\end{pmatrix}.
\]
Let $U_i=(V_i)^{g^{-1}}$ for $i\in\{1,2,3\}$, and let $Q=(|V_i\cap U_j|)_{3\times3}$. Then
\[
Q=P^\mathsf{T}=
\begin{pmatrix}
m-4 & 2 & 2 \\
1 & m-2 & 1 \\
3 & 0 & m-3
\end{pmatrix}.
\]
Observe in $P$ and $Q$ that none of the three diagonal entries $m-4$, $m-2$ and $m-3$ equals any of the non-diagonal entries. Thus, there are no permutation matrices $X$ and $Y$ such that $Q=XPY$. It follows from Lemma~\ref{LEM012} that $g^{-1}\notin HgH$ and hence $HgH\neq Hg^{-1}H$, as desired.

By examining the possible overgroups in~\cite[Tables~II--VI]{LPS1987} of the primitive group $G$ on $[G\,{:}\,H]$, we deduce that, except for the alternating and symmetric groups on $[G\,{:}\,H]$, each overgroup of $G$ has socle $\Al_n$.

So far we have verified that the triple $(G,H,g)$ satisfies conditions~\eqref{THM001.2} and~\eqref{THM001.1} of Theorem~\ref{THM001}. Now let $K$ be the stabilizer of the partition $v$ in $\Sy_n$. Then $H\leq K$, and the transposition $(m+4,m+5)$ is in $K\cap K^g$. Thus, $G(K\cap K^g)=\Sy_n$, and Theorem~\ref{THM001}\eqref{THM001.3} also holds for $(G,H,g)$. Moreover, since $H$ is maximal in $G$, the bi-coset graph $\B(G,H,H,HgH)$ is $G$-biprimitive. We then conclude by Theorem~\ref{THM001} that $\B(G,H,H,HgH)$ is semisymmetric.
\end{proof}

The second family is constructed based on the imprimitive subgroups of symplectic groups, and its construction captures the same essence as the graphs in Example~\ref{CON002}.

\begin{example}\label{CON003}
Let $V$ be a non-degenerate symplectic space of dimension $2mk$ over $\mathbb{F}_q$, where $m\geq8$, $k\geq3$ and $q$ is a prime power. Let $V=V_1\oplus\cdots\oplus V_k$ be a non-degenerate decomposition such that each $V_i$ has dimension $2m$, and let $\mathcal{D}=\{V_1\ldots,V_k\}$. For each $i\in\{1,\ldots,k\}$, take a symplectic basis $e_{i1},f_{i1},e_{i2},f_{i2},\ldots,e_{im},f_{im}$ of $V_i$. Suppose that $G=\PSp(V)$ and that $H$ is the stabilizer of $\mathcal{D}$ in $G$. Take $g\in G$ to be the image of
\[
a=(e_{11},e_{21})(f_{11},f_{21})(e_{12},e_{22},e_{31})(f_{12},f_{22},f_{31})
(e_{13},e_{32},e_{14},e_{33})(f_{13},f_{32},f_{14},f_{33})
\]
modulo scalars. Then the $G$-biprimitive bi-coset graph $\B(G,H,H,HgH)$ is semisymmetric.
\end{example}

\begin{proof}[Proof of Example~$\ref{CON003}$]
We first prove $HgH\neq Hg^{-1}H$. For a block matrix $B=(B_{ij})_{k\times k}$ with each $B_{ij}$ being a $2m\times2m$ matrix, define its rank matrix $R(B)=(\mathrm{rank}(B_{ij}))_{k\times k}$. Fix the basis $e_{11},f_{11},\ldots,e_{1m},f_{1m},\ldots\ldots,e_{k1},f_{k1},\ldots,e_{km},f_{km}$
for $V$, and view the elements of $\GSp(V)$ as the matrices with respect to this basis. Then direct computation shows that
\[
R(a)=
\begin{pmatrix}
2m-8 & 2 & 6 &  \\
4 & 2m-4 &  &  \\
4 & 2 & 2m-6 &  \\
& & & 2mI_{k-3}
\end{pmatrix}
\]
and $R(a^{-1})=R(a)^\mathsf{T}$. Let $N$ be the stabilizer of $\mathcal{D}$ in $\GSp(V)$, the conformal symplectic group. Then for each element $b\in NaN$, there exist permutation matrices $X$ and $Y$ such that $R(b)=XR(a)Y$. Moreover, let $Z$ be the center of $\GSp(V)$, and observe that $R(bz)=R(b)$ for each $b\in NaN$ and $z\in Z$. Consequently, the rank matrices of the elements in $NaNZ$ have the form $XR(a)Y$. Since none of the diagonal entries $2m-8$, $2m-4$, $2m-6$ and $2m$ equals any of the non-diagonal entries in $R(a)$, there are no permutation matrices $X$ and $Y$ such that $R(a)^\mathsf{T}=XR(a)Y$. Then since $R(a^{-1})=R(a)^\mathsf{T}$, we derive that $a^{-1}\notin NaNZ$ and so $g^{-1}\notin HgH$, as desired.

Examining the possible overgroups in~\cite[Tables~II--VI]{LPS1987} of the primitive group $G$ on $[G\,{:}\,H]$, we deduce that, except for the alternating and symmetric groups on $[G\,{:}\,H]$, each overgroup of $G$ has socle $G$. So the triple $(G,H,g)$ satisfies conditions~\eqref{THM001.2} and~\eqref{THM001.1} of Theorem~\ref{THM001}. Take a generator $\delta$ of the multiplicative group of $\mathbb{F}_q$. Let $\lambda$ be the similarity which maps each $e_{ij}$ to $\delta e_{ij}$ while fixing each $f_{ij}$, and let $\phi\colon V\to V$ be the mapping such that
\[
\Big(\sum_{i,j}x_{ij}e_{ij}+y_{ij}f_{ij}\Big)^\phi
=\sum_{i,j}x_{ij}^pe_{ij}+y_{ij}^pf_{ij}.
\]
Then $\langle\Sp(V),\lambda\rangle=\GSp(V)$, $\langle\GSp(V),\phi\rangle=\CGammaSp(V)$, and both $\lambda$ and $\phi$ stabilize $\mathcal{D}$ and $\mathcal{D}^a$.
%Then the similarity, which maps each $e_{ij}$ to $\delta e_{ij}$ while fixing each $f_{ij}$, is an element in $\GSp(V)\setminus\Sp(V)$ and stabilizes $\mathcal{D}$ and $\mathcal{D}^a$.
Hence, the stabilizer $K$ modulo scalars of $\mathcal{D}$ in $\PCGammaSp(V)$ satisfies that $K\cap K^g\nleq G$ and so $G(K\cap K^g)=\PCGammaSp(V)=\Aut(G)$. Thus, Theorem~\ref{THM001}\eqref{THM001.3} also holds for $(G,H,g)$, and we conclude that $\B(G,H,H,HgH)$ is a $G$-biprimitive semisymmetric graph.
\end{proof}

\section{Almost simple groups of the same order}\label{SEC5}

The purpose of this section is to establish Proposition~\ref{THM004}, which will be used in the proof of Proposition~\ref{THM002} to address maximal almost simple subgroups of alternating groups. The primary ideas for the proof are presented by Artin~\cite{Artin1955} in 1955, where most cases of simple groups of the same order are considered. Based on Artin's method, Kimmerle, Lyins, Sandling and Teague~\cite{KLST1990} examined all simple groups and demonstrated that $\{\POm_{2m+1}(q),\PSp_{2m}(q)\}$ and $\{\PSL_4(2),\PSL_3(4)\}$ are the only pairs of simple groups of the same order. Following the framework outlined in~\cite{KLST1990}, we investigate the dominant prime and Artin invariants in first two subsections and complete the proof of Proposition~\ref{THM004} in Subsection~\ref{subsec5.3}.
%In this section, we prove Theorem~\ref{THM004}, which states that apart from two exceptions, any two almost simple groups of the same order share the same socle. This result will be used in the proof of Theorem~\ref{THM002} to address almost simple maximal subgroups of alternating groups.

\subsection{Groups of Lie type whose characteristic contributes the most to the order}\label{SUBSEC5.1}

Let $n$ be a positive integer. For a prime $p$, denote by $n_p$ the largest power of $p$ dividing $n$. If $n\geq2$, then the \emph{dominant prime} $r$ of $n$ is the unique prime such that $n_r>n_p$ for any prime $p\neq r$.
%We define the dominant prime of a group as the dominant prime of its order.
The aim of this subsection is to establish Propositions~\ref{PROP011} and~\ref{PROP012}, which state that except for a few cases, the order of every almost simple group of Lie type has the dominant prime equal to its characteristic.

Let $q\geq2$ and $m\geq2$ be integers. A prime number is called a \emph{primitive prime divisor} of the pair $(q,m)$ if it divides $q^m-1$ but does not divide $q^i-1$ for any positive integer $i<m$. By Fermat's Little Theorem, each primitive prime divisor $r$ of $(q,m)$ satisfies that $r\equiv1\ (\bmod\ m)$ and so $r>m$. The existence of primitive prime divisors is revealed in the following Zsigmondy's Theorem (see~\cite[Theorem~IX.8.3]{HB1982} for example), where the final statement follows from Fermat's Little Theorem.

\begin{lemma}[{Zsigmondy's Theorem}]\label{lem0.10}
There exists a primitive prime divisor for each pair $(q,m)$, except when $(q,m)=(2,6)$ or $(q,m)=(2^f-1,2)$ for some integer $f>1$.
\end{lemma}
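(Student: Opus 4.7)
The plan is to exploit the cyclotomic factorization $q^m - 1 = \prod_{d \mid m} \Phi_d(q)$, where $\Phi_d$ denotes the $d$-th cyclotomic polynomial. The first step is to reduce the problem to the single factor $\Phi_m(q)$: a prime $r$ divides $\Phi_m(q)$ if and only if either the multiplicative order of $q$ modulo $r$ is $m$ (in which case $r$ is a primitive prime divisor of $(q,m)$) or this order is a proper divisor $d$ of $m$ with $m/d$ a power of $r$ (in which case $r \mid m$). Thus, the existence of a primitive prime divisor is equivalent to $\Phi_m(q)$ admitting a prime factor that does not divide $m$.

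The second step is an arithmetic refinement. Using a lifting-the-exponent computation, one shows that if a prime $r$ divides both $\Phi_m(q)$ and $m$, then $r$ must be the largest prime divisor of $m$ and $r^2 \nmid \Phi_m(q)$, with the sole exception $m = 2$, $r = 2$ (where $\Phi_2(q) = q + 1$ may be a high power of $2$). Hence for $m \geq 3$, the product of the prime divisors of $\Phi_m(q)$ that also divide $m$ is at most the largest prime factor $p$ of $m$, so producing a primitive prime divisor reduces to the inequality $\Phi_m(q) > p$.

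The third step is a lower bound on $\Phi_m(q)$. Writing $\Phi_m(q) = \prod_\zeta (q - \zeta)$ over the primitive $m$-th roots of unity $\zeta$ and pairing complex conjugates yields $\Phi_m(q) \geq (q-1)^{\varphi(m)}$, with strict inequality as soon as some $\zeta \neq 1$ occurs. This estimate exceeds $m$ outside a small explicit list of pairs $(q, m)$, each of which can be checked directly. For $m = 2$, the separate analysis $\Phi_2(q) = q + 1$ shows that a primitive prime divisor fails precisely when $q + 1$ is a power of $2$, giving the $q = 2^f - 1$ exception; the pair $(q, m) = (2, 6)$ then emerges as the only other non-trivial exception, since $\Phi_6(2) = 3$ coincides with the largest prime factor of $6$.

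The main obstacle will be the second step, namely the precise description of when $\Phi_m(q)$ and $m$ share a prime factor and of the multiplicity with which such a prime appears in $\Phi_m(q)$; controlling this is what lets the lower bound $\Phi_m(q) > p$ be converted into the existence of a prime factor outside $m$. Once this is in hand, the bounding argument and the small-case verification are routine, though the $m = 2$ case intrinsically admits an infinite family of exceptions and is genuinely independent of the main argument.
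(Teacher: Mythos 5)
The paper does not actually prove this lemma; it is quoted from Huppert--Blackburn \cite{HB1982} (Theorem~IX.8.3), so there is no in-paper argument to compare against. Your outline follows the standard cyclotomic route, and your first two steps are sound: the reduction to finding a prime factor of $\Phi_m(q)$ coprime to $m$, and the fact that for $m\geq3$ the non-primitive part of $\Phi_m(q)$ is either $1$ or exactly the largest prime factor $p$ of $m$ (occurring to the first power only), are both correct and constitute the real structural content of the proof.

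The gap is in the third step. The bound $\Phi_m(q)>(q-1)^{\varphi(m)}$ gives $\Phi_m(q)>2^{\varphi(m)}\geq2^{p-1}\geq p$ for every $q\geq3$ (using $(p-1)\mid\varphi(m)$ since $p\mid m$), so in that range there are in fact no exceptional pairs to check at all. But for $q=2$ the bound degenerates to $\Phi_m(2)>1$, which never exceeds $p$, so your ``small explicit list of pairs $(q,m)$ to be checked directly'' is actually the infinite family of all $(2,m)$ with $m\geq3$ --- and this is precisely where the genuine exception $(2,6)$ lives. To close the argument you need a sharper lower bound that survives at $q=2$: for instance, from the M\"obius product $\Phi_m(q)=q^{\varphi(m)}\prod_{d\mid m}(1-q^{-d})^{\mu(m/d)}$ one obtains $\Phi_m(2)>2^{\varphi(m)}\prod_{k\geq1}(1-2^{-k})>0.28\cdot2^{\varphi(m)}$, which exceeds $p$ unless $\varphi(m)\leq4$; the finitely many remaining values $m\in\{3,4,5,6,8,10,12\}$ are then checked by hand, and only $m=6$ fails. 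Without some such refinement the case $q=2$, which is the hardest instance of the lemma and the one the paper actually invokes most often, is not established. (Your $m=2$ analysis and the identification of the two exceptional families are themselves correct.)
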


For an integer $n$, denote $Q(n)=n_r$, where $r$ is the dominant prime to $n$.

\begin{lemma}[{\cite[Page~464]{Artin1955}}]\label{LEM014}
Let $n$ and $q$ be positive integers, and let
\[
f_1(n,q)=Q\left(\prod_{i=1}^{n}(q^i-1)\right),\ \
f_2(n,q)=Q\left(\prod_{i=1}^{n}(q^i-(-1)^i)\right),\ \
f_3(n,q)=Q\left(\prod_{i=1}^{n}(q^{2i}-1)\right).
\]
\begin{enumerate}[{\rm(a)}]
\item\label{LEM014.1} If $q$ is even, then $f_i(n,q)\leq3^{\frac{n}{2}}(q+1)^n$ for each $i\in\{1,2,3\}$.
\item\label{LEM014.2} If $q$ is odd, then $f_i(n,q)\leq2^n(q+1)^n$ for $i\in\{1,2\}$, and $f_3(n,q)\leq4^n(q+1)^n$.
\end{enumerate}
\end{lemma}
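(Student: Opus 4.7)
For each prime $r$, the plan is to bound $r^{v_r(N)}$, where $N$ denotes the product defining the relevant $f_i(n,q)$, and then use $f_i(n,q)\leq\max_r r^{v_r(N)}$. Every prime $r$ dividing any of the three products is coprime to $q$ (else $r\mid q$ would force $q^j\pm 1\not\equiv 0\pmod r$), so the Lifting-the-Exponent lemma applies throughout. Observe also the divisibility $\prod_{i=1}^n(q^i-(-1)^i)\mid\prod_{i=1}^n(q^{2i}-1)$, from which the $r$-part of $f_2(n,q)$ is at most the $r$-part of $f_3(n,q)$ for every prime $r$.

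Fix an odd prime $r$ and set $d=\ord_r(q)$. Then $v_r(q^j-1)=v_r(q^d-1)+v_r(j/d)$ when $d\mid j$, and is $0$ otherwise, yielding
\[
v_r\!\Big(\prod_{i=1}^n(q^i-1)\Big)=\lfloor n/d\rfloor\,v_r(q^d-1)+v_r(\lfloor n/d\rfloor!).
\]
Since $r^{v_r(q^d-1)}\mid q^d-1$ and $v_r(m!)\leq m/(r-1)$ by Legendre's formula, the $r$-part of this product is at most $(q^d-1)^{n/d}\cdot r^{n/(d(r-1))}\leq q^n\cdot r^{n/(d(r-1))}$. The elementary inequality $r^2\leq 3^{d(r-1)}$, tight only at $(r,d)=(3,1)$ and valid for every odd prime $r\geq 3$ and every $d\geq 1$, then gives $r^{n/(d(r-1))}\leq 3^{n/2}$. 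A parallel summation---with the index set becoming $\{i\leq n:d\mid i\}$ when $d$ is odd and $\{i\leq n:(d/2)\mid i\}$ when $d$ is even---handles $f_3(n,q)$ and produces the same bound $3^{n/2}(q+1)^n$, which transfers to $f_2(n,q)$ by the divisibility noted above. Part~\eqref{LEM014.1} then follows because for $q$ even all three products are odd, so the dominant prime is odd.

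For part~\eqref{LEM014.2} it remains to control the $r=2$ part when $q$ is odd. Set $\alpha=v_2(q-1)$ and $\beta=v_2(q+1)$; since $q$ is odd, $\min(\alpha,\beta)=1$. The $p=2$ version of LTE yields $v_2(q^i-1)=\alpha$ for $i$ odd and $v_2(q^i-1)=\alpha+\beta+v_2(i)-1$ for $i$ even, so
\[
v_2\!\Big(\prod_{i=1}^n(q^i-1)\Big)=n\alpha+\lfloor n/2\rfloor\beta+v_2(\lfloor n/2\rfloor!).
\]
A short case-split on $q\bmod 4$---using $\beta=1$ when $q\equiv 1\pmod 4$ and $\alpha=1$ when $q\equiv 3\pmod 4$---together with $2^\alpha\leq q-1$, $2^\beta\leq q+1$, and $v_2(\lfloor n/2\rfloor!)<n/2$ yields $2^{v_2}\leq 2^n(q+1)^n$; the identical analysis with $\alpha$ and $\beta$ interchanged handles $f_2(n,q)$. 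For $f_3(n,q)$ the uniform identity $v_2(q^{2i}-1)=\alpha+\beta+v_2(i)$ gives $v_2(\prod_{i=1}^n(q^{2i}-1))=n(\alpha+\beta)+v_2(n!)$, and the key bound $2^{\alpha+\beta}\leq 2(q+1)$ (again from $\min(\alpha,\beta)=1$) combined with $v_2(n!)\leq n-1$ produces $2^{v_2}\leq(2(q+1))^n\cdot 2^{n-1}<4^n(q+1)^n$. Since $3^{n/2}<2^n$, the $r=2$ contribution dominates when $q$ is odd, completing the proof. The main obstacle is the $r=2$ analysis for $q$ odd: nailing the precise constants requires both the case-split on $q\bmod 4$ and the tight bound $2^{\alpha+\beta}\leq 2(q+1)$ made available by $\min(\alpha,\beta)=1$.
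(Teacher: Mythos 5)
The paper does not actually prove this lemma---it is quoted from page~464 of Artin's 1955 paper---so there is no in-house argument to compare against; your prime-by-prime strategy (bound $r^{v_r(N)}$ via lifting-the-exponent and Legendre's formula, then take the maximum over $r$) is the right one and is essentially Artin's. Most of the details check out: the identity $v_r\big(\prod_{i\le n}(q^i-1)\big)=\lfloor n/d\rfloor v_r(q^d-1)+v_r(\lfloor n/d\rfloor!)$ for odd $r$ with $d=\ord_r(q)$, the inequality $r^2\le 3^{d(r-1)}$, the reduction of part~(a) to odd primes, and the $2$-adic computations with $\alpha=v_2(q-1)$, $\beta=v_2(q+1)$ and $\min(\alpha,\beta)=1$ are all correct.

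There is, however, one genuine gap, in the odd-prime bound for $f_3$ --- on which your bound for $f_2$, and hence two of the three assertions of part~(a), also rest. When $d=\ord_r(q)$ is even, say $d=2e$, your ``parallel summation'' yields $v_r\big(\prod_{i\le n}(q^{2i}-1)\big)=\lfloor n/e\rfloor v_r(q^d-1)+v_r(\lfloor n/e\rfloor!)$; if you now estimate $r^{v_r(q^d-1)}\le q^d-1$ exactly as in the $f_1$ case, the first term contributes $(q^d-1)^{\lfloor n/e\rfloor}\le q^{2n}$ rather than $(q+1)^n$, and $3^{n/2}q^{2n}$ exceeds $3^{n/2}(q+1)^n$ (and exceeds $4^n(q+1)^n$ once $q\ge5$), so the claimed bound does not follow from the summation as described. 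The repair is one line but must be stated: since $2e$ is the exact order of $q$ modulo $r$, we have $r\nmid q^e-1$, so from $q^d-1=(q^e-1)(q^e+1)$ it follows that $r^{v_r(q^d-1)}$ divides $q^e+1\le(q+1)^e$, whence $\big(r^{v_r(q^d-1)}\big)^{\lfloor n/e\rfloor}\le(q+1)^n$, while the factorial term is at most $r^{n/(e(r-1))}\le 3^{n/2}$ as before. With that observation inserted, the argument goes through.
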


We are now ready to determine the dominant prime for classical almost simple groups.

\begin{proposition}\label{PROP011}
Let $G$ be a classical almost simple group over a field $\mathbb{F}$. Then the dominant prime $r$ of $|G|$ equals the characteristic of $\mathbb{F}$ except for the following cases:
\begin{enumerate}[{\rm(a)}]
\item\label{PROP011.1} $G$ has socle $\PSL_2(p)$ with $p$ a Mersenne prime or a Fermat prime, and $r=2$;
\item\label{PROP011.2} $G$ has socle $\PSL_2(2^f)$ with $r=2^f+1$ a Fermat prime;
\item\label{PROP011.3} $r=2$, and $G=\PSL_2(9).2$, $\PGammaL_2(9)$, $\PGammaL_2(25)$, $\PGammaL_2(49)$, $\PGammaL_2(81)$, $\PSL_3(3).2$, $\PGSp_4(3)$, $\PSU_3(3)$, $\PSU_3(3).2$ or $\PSU_4(3).\mathrm{D}_8$;
\item\label{PROP011.4} $r=3$, and $G=\PSL_2(8)$, $\PGammaL_2(8)$, $\PGU_3(8).3$ or $\PSU_4(2)$.
\end{enumerate}
\end{proposition}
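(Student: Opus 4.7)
The plan is to adapt the strategy of Artin~\cite{Artin1955}, refined by Kimmerle, Lyons, Sandling, and Teague~\cite{KLST1990}, from simple groups to almost simple groups. Write $T=\Soc(G)$, let $q=p^f$ with $p$ the defining characteristic, and note that the $p$-part of $|T|$ equals $q^N$, where $N$ is the number of positive roots of the associated Lie algebra: concretely, $N=n(n-1)/2$ for $\PSL_n(q)$ and $\PSU_n(q)$, $N=m^2$ for $\PSp_{2m}(q)$ and $\POm_{2m+1}(q)$, and $N=m(m-1)$ for $\POm^{\pm}_{2m}(q)$. Passing from $T$ to $G$ contributes to $|G|_p$ at most the field-automorphism order $f$, while diagonal and graph automorphisms contribute to $|G|_{p'}$ by a factor bounded by a small constant depending only on the type.

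The first step is to establish $|G|_p > Q(|G|_{p'})$ whenever both the rank and $q$ are sufficiently large. Since $|G|_p \geq q^N$ while Lemma~\ref{LEM014} yields $Q(|G|_{p'}) \leq c \cdot f_i(n,q)$ for a small constant $c$ and suitable $i \in \{1,2,3\}$, it suffices to verify $q^N > c\cdot f\cdot f_i(n,q)$. For example, for socle $\PSL_n(q)$ the bound $f_1(n,q) \leq 3^{n/2}(q+1)^n$ together with $q+1\leq 2q$ gives $q^{n(n-1)/2} > f\cdot f_1(n,q)$ once $n$ is moderately large (say $n\geq 6$) and $q\geq 2$. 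Analogous inequalities handle the remaining classical families beyond a small rank threshold, reducing the problem to finitely many small-rank cases and to small $q$ at moderate rank.

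The second step is a case-by-case treatment of those remaining groups: socles $\PSL_2(q)$, $\PSL_3(q)$, $\PSL_4(q)$, $\PSp_4(q)$, $\POm^{\pm}_8(q)$, $\PSU_3(q)$, $\PSU_4(q)$, together with small-$q$ instances in slightly higher rank. For socle $\PSL_2(q)$ the decisive comparison is between $|G|_2$ and $|G|_p$, and the dominant prime drops to $2$ in precisely three subcases: when $q=p$ is a Mersenne prime, so that $q+1$ is a power of $2$ and both $\PSL_2(p)$ and $\PGL_2(p)$ acquire $r=2$; when $q=p$ is a Fermat prime and $G=\PGL_2(p)$, where the extra factor of $2$ from the diagonal automorphism tips the balance; or when the field automorphism supplies an additional $2$-power, producing the groups $\PSL_2(9).2$, $\PGammaL_2(9)$, $\PGammaL_2(25)$, $\PGammaL_2(49)$ and $\PGammaL_2(81)$ listed in statement~\eqref{PROP011.3}. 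The Fermat-prime case $r=2^f+1$ for socle $\PSL_2(2^f)$ in statement~\eqref{PROP011.2} is found analogously. The remaining sporadic exceptions in~\eqref{PROP011.3} and~\eqref{PROP011.4}, namely $\PSL_3(3).2$, $\PGSp_4(3)$, $\PSU_3(3)$, $\PSU_3(3).2$, $\PSU_4(3).\mathrm{D}_8$, $\PSL_2(8)$, $\PGammaL_2(8)$, $\PGU_3(8).3$ and $\PSU_4(2)$, are identified by direct inspection of the full factorization of $|G|$, and Zsigmondy's theorem (Lemma~\ref{lem0.10}) is invoked to rule out further exceptions at higher rank or larger $q$.

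The principal obstacle will be the bookkeeping in the small-rank case analysis. Each group listed in~\eqref{PROP011.3} and~\eqref{PROP011.4} must be verified individually, and each candidate small-rank $G$ not on the list must be excluded by a sharper inequality than the generic one used in the first step. The $\PSL_2(q)$ analysis is the most delicate, since the presence or absence of diagonal and field automorphisms can shift the dominant prime in either direction, so the enumeration must distinguish carefully between $\PSL_2(q)$, $\PGL_2(q)$, $\PGammaL_2(q)$, and all intermediate overgroups.
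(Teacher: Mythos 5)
Your proposal follows essentially the same route as the paper's proof: the inequality $|G|_p \geq q^N$ versus the Artin-type bounds $f_i(n,q)$ of Lemma~\ref{LEM014} (this is exactly the paper's inequality~\eqref{eq009}), reduction to small rank and small $q$, and then a case analysis of the low-rank families in which Zsigmondy's theorem and the parity of the diagonal/field automorphism contributions isolate the exceptions. Your finer observation that for a Fermat prime $p$ only $\PGL_2(p)$, and not $\PSL_2(p)$ itself, actually has dominant prime $2$ is correct and consistent with the paper, whose statement~(a) records only the necessary condition.
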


\begin{proof}
Let $\mathbb{F}$ have characteristic $p$ and order $q=p^f$, and let $T$ be the socle of $G$. Suppose that the dominant prime $r$ of $|G|$ is not equal to $p$.
%We first deal with the case when $G$ is a classical group. Note that for each $T$, it holds
Take $d$ to be the integer given in~\cite[Table~5.1.A]{KL1990} (it is usually the order of the multiplier of $T$). Then $d$ divides $|\Out(T)|$, and so
\begin{equation}\label{eq009}
|T|_p\leq|G|_p<|G|_r\leq\left(\frac{|\Out(T)|}{d}\right)_r\cdot(d|T|)_r\leq\frac{|\Out(T)|}{d}\cdot(d|T|)_r.
\end{equation}
Moreover, checking the orders of classical simple groups, we see that
%\begin{align*}
%\begin{aligned}
%&(d|\PSL_n(q)|)_r\leq f_1(n,q),&&(d|\PSU_n(q)|)_r\leq f_2(n,q),&&(d|\PSp_{2n}(q)|)_r\leq f_3(n,q),\\
%&(d|\POm_{2n+1}(q)|)_r\leq f_3(n,q),&&(d|\POm_{2n}^+(q)|)_r\leq f_3(n,q),&&(d|\POm_{2n}^-(q)|)_r\leq f_3(n,q),
%\end{aligned}
%\end{align*}
\begin{align*}
\begin{aligned}
&(d|\PSL_n(q)|)_r\leq f_1(n,q)\text{ for }n\geq2,
\ \ (d|\PSp_{2n}(q)|)_r\leq f_3(n,q)\text{ for }n\geq2,\\
&(d|\PSU_n(q)|)_r\leq f_2(n,q)\text{ for }n\geq3,
\ \ (d|\POm_{2n+1}(q)|)_r\leq f_3(n,q)\text{ for }n\geq3,\\
&(d|\POm_{2n}^+(q)|)_r\leq f_3(n,q)\text{ for }n\geq4,
\ \ (d|\POm_{2n}^-(q)|)_r\leq f_3(n,q)\text{ for }n\geq4,
\end{aligned}
\end{align*}
where $f_1$, $f_2$ and $f_3$ are the functions in Lemma~\ref{LEM014}.
%$(d|T|)_r\leq f_i(n,q)$ for a function $f_i\in\{f_1,f_2,f_3\}$ as shown in Lemma~\ref{LEM014}.
Combining this with Lemma~\ref{LEM014} and substituting the respective upper bound into~\eqref{eq009}, we obtain an inequality for $|T|_p$, which leads to one of the following:
\begin{enumerate}[{\rm(i)}]
\item\label{PROP011.C1} $T=\PSL_2(q)$, $\PSL_3(q)$ or $\PSU_3(q)$;
\item\label{PROP011.C2} $n=2$, and $T=\PSp_4(3)$ or $\PSp_4(4)$;
\item\label{PROP011.C3} $n=4$, and $q\leq9$;
\item\label{PROP011.C4} $n=5$, and $q\leq3$;
\item\label{PROP011.C5} $n=6$, and $T=\PSL_6(2)$ or $\PSU_6(2)$.
\end{enumerate}
Direct computation shows that, up to isomorphism, $\PGSp_4(3)$, $\PSU_4(2)$ and $\PSU_4(3).\mathrm{D}_8$ are the only three groups of $G$ in~\eqref{PROP011.C2}--\eqref{PROP011.C5} such that $r\neq p$, and the corresponding pair $(r,G)$ lies in~\eqref{PROP011.3} or~\eqref{PROP011.4} of the Proposition~\ref{PROP011}.

For~\eqref{PROP011.C1}, we first deal with $\PSL_3(q)$ and $\PSU_3(q)$. For convenience, we let $T=\PSL_3(q)$ with $q$ possibly negative to cover the case $T=\PSU_3(q)$. Then~\eqref{eq009} leads to
\[
|q|^3=|T|_p
\leq2f\cdot|q^3(q^3-1)(q^2-1)|_r
=2f\cdot|(q^2+q+1)(q+1)(q-1)^2|_r.
\]
Since $r\neq p$, we have $\gcd(q^2+q+1,q+1)_r=1$, $\gcd(q^2+q+1,(q-1)^2)_r\leq(3|q|)_r\leq3$ and $\gcd(q+1,(q-1)^2)_r\leq4_r\leq4$, whence $|(q^2+q+1)(q+1)(q-1)^2|_r\leq4(q-1)^2$. This yields
\[
|q|^3\leq2f\cdot|(q^2+q+1)(q+1)(q-1)^2|_r\leq8f(q-1)^2,
\]
which implies $|p|\leq7$ and $|q|\leq2^5$.
%which implies $|p|\leq17$*** and $|q|\leq2^6$***.
Then a straightforward calculation verifies that either $G\in\{\PSL_3(3).2,\PSU_3(3),\PSU_3(3).2\}$ and $r=2$ as in~\eqref{PROP011.3}, or $(r,G)=(3,\PGU_3(8).3)$ as in~\eqref{PROP011.4}.
%\begin{table}[H]
%\centering
%\caption{Almost simple groups $G$ of Lie type over a field $\mathbb{F}$ with dominant prime not equal to the characteristic $p$ of $\mathbb{F}$}
%%\vspace{-0.6em}
%\begin{tabular}{llll}
%\toprule
%$T=\Soc(G)$ & $|T|_p$ &  $d|T|$ & conditions\\
%\midrule
%$\PSL(2,q)$ & $q$ &
%--- &  \\
%\rule{0pt}{3ex}$\PSL(3,q)$ & --- &
%--- & ---\\
%\rule{0pt}{3ex}$\PSL(4,q)$ or $\PSU(4,q)$ & $q\leq9$ &
%--- & ---\\
%\rule{0pt}{3ex}$\PSL(5,q)$ or $\PSU(5,q)$  & $q\leq3$ &
%--- & ---\\
%\rule{0pt}{3ex}$\PSL(6,q)$ or $\PSU(6,q)$ & $q\leq2$ &
%--- & ---\\
%\rule{0pt}{3ex}$\PSp(4,q)$ or $\POm(5,q)$& $q\leq4$ &
%--- & ---\\
%\rule{0pt}{3ex}$\POm^+(6,q)$ or $\POm^-(6,q)$& $q\leq5$ &
%--- & ---\\
%\rule{0pt}{3ex}$\POm^+(8,q)$ & $q=2$ &
%--- & ---\\
%\bottomrule
%\end{tabular}
%\label{TABLE002}
%\end{table}

For the rest of the proof, let $T=\PSL_2(q)$ (with $q>3$ since $T$ is simple). Then
\[
|G|\,\text{ divides }\,fq(q+1)(q-1).
\]
If $f=1$, then $p<|G|_r\leq(p+1)_r(p-1)_r$, and so $r=2$ is the only prime divisor of $p+1$ or $p-1$, which means that $q=p$ is a Mersenne prime or a Fermat prime, as in~\eqref{PROP011.1}.

Next assume that $f\geq2$ and $r=2$. Then by Lemma~\ref{lem0.10} and $p\neq r$, there exists a primitive prime divisor of $(p,2f)$ greater than $2f$. Hence, $(2f(q+1))_2<q$, and then the fact
\[
q<|G|_2\leq(f(q+1)(q-1))_2\leq\max\{(2f(q+1))_2,(2f(q-1))_2\}
\]
yields that $q<(2f(q-1))_2$. If $f$ is odd, then this inequality indicates $q<(2(q-1))_2$, which is not possible since there exists a primitive prime divisor of $(p,f)$. Thus, $f$ is even, and so
\[
q<(2f(q-1))_2=\left(2f\big(\sqrt{q}+1\big)\big(\sqrt{q}-1\big)\right)_2
\leq4f(\sqrt{q}+1),
\]
which holds only when $q\in\{3^2,5^2,7^2,3^4\}$. Checking these possibilities, we conclude that $G\in\{\PSL_2(9).2,\PGammaL_2(9),\PGammaL_2(25),\PGammaL_2(49),\PGammaL_2(81)\}$, as in~\eqref{PROP011.3}.

Now assume that $f\geq2$ and $r>2$. Since $|G|_p\geq q>f$, the dominant prime $r$ of $|G|$ must divide $q+1$ or $q-1$. If $r$ divides $q-1$, then $q<|G|_r\leq(f(q-1))_r$ and in particular, neither $q=64$ nor $f=2$, which implies that there is a primitive prime divisor of $(p,f)$, contradicting $q<(f(q-1))_r$. Therefore, $r$ divides $q+1$. If there is no primitive prime divisor of $(p,2f)$, then Lemma~\ref{lem0.10} implies that $q=8$, as in~\eqref{PROP011.4}. Now let $s$ be a primitive prime divisor of $(p,2f)$. Then $s$ divides $p^f+1=q+1$, and $s>2f$. Thus, the inequality
\[
q<|G|_r\leq(f(q+1))_r=\left(\frac{f(q+1)}{s}\right)_r(s)_r
\]
implies that $s=r$ and $|G|_r=q+1=|T|_r$. Consequently, $r$ is also the dominant prime of $|T|$. According to~\cite[Theorem~1]{Artin1955}, $T=\PSL_2(2^f)$ with $r=2^f+1$ a Fermat prime, as in~\eqref{PROP011.2}. This completes the proof.
\end{proof}

For a positive integer $m$ and prime $r$, let $\Phi_m$ be the $m$th cyclotomic polynomial, and let $\ord_r(m)$ denote the \emph{order of $m$ modulo $r$}, namely, the smallest positive integer $k$ such that $m^k\equiv1\ (\bmod\ r)$. The following lemma (see~\cite[Lemma~IX.8.1]{HB1982} for a proof) identifies which $\Phi_m(q)$ is divisible by a given prime factor $r$ not dividing $q$.

\begin{lemma}\label{LEM006}
Let $q$ be an integer, let $r$ be a prime not dividing $q$, and let $k=\ord_r(q)$ be the order of $q$ modulo $r$. Then $r$ divides $\Phi_m(q)$ if and only if $m=kr^i$ for some integer $i\geq0$.
\end{lemma}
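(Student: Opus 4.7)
My plan is to work in the polynomial ring $\mathbb{F}_r[x]$ and use the Frobenius endomorphism to reduce the divisibility question to a statement about the multiplicative order of $q$ in $\mathbb{F}_r^*$. First I would write $m = n r^a$ with $\gcd(n, r) = 1$, so that the problem decouples into an $r$-part and an $r$-free part.

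The key algebraic input is the classical pair of identities for cyclotomic polynomials: for any prime $s$, $\Phi_{sn}(x) = \Phi_n(x^s)$ if $s \mid n$, and $\Phi_{sn}(x) = \Phi_n(x^s)/\Phi_n(x)$ otherwise. Iterating with $s = r$, starting from the $r$-free integer $n$, gives
\[
\Phi_{nr^a}(x) = \frac{\Phi_n(x^{r^a})}{\Phi_n(x^{r^{a-1}})} \quad (a \geq 1).
\]
Reducing modulo $r$ and invoking Frobenius---namely $\Phi_n(x^{r^j}) \equiv \Phi_n(x)^{r^j} \pmod r$, which follows from Fermat's little theorem applied to the coefficients---one obtains $\Phi_{nr^a}(x) \equiv \Phi_n(x)^{\phi(r^a)} \pmod r$. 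In particular, $r \mid \Phi_m(q)$ if and only if $r \mid \Phi_n(q)$, reducing the problem to the case $\gcd(m, r) = 1$.

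Since $\gcd(n, r) = 1$, the polynomial $x^n - 1$ is separable in $\mathbb{F}_r[x]$, so its factors $\Phi_d$ for $d \mid n$ are pairwise coprime and individually separable over $\mathbb{F}_r$. Hence every root of $\Phi_n$ in $\overline{\mathbb{F}_r}$ has multiplicative order exactly $n$---an order strictly less than $n$ would force the root to lie in some $\Phi_d$ with $d \mid n$ and $d < n$. Consequently $r \mid \Phi_n(q)$ if and only if the image of $q$ in $\mathbb{F}_r^*$ has order $n$; since that order equals $k$ by hypothesis, we conclude $n = k$, i.e.\ $m = k r^a$. The one step requiring real care is justifying the Frobenius reduction and the separability of $\Phi_n$ modulo $r$, as it is precisely this separability that underpins the root-order argument in the last step.
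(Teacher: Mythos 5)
Your argument is correct and complete in all essentials: the prime-power reduction via $\Phi_{nr^a}(x)\,\Phi_n(x^{r^{a-1}})=\Phi_n(x^{r^a})$ together with Frobenius (and cancellation in the integral domain $\mathbb{F}_r[x]$) gives $r\mid\Phi_m(q)\Leftrightarrow r\mid\Phi_n(q)$, and the separability of $x^n-1$ over $\mathbb{F}_r$ for $r\nmid n$ yields that the roots of $\Phi_n$ modulo $r$ are exactly the elements of multiplicative order $n$, whence $n=k$. The paper does not prove this lemma at all --- it cites~\cite[Lemma~IX.8.1]{HB1982} --- and your proof is the standard argument found there, so there is nothing to reconcile; the only micro-step left implicit is the converse direction that order $k$ forces $\bar q$ to be a root of $\overline{\Phi_k}$ rather than of $\overline{\Phi_d}$ for a proper divisor $d$ of $k$, which follows immediately from the factorization $x^k-1=\prod_{d\mid k}\Phi_d$ you already invoke.
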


The proposition below determines the dominant prime for almost simple groups of exceptional Lie type.

\begin{proposition}\label{PROP012}
Let $G$ be an almost simple exceptional group of Lie type over a field $\mathbb{F}$. Then the dominant prime $r$ of $|G|$ coincides with the characteristic of $\mathbb{F}$.
\end{proposition}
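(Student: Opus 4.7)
The plan is to follow the strategy of Proposition~\ref{PROP011} type by type. Let $T=\Soc(G)$ and $q=p^f$. For every exceptional Lie type the order of $T$ has the standard shape
\[
|T|=\tfrac{1}{d}\,q^N\prod_{i=1}^{s}(q^{d_i}-\epsilon_i),
\]
where $N$ is the number of positive roots of the associated root system and the parameters $d,s,d_i,\epsilon_i\in\{\pm1\}$ depend only on the type (for instance $|E_8(q)|=q^{120}\prod_{d\in\{2,8,12,14,18,20,24,30\}}(q^d-1)$). In particular $|G|_p\ge|T|_p=q^N$, and a case check shows that $|\Out(T)|\le 6f$ for every exceptional type. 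Assume for contradiction that the dominant prime $r$ of $|G|$ is distinct from $p$; the aim is then to derive $|G|_r<q^N$.

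Setting $k=\ord_r(q)$, Lemma~\ref{LEM006} together with the Lifting-the-Exponent lemma gives, for $r$ odd, the identity $(q^n-1)_r=(q^k-1)_r\cdot(n/k)_r$ whenever $k\mid n$, and $r\nmid q^n-1$ when $k\nmid n$; an analogous formula handles $r=2$. Rewriting each $q^d+1$ as $(q^{2d}-1)/(q^d-1)$ and multiplying over all factors of $|T|/q^N$ yields an explicit bound of the shape $|T|_r\le (q^k-1)^{n_k}\cdot C$, where $n_k$ is the number of $(q^{d_i}-\epsilon_i)$ divisible by $r$ and $C$ is a type-dependent integer constant (a product of certain $d_i$'s). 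Together with $|\Out(T)|\le 6f$, this produces an upper bound on $|G|_r$ to compare against $q^N$.

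For the large types $F_4$, $E_6$, $^2E_6$, $E_7$, $E_8$ and $^3D_4$, direct inspection shows that $\max_k k\cdot n_k$ is considerably smaller than $N$ (for example at most $30$ against $N=120$ for $E_8$), so the bound is dwarfed by $q^N$ for every $q\ge2$. The delicate part of the proof is the small types $G_2$, $^2B_2$, $^2G_2$ and $^2F_4$, where $|T|/q^N$ is comparable in magnitude to $q^N$; here the main technical obstacle is to prevent a single cyclotomic factor such as $q^2+1$, $q^2-q+1$ or $q^4-q^2+1$ from being a prime power whose size approaches $q^N$. This is handled via the standard Suzuki--Ree factorizations: for $q=2^{2k+1}$ one has $q^2+1=(q-2^{k+1}+1)(q+2^{k+1}+1)$; for $q=3^{2k+1}$ one has $q^2-q+1=(q-3^{k+1}+1)(q+3^{k+1}+1)$; and for $q=2^{2k+1}$ one has $q^4-q^2+1=(q^2+q+1-2^{k+1}(q+1))(q^2+q+1+2^{k+1}(q+1))$. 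In each case the two factors are coprime and each is bounded by a power of $q$ strictly smaller than $q^N$, so every prime-power divisor of the corresponding cyclotomic factor is smaller than $q^N$. Combining these estimates with the $|\Out(T)|$ bound yields $|G|_r<q^N$ in all remaining cases, completing the contradiction and the proof.
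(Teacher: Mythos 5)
Your overall strategy is the same as the paper's: assume the dominant prime $r$ differs from the defining characteristic $p$, bound $|G|_r$ from above, and show it cannot reach $|G|_p\geq q^N$. The implementation differs in two genuine respects. First, where the paper bounds $(d|T|)_r$ by $\prod q^{\delta_m\gamma_m}$ over the cyclotomic factors $\Phi_m(q)$ with $m=kr^i$ (via Lemma~\ref{LEM006}) and reads the exponent sums off \cite[Table~C.3]{KLST1990}, you bound each factor $q^{d_i}-\epsilon_i$ directly through the lifting-the-exponent identity. Second, for ${^{2}\BB}_2(q)$ the paper shows a dominant $r\mid q^2+1$ would force $q^2+1$ to be prime (using a primitive prime divisor of $(2,4f)$) and then invokes $5\mid 4^f+1$; your Aurifeuillian factorization $q^2+1=(q-2^{k+1}+1)(q+2^{k+1}+1)$ into coprime factors each less than $2q<q^2$ is a clean, genuinely different way to close this case. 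Note, however, that ${^{2}\BB}_2$ is the only small type that actually needs such an argument: for $\G_2$, ${^{2}\G}_2$ and ${^{2}\F}_4$ every cyclotomic factor of $|T|$ already lies far below $q^N$ (e.g.\ $q^2-q+1<q^2<q^3$ for ${^{2}\G}_2$), so the Ree factorizations you supply there resolve a difficulty that does not arise.

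The one place where the argument as written does not close is ${^{3}\D}_4(q)$, which you classify among the ``large types'' where $\max_k k\,n_k$ is considerably smaller than $N$. It is not: for $k=\ord_r(q)=12$ one has $n_{12}=1$ and $k\,n_{12}=12=N$, so your bound reads $|G|_r\leq(q^{12}-1)\cdot C\cdot|\Out(T)|_r$ with $C=1$, which need not be below $q^{12}$ once $|\Out(T)|_r>1$ (possible when $r\mid 3f$). The same degeneracy occurs for $\G_2(q)$ at $k=6$. The repair is to replace $(q^{k}-1)_r$ by $\Phi_k(q)_r$, which is legitimate by Lemma~\ref{LEM006} since $r$ divides $\Phi_m(q)$ only for $m=kr^i$ and these groups contain $\Phi_{12}$ (resp.\ $\Phi_6$) only to the first power; this gives $|T|_r\leq q^4-q^2+1<q^4$ for ${^{3}\D}_4$ and $|T|_r\leq q^2-q+1<q^2$ for $\G_2$, after which the estimate goes through comfortably. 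This refinement is precisely what the paper's $\delta_m\gamma_m$ bookkeeping performs automatically; with it inserted (and with the $r=2$ variant of lifting-the-exponent applied for $q$ odd, where the paper also treats $\G_2$ and ${^{2}\G}_2$ by a separate direct computation), your proof is complete.
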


\begin{proof}
Let $\mathbb{F}$ have characteristic $p$ and order $q=p^f$, and let $T$ be the socle of $G$. Suppose for a contradiction that $r\neq p$.

First assume that $T={^{2}\B}_2(q)$, where $q=2^f$ with odd $f\geq3$. Then $|G|$ divides $fq^2(q^2+1)(q-1)$, and clearly, $r$ must be a divisor of $q^2+1$. By  Lemma~\ref{lem0.10}, there exists a primitive prime divisor $s$ of $(p,4f)$ such that $s>4f$. Hence, the inequality
\[
q^2<|G|_r\leq(f(q^2+1))_r=\left(\frac{f(q^2+1)}{s}\right)_r(s)_r
\]
holds only when $s=r$ and $(q^2+1)_r=q^2+1$. This implies that $q^2+1$ is prime. However, $q^2+1=4^f+1$ is divisible by $5$ as $f$ is odd, a contradiction.

Now assume $T\neq{^{2}\B}_2(q)$. Direct computation shows that $T$ cannot be $^3\D_4(2)$, $^3\D_4(4)$ or $^3\D_4(8)$. Therefore, $|\Out(T)|/d\leq q$, where $d$ is the integer given in~\cite[Table~5.1.B]{KL1990}. For the case when $G=T$, the proposition is demonstrated in~\cite[Theorem~3.3]{KLST1990} by estimating cyclotomic polynomials $\Phi_m(q)$ in the cyclotomic factorization of $|T|$. In the following, we apply this estimation for $(d|T|)_r$. Consider the cyclotomic factorization in terms of $q$:
\[
d|T|=q^h\prod_{m}\Phi_m(q)^{\gamma_m}.
\]
Let $\delta_m$ be the smallest integer such that $\Phi_m(q)<q^{\delta_m}$ for all $q\geq2$. Then
\[
(d|T|)_r\leq\prod_{\substack{m\\r\mid\Phi_m(q)}}\Phi_m(q)^{\gamma_m}<
\prod_{\substack{m\\r\mid\Phi_m(q)}}q^{\delta_m\gamma_m}.
\]
Applying Lemma~\ref{LEM006}, we derive that $(d|T|)_r$ is bounded from above by $q^{\sum\delta_m\gamma_m}$, where the sum is taken over all $m$ of the form $m=kr^i$ with $i\geq0$. Note from the list of simple groups of exceptional Lie type that $m\leq30$. All the possible values of $\sum\delta_m\gamma_m$ are listed in~\cite[Table~C.3]{KLST1990}, and more precisely, one of the following occurs in~\cite[Table~C.3]{KLST1990}:
\begin{itemize}
\item $r\leq7$, $k=1$, and $\sum\delta_m\gamma_m$ is given in the rows labelled $\sum r^i$;
\item $r\leq7$, $k\neq1$, and $\sum\delta_m\gamma_m$ is given in the rows labelled $\times k$;
\item $r>7$ divides $\Phi_m(q)$ for at most one $m$, and $\sum\delta_m\gamma_m$ is given in the remaining rows.
\end{itemize}
Checking these rows, we conclude that either $\sum\delta_m\gamma_m\leq h-1$, or $r=2$ and $T\in\{\G_2(q),{^{2}\G}_2(q)\}$. For the latter case, we have
\[
|T|_r=|^{2}\G_2(q)|_2=\big((q-1)(q^3+1)\big)_2=\big(q^2-1\big)_2<q^2=q^{h-1}
\]
or
\[
|T|_r=|\G_2(q)|_2=\big((q^2-1)(q^6-1)\big)_2=\big((q^2-1)^2\big)_2<q^4<q^{h-1}.
\]
Hence,
\[
q^h<|G|_r\leq\left(\frac{|\Out(T)|}{d}\right)_r\cdot(d|T|)_r\leq q\cdot q^{h-1}=q^h,
\]
which is a contradiction. This finishes the proof.
\end{proof}

\subsection{Logarithmic proportion and Artin invariants}\label{subsec5.2}

The Logarithmic proportion and Artin invariants are useful quantities in distinguishing the orders of simple groups~\cite{Artin1955}. It turns out that they also apply to the orders of almost simple groups. In this subsection, we aim to determine the intervals of Logarithmic proportion and Artin invariants for almost simple groups with a given socle.

Let $n$ be an integer greater than $1$, and recall that $Q(n)=n_r$, where $r$ is the dominant prime to $n$. Define the \emph{logarithmic proportion} $\lambda(n)$ of $n$ as $\log Q(n)/\log n$. For a group $G$, we simply write $Q(G)=Q(|G|)$ and $\lambda(G)=\lambda(|G|)$. The following lemma gives an upper bound for the logarithmic proportion of alternating and symmetric groups.

\begin{lemma}\label{LEM010}
Let $G$ be an almost simple group with socle $\Al_n$. If $n>20$, then $\lambda(G)<0.3$.
\end{lemma}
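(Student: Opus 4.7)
The plan is to identify the dominant prime of $|G|$ as $r=2$ and then to sharply bound both the numerator $\log Q(G)$ and the denominator $\log|G|$ of $\lambda(G)$.

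Since $n>20>6$, one has $\Aut(\Al_n)=\Sy_n$, so $G\in\{\Al_n,\Sy_n\}$ and $|G|\geq n!/2$. By Legendre's formula, $v_p(n!)=(n-s_p(n))/(p-1)$, where $s_p(n)$ is the base-$p$ digit sum. Thus $v_2(n!)=n-s_2(n)$ is of order $n$, while $v_p(n!)\leq (n-1)/(p-1)$ for every odd prime $p$. Because $r\mapsto\log r/(r-1)$ is strictly decreasing for $r\geq 2$, the only nontrivial comparison is with $r=3$, and the inequality $2^{v_2(|G|)}>3^{v_3(|G|)}$ for $n>20$ is a short check using $s_2(n)\leq\lfloor\log_2 n\rfloor+1$.

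With $r=2$ established, I would take $Q(G)=|G|_2\leq 2^{n-s_2(n)}$ as the numerator bound. For the denominator, the elementary estimate $\log n!\geq \int_1^n\log x\, dx=n(\log n-1)+1$ gives $\log|G|\geq n(\log n-1)+1-\log 2$. The condition $\lambda(G)<0.3$ then rearranges to
\[
\log n \;>\; 3.31-\frac{2.31\,s_2(n)}{n}-\frac{0.31}{n},
\]
where $3.31\approx 1+(\log 2)/0.3$ and $2.31\approx(\log 2)/0.3$.

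Finally, I would check this inequality by splitting at $n=28$. For $n\geq 28$ we have $\log n\geq\log 28>3.31$, so the inequality holds trivially since the right-hand side is at most $3.31$. For the finitely many remaining cases $21\leq n\leq 27$, direct substitution of the known values of $s_2(n)$ settles each one, with $n=21$ being the tightest (there $s_2(21)=3$ and the two sides differ by only about $0.08$). The main obstacle is precisely this tightness at the base case: the crude numerator bound $v_2(n!)\leq n$ just fails at $n=21$, so it is essential to retain the exact digit-sum expression $v_2(n!)=n-s_2(n)$ from Legendre's formula rather than using the cruder $v_2(n!)\leq n-1$.
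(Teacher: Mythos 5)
Your proof is correct, but it takes a genuinely different route from the paper. The paper simply quotes the tabulated bound $\lambda(\Al_n)\leq 0.27$ for $n>20$ from \cite[Table~L.4]{KLST1990} and then disposes of $G=\Sy_n$ with the one-line estimate $\lambda(\Sy_n)<\log 2/\log|\Al_n|+\lambda(\Al_n)<0.02+0.27<0.3$, using $|\Al_n|>20!/2>2^{50}$. You instead rederive everything from first principles: identify the dominant prime as $2$ via Legendre's formula, bound $Q(G)\leq 2^{n-s_2(n)}$, lower-bound $\log n!$ by $\int_1^n\log x\,dx$, and reduce to an explicit inequality in $n$ that is trivial for $n\geq 28$ and checked by hand for $21\leq n\leq 27$. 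Your version is self-contained (and its ingredients are already in the paper's toolkit, cf.\ Lemmas~\ref{lem0.2} and~\ref{lem0.5}), at the cost of a page of computation that the citation avoids; your observation that the crude bound $v_2(n!)\leq n$ fails at $n=21$ correctly pinpoints why the digit-sum form of Legendre's formula is needed. One small caveat: in the dominant-prime step, the envelope $s_2(n)\leq\lfloor\log_2 n\rfloor+1$ alone does not close the comparison $2^{v_2(|G|)}>3^{v_3(|G|)}$ for $n$ roughly in the range $21$ to $29$ (e.g.\ at $n=21$ it gives $v_2\geq 21-6=15$ while one needs to beat $3^{(n-1)/2}$, which requires the true value $s_2(21)=3$); since you are already checking $21\leq n\leq 27$ individually this is easily absorbed, but the phrase ``short check'' understates that the actual digit sums must be used there too.
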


\begin{proof}
The upper bounds of $\lambda(\Al_n)$ for any $n$ are listed in~\cite[Table~L.4]{KLST1990}. In particular, $\lambda(\Al_n)\leq0.27$ when $n>20$. Suppose that $G=\Sy_n$. Since $|\Al_n|>20!/2>2^{50}$, we obtain
\[
\lambda(G)=\frac{\log Q(G)}{\log|G|}<\frac{\log2+\log Q(\Al_n)}{\log|\Al_n|}=\frac{\log2}{\log|\Al_n|}+\lambda(\Al_n)<0.02+0.27<0.3,
\]
as the lemma assets.
\end{proof}

The logarithmic proportion of groups of Lie type is presented in Lemma~\ref{LEM007} with the help of the following well-known fact (see, for example,~\cite{Stefan}).

\begin{lemma}\label{LEM008}
For every finite nonabelian simple group $T$ we have $|\Out(T)|<\log_2|T|$.
\end{lemma}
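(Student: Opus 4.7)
The plan is to invoke the classification of finite simple groups and verify the inequality separately for each family: alternating groups, the $26$ sporadic groups together with the Tits group, and the groups of Lie type. In each case one compares an upper bound on $|\Out(T)|$ with a lower bound on $\log_2|T|$, and the bound is comfortable everywhere except in a handful of small-rank or small-field Lie type cases that require direct inspection.

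First I would dispose of the alternating groups: for $n\geq 5$ one has $|\Out(\Al_n)|\leq 4$, with equality only at $n=6$, whereas $|\Al_n|\geq 60$ gives $\log_2|\Al_n|\geq 5$. For the sporadic groups and the Tits group, $|\Out(T)|\leq 2$ in every instance, while the smallest order in the list is $|\M_{11}|=7920$, whose binary logarithm already exceeds $12$; a direct check against the Atlas completes this family.

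The main work lies with the groups of Lie type. Writing $T$ over $\mathbb{F}_q$ with $q=p^f$, the structure theorem for $\Out(T)$ yields a factorization into diagonal, field, and graph parts, whose orders are bounded by $d$, $f$, and at most $6$ respectively, with $d$ read off from~\cite[Tables~5.1.A and~5.1.B]{KL1990}. In particular $|\Out(T)|\leq 6df$, and in almost every case $d$ is at most the untwisted rank. On the other hand, $|T|\geq q^{N}/d$ for a rank-dependent exponent $N$ (essentially the dimension of the corresponding simple algebraic group), so
\[
\log_2|T|\;\geq\;Nf\log_2 p-\log_2 d\;\geq\;Nf-\log_2 d.
\]
For $N$ appreciably larger than $6d$ this immediately beats $6df$, giving the desired inequality.

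The main obstacle will be the finitely many small cases where these generic estimates are tight, namely $\PSL_2(q)$, $\PSL_3(q)$, $\PSU_3(q)$ and a few other low-rank classical and exceptional groups over small fields. For $\PSL_2(q)$ one has $|\Out|=\gcd(2,q-1)\cdot f$ while $|T|=q(q^2-1)/\gcd(2,q-1)$, and a one-line estimate using $\log_2(q(q^2-1)/2)>2f\log_2 p\geq 2f$ settles the family. The remaining small exceptions, together with $^2\B_2(q)$, $^2\G_2(q)$, $^3\D_4(q)$, and the low-rank cases in the classical series, I would handle by a short table check against~\cite[Tables~5.1.A and~5.1.B]{KL1990}; in every instance the explicit values of $|\Out(T)|$ and $|T|$ verify the bound with considerable room to spare, so the argument terminates with a finite, routine verification rather than any further structural analysis.
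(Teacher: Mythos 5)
Your argument is sound, but note that the paper does not actually prove this lemma: it is quoted as a ``well-known fact'' with a pointer to the reference \cite{Stefan}, and that note establishes the bound by precisely the CFSG case check you outline --- alternating and sporadic groups are immediate, and for Lie type one compares the factorization of $\Out(T)$ into diagonal, field and graph parts against a lower bound $|T|\geq q^{N}/d$. So your route coincides with that of the cited source rather than differing from it. Two points of care if you write the Lie-type case out in full. First, for the twisted groups the ``field part'' is $2f$ or $3f$ rather than $f$ (e.g.\ $|\Out(\PSU_n(q))|=\gcd(n,q+1)\cdot 2f$ and $|\Out({}^{3}\D_4(q))|=3f$), so your factor of $6$ must be understood as absorbing the twisting as well as the genuine graph automorphisms; it does, with the extreme case being $\POm_8^+(q)$ for $q$ odd, where $|\Out(T)|=24f$ while $\log_2|T|>24f\log_2 3-2$. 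Second, your generic inequality $6df<Nf-\log_2 d$ genuinely fails for $\PSL_2(q)$ and is tight for several rank-two and rank-three families, so the finite list of exceptional verifications you defer to is unavoidable --- but it does terminate, your separate one-line treatment of $\PSL_2(q)$ is correct, and none of this constitutes a gap.
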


\begin{lemma}\label{LEM007}
Let $G$ be an almost simple group of Lie type with socle $T$. Suppose that $|T|>2^{54}$. Then either $T\neq\PSL_2(q)$ and $\lambda(G)>0.3375$, or $T=\PSL_2(q)$ and $\lambda(G)>0.3$.
\end{lemma}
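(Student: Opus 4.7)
The strategy is to combine Propositions~\ref{PROP011} and~\ref{PROP012} with Lemma~\ref{LEM008} to reduce $\lambda(G)$ to a single ratio depending only on $|T|$ and $|T|_p$, and then to verify the two bounds family by family. Under the hypothesis $|T|>2^{54}$, Propositions~\ref{PROP011} and~\ref{PROP012} force the dominant prime $r$ of $|G|$ to equal the defining characteristic $p$ of $T$ in every case except the Mersenne/Fermat configurations for $T=\PSL_2(q)$ singled out in Proposition~\ref{PROP011}; those are set aside momentarily. In the generic case we have $Q(G)=|G|_p\geq|T|_p$, and Lemma~\ref{LEM008} together with $|G|\leq|T|\,|\Out(T)|$ yields
$$\lambda(G)\ \geq\ \frac{\log|T|_p}{\log|T|+\log\log_2|T|}.$$

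Write $\mu(T)=\log|T|_p/\log|T|$. The standard cyclotomic factorizations give $|T|_p=q^N$, where $N$ is the number of positive roots of the underlying root system, while $|T|$ is bounded above by $q^{\dim T}$ up to a small constant. A short case-by-case computation then shows $\mu(T)\geq 3/8$ for every family of Lie type other than $\PSL_2(q)$: the classical families contribute ratios of the form $n/(2(n+1))$, $n/(2n+1)$ and $(n-1)/(2n-1)$, each $\geq 3/8$ in its respective range, while each exceptional family (from $\G_2$ up to $\E_8$, together with the twisted variants $^2\B_2$, $^2\G_2$, $^3\D_4$, $^2\F_4$, $^2\E_6$) gives a strictly larger ratio. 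With $\mu(T)\geq 3/8$ in hand, the inequality $\lambda(G)>0.3375$ reduces to the elementary estimate $\log\log_2|T|<\tfrac{1}{9}\log|T|$, which holds with room to spare whenever $|T|>2^{54}$.

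For $T=\PSL_2(q)$ we have $|T|_p=q$ and $|T|=q(q^2-1)/\gcd(2,q-1)<q^3$, so $\mu(T)>1/3$ strictly, with $\mu(T)\to 1/3$ as $q\to\infty$. Since $|T|>2^{54}$ forces $q>2^{18}$ and $|\Out(T)|\leq 2f\leq 2\log_2 q$, the same framework gives $\lambda(G) > (1/3)/(1+\delta) > 0.3$ provided $\delta:=\log\log_2|T|/\log|T| < 1/9$, which again follows from $|T|>2^{54}$. For the Mersenne/Fermat prime configurations where the dominant prime is not $p$, a direct computation of the $2$-parts of $q\pm 1$ shows $Q(T)\geq q$, so the same lower bound on $\log Q(G)$ applies; the remaining case $T=\PSL_2(2^f)$ with $2^f+1$ a Fermat prime is automatically excluded, since the known Fermat primes force $|T|<2^{48}<2^{54}$.

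The main obstacle will be the $\PSL_2(q)$ case, where the target $0.3$ sits very close to the asymptotic value $1/3$; the argument leaves little slack and the specific constant $2^{54}$ must be chosen precisely large enough to absorb the $|\Out(T)|\leq 2f$ contribution at the smallest admissible $q$. This forces a careful numerical check rather than a crude asymptotic estimate, and it is this tightness that dictates the particular threshold appearing in the hypothesis.
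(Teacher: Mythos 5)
Your core reduction is exactly the paper's: from $|T|>2^{54}$ and Lemma~\ref{LEM008} one gets $|\Out(T)|<\log_2|T|<|T|^{1/9}$, hence $\lambda(G)>\tfrac{9}{10}\cdot\frac{\log Q(T)}{\log|T|}$, and it remains to bound the socle-level ratio below by $3/8$ (resp.\ $1/3$). Where you diverge is in how that ratio is established: the paper simply cites \cite[Proposition~3.5]{KLST1990} for $\lambda(T)\geq 3/8$ when $T\neq\PSL_2(q)$ and $\lambda(T)\geq 1/3$ when $T=\PSL_2(q)$, whereas you re-derive it from $|T|_p=q^{N}$ and $|T|<q^{\dim T}$, i.e.\ from the inequality $N/\dim T\geq 3/8$ checked family by family. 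That re-derivation is sound (the only families attaining $3/8$ are those of type $A_2$, where strictness comes from $|T|<q^{\dim T}$), and it makes the lemma self-contained at the cost of a case check the paper outsources.

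Two remarks on your treatment of the exceptional configurations. First, the detour through Propositions~\ref{PROP011} and~\ref{PROP012} is unnecessary: $Q(G)=\max_{r}|G|_r\geq|G|_p\geq|T|_p$ holds with no knowledge of which prime is dominant, so your displayed inequality $\lambda(G)\geq\log|T|_p/(\log|T|+\log\log_2|T|)$ requires no case distinction and the Mersenne/Fermat configurations need no separate argument. Second, and more substantively, disposing of the case $T=\PSL_2(2^f)$ with $2^f+1$ a Fermat prime by appealing to ``the known Fermat primes'' is not a proof: the nonexistence of Fermat primes beyond $65537$ is open, so you cannot rule out $|T|>2^{54}$ there. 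Fortunately nothing needs ruling out --- in that case $Q(G)\geq |T|_2=q$ (indeed $Q(T)=q+1$), and the same $1/3$ bound goes through. As written, however, that step is a gap, albeit one that is trivially repaired.
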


\begin{proof}
As $|T|>2^{54}$, we derive from Lemma~\ref{LEM008} that $|\Out(T)|<\log_2|T|<|T|^{1/9}$. Hence,
\[
\lambda(G)=\frac{\log Q(G)}{\log|G|}\geq\frac{\log Q(T)}{\log(|T|\cdot|\Out(T)|)}>\frac{9\log Q(T)}{10\log|T|}=\frac{9}{10}\lambda(T).
\]
Moreover, according to~\cite[Proposition~3.5]{KLST1990} (see also~\cite[Table~L.2]{KLST1990}), it holds $\lambda(T)\geq3/8$ when $T\neq\PSL_2(q)$, and $\lambda(T)\geq1/3$ when $T=\PSL_2(q)$. Then the lemma follows immediately by substituting these into the above inequality.
\end{proof}

For the rest of this subsection, we focus on the Artin invariants of groups of Lie type. The following definition follows~\cite{Artin1955} and~\cite{KLST1990}.

\begin{definition}\label{DEF002}
Let $G$ be a finite group, and let $r=r(G)$ be the dominant prime of $|G|$. Write
\begin{align*}
\ell(G)&=\log_rQ(G),\\
\omega(G)&=\max\{\ord_s(r)\mid s
\text{ is a prime divisor of } |G|/Q(G)\},\\
\psi(G)&=\max\big(\{\ord_s(r)\mid s
\text{ is a prime divisor of } |G|/Q(G)\}\setminus\{\omega(G)\}\cup\{0\}\big).
\end{align*}
We refer to $r(G)$, $\ell(G)$, $\omega(G)$ and $\psi(G)$ as the \emph{Artin invariants} of $G$. Moreover, define
\[
F_1(G)=\frac{\ell(G)}{\omega(G)}\ \ \text{ and }\ \ F_2(G)=\frac{\omega(G)}{\omega(G)-\psi(G)}-2F_1(G).
\]
\end{definition}

%Consider the \emph{cyclotomic factorization in terms of $p$} for each exceptional simple group $T$:
%\[
%|T|=\frac{1}{d}p^{\ell(T)}\prod_{m}\Phi_m(p)^{r_m},
%\]
%where $d$ is the integer given in~\cite[Table~5.1.A]{KL1990} and $\Phi_m$ is the $m$th cyclotomic polynomial with $r_m$ as its exponent.

Recall that the Artin invariant $r(G)$ for almost simple groups $G$ of Lie type is determined in Propositions~\ref{PROP011} and~\ref{PROP012}. In the final proposition of this subsection, we determine the other three Artin invariants of $G$ and estimate the values of $F_1(G)$ and $F_2(G)$.

\begin{proposition}\label{LEM013}
Let $G$ be an almost simple group of Lie type over a field of order $q=p^f$, where $p$ is a prime and $f$ is a positive integer, and let $T$ be the socle of $G$. Suppose
\begin{equation}\label{eq0.6}
|T|=\frac{1}{d}p^{h}\prod_{m}\Phi_m(p)^{r_m},
\end{equation}
where $d$ is the integer as in~\cite[Table~5.1]{KL1990}. Let $\alpha$ and $\beta$ be the largest and second largest elements in the set $\{m\mid r_m\neq0\}$. Then $(h,\alpha,\beta)$ lies in Table~$\ref{TABLE002}$, and one of the following holds:
\begin{enumerate}[{\rm(a)}]
 \item\label{LEM013.1} $r(G)=p$, $\ell(T)=h$, $\omega(G)=\alpha$, $\psi(G)=\beta$, and the values for $F_1(G)$ and $F_2(G)$ are described in Table~$\ref{TABLE002}$;
 % with $(p,\alpha,\beta)$ listed in Table~$\ref{TABLE002}$;
\item\label{LEM013.2} $G$ lies in Table~$\ref{TABLE003}$ with $r(G)$, $\ell(G)$, $\omega(G)$, $\psi(G)$, $F_1(G)$ and $F_2(G)$ described there;
\item\label{LEM013.3} $G$ lies in Proposition~$\ref{PROP011}$ with $|G|\leq2^{16}$.
\end{enumerate}
%Moreover, for each $G$ of order greater than $2^{16}$, the Artin invariants of $G$ and intervals for $F_1(G)$, $F_2(G)$ are given in Tables~$\ref{TABLE002}$ and~$\ref{TABLE003}$.
\end{proposition}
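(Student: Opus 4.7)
The plan is to extract the Artin invariants directly from the cyclotomic factorization of $|T|$ in \eqref{eq0.6}, invoking Propositions~\ref{PROP011} and~\ref{PROP012} to pin down the dominant prime, and then to isolate the finitely many ``outer-automorphism'' and Zsigmondy anomalies as the exceptions recorded in Tables~\ref{TABLE002}--\ref{TABLE003}.

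First I would handle the generic situation in which the dominant prime of $|G|$ equals the characteristic $p$. Under this assumption, Propositions~\ref{PROP011}--\ref{PROP012} guarantee $r(G)=p$, and inspection of $|\Out(T)|$ shows that no further $p$-power is contributed by the outer group, so $Q(G)=Q(T)=p^{h}$ and $\ell(T)=h$. The values of $(h,\alpha,\beta)$ for each Lie type can then simply be read off the cyclotomic factorizations listed in \cite[Table~5.1]{KL1990}; enumerating them gives Table~\ref{TABLE002}.

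The heart of the argument is identifying $\omega(G)$ and $\psi(G)$. For any prime $s\ne p$ dividing $|T|/p^{h}$, Lemma~\ref{LEM006} forces $\ord_{s}(p)$ to be a divisor of some $m$ with $r_{m}\neq0$, hence $\ord_{s}(p)\leq\alpha$, and analogously $\ord_{s}(p)\leq\beta$ once $s$ is removed from the factors $\Phi_{\alpha}(p)^{r_{\alpha}}$. Conversely, Zsigmondy's theorem (Lemma~\ref{lem0.10}) produces primitive prime divisors of $(p,\alpha)$ and $(p,\beta)$ whose existence forces these bounds to be attained; hence $\omega(G)=\alpha$ and $\psi(G)=\beta$, yielding conclusion~\eqref{LEM013.1}. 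The formulas for $F_{1}(G)$ and $F_{2}(G)$ then follow by direct substitution into Definition~\ref{DEF002}, giving the entries of Table~\ref{TABLE002}.

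Next I would catalogue all configurations that violate the generic analysis. Three sources of deviation arise. First, those $(G,r(G))$ with $r(G)\ne p$, which are exactly the pairs listed in Proposition~\ref{PROP011}\eqref{PROP011.1}--\eqref{PROP011.4}; after eliminating the small cases with $|G|\leq2^{16}$ (which go into conclusion~\eqref{LEM013.3}), the remainder such as $\PSL_{2}(2^{f})$ with $2^{f}+1$ Fermat are entered into Table~\ref{TABLE003} with their Artin invariants computed by hand from the specific prime factorizations. Second, the Zsigmondy exceptions $(p,m)=(2,6)$ and $(p,m)=(2^{f}-1,2)$, where a primitive prime divisor fails to exist and $\psi(G)$ can drop below $\beta$; these are finitely many and also recorded in Table~\ref{TABLE003}. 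Third, prime factors of $|\Out(T)|/d$ lying outside $|T|/Q(T)$ may enlarge $\omega$ or $\psi$: since $|\Out(T)|$ is explicit and bounded (Lemma~\ref{LEM008}), only finitely many configurations require separate treatment, and each is appended to Table~\ref{TABLE003}.

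The main obstacle will be the bookkeeping rather than any conceptual difficulty: each of the six classical families plus the exceptional families must be scanned, the cyclotomic exponents $r_{m}$ read off, and the contribution of outer automorphisms to $Q(G)/Q(T)$, $\omega$, and $\psi$ audited. I would organise the verification family by family (linear, unitary, symplectic, odd-dimensional orthogonal, even-dimensional orthogonal of each sign, then $\mathrm{G}_{2}$, $\mathrm{F}_{4}$, $\mathrm{E}_{6}$, ${}^{2}\mathrm{E}_{6}$, $\mathrm{E}_{7}$, $\mathrm{E}_{8}$, ${}^{3}\mathrm{D}_{4}$, ${}^{2}\mathrm{B}_{2}$, ${}^{2}\mathrm{G}_{2}$, ${}^{2}\mathrm{F}_{4}$), verifying in each case that $\alpha,\beta$ and the outer primes behave as advertised. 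The small-order groups left over then coincide with those flagged in Proposition~\ref{PROP011}, completing conclusion~\eqref{LEM013.3}.
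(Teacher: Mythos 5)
Your overall route is the same as the paper's (determine $r(G)$ via Propositions~\ref{PROP011}--\ref{PROP012}, read $(h,\alpha,\beta)$ off the cyclotomic factorization, combine Lemma~\ref{LEM006} with Zsigmondy to pin down $\omega(G)$ and $\psi(G)$, and push anomalies into Table~\ref{TABLE003}), but there is a genuine error at the start of your generic case. You claim that ``no further $p$-power is contributed by the outer group, so $Q(G)=Q(T)=p^{h}$.'' This is false: field and graph automorphisms can have order divisible by $p$, so $|\Out(T)|_p>1$ occurs and then $\ell(G)>\ell(T)=h$. A concrete instance inside Table~\ref{TABLE002} is $G=\PGammaL_3(3^3)$, where $h=9$ but $|G|_3=3^{10}$ and $3$ is still dominant, so $\ell(G)=10$. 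This is precisely why the proposition asserts only $\ell(T)=h$ (not $\ell(G)=h$) and why $F_1(G)$ and $F_2(G)$ appear in Table~\ref{TABLE002} as intervals of positive width: the paper proves $h/\alpha\leq F_1(G)\leq h/\alpha+\log_p\big((|\Out(T)|/d)_p\big)/\alpha$ and the second summand is the width. Under your claim the intervals would collapse to the single value $h/\alpha$, which contradicts the very table you are trying to establish and would break the later order comparisons in the proof of Proposition~\ref{THM004} that rely on these intervals.

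Two further points are under-argued. First, your bound $\ord_s(p)\leq\alpha$ is derived only for primes $s$ dividing $|T|/p^{h}$, while $\omega(G)$ also sees primes dividing $|G\,{:}\,T|$; you defer these to an open-ended ``audit.'' The paper closes this in one line: every prime factor $s$ of $|\Out(T)|/d$ satisfies $s\leq\alpha$, and $\ord_s(p)<s$ by Fermat, so such primes can never raise $\omega(G)$ above $\alpha$ (and if $\omega(G)>\alpha$, a primitive prime divisor of $(p,\omega(G))$ would have to be such a factor, giving the contradiction). Second, the claim $\psi(G)=\beta$ does not follow ``analogously'': a prime $s$ dividing only $\Phi_\alpha(p)$ has $\ord_s(p)=\alpha/s^{i}$ by Lemma~\ref{LEM006}, which is not obviously at most $\beta$, and the failure of Zsigmondy for $(p,\beta)$ is not confined to ``finitely many'' configurations --- the rows $\PSL_2(p^2)$, $\PSL_3(p)$, $\PSp_4(p)$, $\PSU_3(p)$ with $p=2^{\ell}-1$ a Mersenne prime form infinite families in Table~\ref{TABLE003}. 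These gaps are reparable, but the $Q(G)=Q(T)$ assertion must be replaced by the two-sided estimate on $\ell(G)$ for the proof to deliver the stated tables.
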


\begin{table}[htbp]
\centering
\caption{The Artin invariants of groups $G$ of Lie type over a field of order $q=p^f$}
\vspace{-1ex}
\begin{tabular}{lccccc}
\toprule
$T$  & $h$  &  $\alpha$ &
$\beta$  & $F_1$  & $F_2$ \\
\midrule
%\rule{0pt}{3ex}$(\Sp_{2m}(q),\OO_{2m}^-(q))$ & $q^{m^2}\prod_{i=1}^m\left(q^{2i}-1\right)$ &
%$q^m(q^m-1)/2$ & $m,q$ even\\
$\PSL_{n+1}(q)$, $p>2$ or $(n+1)f\geq9$   & $\frac{(n+1)nf}{2}$   &  $(n+1)f$  &
$nf$  &  $[\frac{n}{2},\frac{n}{2}+\frac{1}{4})$  &  $(\frac{1}{2},1]$\vspace{0.8ex}\\

$\PSL_{n+1}(2^f)$, $(n+1)f\leq8$    & $\frac{(n+1)nf}{2}$   &  $(n+1)f$  &
$nf$  &  $[\frac{n}{2},\frac{n}{2}+\frac{1}{3}]$  &  $[\frac{1}{3},1]$\vspace{0.8ex}\\

$\PSp_{2n}(q)$, $n\geq2$ and $T\neq\PSp_4(4)$  & $n^2f$   &  $2nf$  &
$2(n-1)f$  &  $[\frac{n}{2},\frac{n}{2}+\frac{1}{4})$  &  $(-\frac{1}{2},0]$\vspace{0.8ex}\\

$\PSp_{4}(4)$    & $8$   &  $4f$
& $2f$ & $\frac{5}{4}$  &  $\frac{1}{2}$\vspace{0.8ex}\\

$\PSU_{n}(q)$, $n\geq6$ even    & $\frac{n(n-1)f}{2}$   &  $2(n-1)f$  &
$2(n-3)f$  &  $[\frac{n}{4},\frac{n}{4}+\frac{1}{10}]$  &  $[-\frac{7}{10},-\frac{1}{2}]$\vspace{0.8ex}\\

$\PSU_{4}(q)$    & $6f$   &  $6f$  &
$4f$  &  $1$  &  $1$\vspace{0.8ex}\\

$\PSU_{n+1}(q)$, $n$ even    & $\frac{(n+1)nf}{2}$   &  $2(n+1)f$  &
$2(n-1)f$  &  $[\frac{n}{4},\frac{n}{4}+\frac{1}{6}]$  &  $[\frac{1}{6},\frac{1}{2}]$\vspace{0.8ex}\\

$\POm_{2n+1}(q)$, $n\geq3$    & $n^2f$   &  $2nf$  &
$2(n-1)f$  &  $[\frac{n}{2},\frac{n}{2}+\frac{1}{6}]$  &  $[-\frac{1}{3},0]$\vspace{0.8ex}\\

$\POm^+_{2n}(q)$, $n\geq4$    & $n(n-1)f$   &  $2(n-1)f$  &
$2(n-2)f$  &  $[\frac{n}{2},\frac{n}{2}+\frac{1}{6}]$  &  $[-\frac{4}{3},-1]$\vspace{0.8ex}\\

$\POm^-_{2(n+1)}(q)$, $n\geq3$    & $(n+1)nf$   &  $2(n+1)f$  &
$2nf$  &  $[\frac{n}{2},\frac{n}{2}+\frac{1}{8}]$  &  $[\frac{3}{4},1]$\\

\midrule

${^{2}\BB}_2(2^f)$, $f\equiv3\ (\bmod\ 6)$    & $2f$   &  $4f$  &
$\frac{4f}{3}$  &  $\frac{1}{2}$  &  $\frac{1}{2}$\vspace{0.8ex}\\

${^{2}\BB}_2(2^f)$, $f\equiv\pm1\ (\bmod\ 6)$    & $2f$   &  $4f$  &
$f$  &  $\frac{1}{2}$  &  $\frac{1}{3}$\vspace{0.8ex}\\

${^{3}\D}_4(q)$    & $12f$   &  $12f$  &
$6f$  &  $[1,\frac{13}{12}]$  &  $[-\frac{1 }{6},0]$\vspace{0.8ex}\\

$\G_2(q)$, $q\geq3$    & $6f$   &  $6f$  &
$3f$  &  $[1,\frac{7}{6}]$  &  $[-\frac{1}{3},0]$\vspace{0.8ex}\\

${^{2}\G}_2(3^f)$, $f\geq3$ odd    & $3f$   &  $6f$  &
$2f$  &  $[\frac{1}{2},\frac{7}{12}]$  &  $[\frac{1}{3},\frac{1}{2}]$\vspace{0.8ex}\\

$\F_4(q)$    & $24f$   &  $12f$  &
$8f$  &  $[2,\frac{25}{12}]$  &  $[-\frac{7}{6},-1]$\vspace{0.8ex}\\

${^{2}\F}_4(q)$    & $12f$   &  $12f$  &
$6f$  &  $1$  &  $0$\vspace{0.8ex}\\

$\E_6(q)$    & $36f$   &  $12f$  &
$9f$  &  $[3,\frac{37}{12}]$  &  $[-\frac{13}{6},-2]$\vspace{0.8ex}\\

${^{2}\E}_6(q)$    & $36f$   &  $18f$  &
$12f$  &  $[2,\frac{37}{18}]$  &  $[-\frac{10}{9},-1]$\vspace{0.8ex}\\

$\E_7(q)$    & $63f$   &  $18f$  &
$14f$  &  $[\frac{7}{2},\frac{127}{36}]$  &  $[-\frac{23}{9},-\frac{5}{2}]$\vspace{0.8ex}\\

$\E_8(q)$    & $120f$   &  $30f$  &
$24f$  &  $[4,\frac{241}{60}]$  &  $[-\frac{91}{30},-3]$\\

\bottomrule
\end{tabular}
\label{TABLE002}
\end{table}

\begin{table}[htbp]
\centering
\caption{The Artin invariants for anomalous cases of Lie groups $G$}
\vspace{-1ex}
\begin{tabular}{lcccccc}
\toprule
\ \ \ \ $G$  & $r(G)$ & $\ell(G)$  &  $\omega(G)$
& $\psi(G)$  & $F_1(G)$  & $F_2(G)$ \\
\midrule

%\multicolumn{7}{l}{dominant prime $\neq p$ and $|G|>2^{16}$}\vspace{0.6ex}\\

$T=\PSL_{2}(2^\ell-1)$, $\ell>7$ & $2$  &  $\ell,\ell+1$ &
$\ell$ & $\ell-1$ &  $[1,1+\frac{1}{\ell}]$  & $[\ell-2-\frac{2}{\ell},\ell-2]$\vspace{0.6ex}\\

$T=\PSL_{2}(2^7-1)$ & $2$  &  $7,8$  &
$7$ & $3$  &  $1,\frac{8}{7}$  &  $-\frac{1}{4},-\frac{15}{28}$\vspace{0.6ex} \\

%\multicolumn{7}{l}{\ \ \ \ $\PSL_{2}(2^\ell-1)$ with $\ell\leq5$, then $|G|<2^{16}$ }\\

$T=\PSL_{2}(2^\ell+1)$, $\ell>5$ & $2$  &  $\ell,\ell+1$ &
$2\ell$ & $2(\ell-1)$ &  $[\frac{1}{2},\frac{1}{2}+\frac{1}{2\ell}]$  & $[\ell-1-\frac{1}{\ell},\ell-1]$\vspace{0.6ex} \\

%\multicolumn{7}{l}{\ \ \ \ $\PSL_{2}(2^\ell+1)$ with $\ell\leq4$, then $|G|<2^{16}$ }\\

$T=\PSL_{2}(2^f)$, $f>4$ & $2^f+1$  &  $1$  & $f$
& $\frac{f}{2}$  &  $\frac{1}{f}$   &  $\frac{2(f-1)}{f}$\vspace{0.6ex}\\

$\PGU_{3}(8).3$ & $3$  &  $6$ &
$18$ & $6$ & $\frac{1}{3}$   & $\frac{5}{6}$\vspace{0.6ex} \\

$\PGammaL_{2}(49)$ & $2$  & $6$  &
$4$ &  $3$ &  $\frac{3}{2}$  & $1$\vspace{0.6ex} \\

$\PGammaL_{2}(81)$ &  $2$ & $7$  &
$20$ & $4$ &  $\frac{1}{3}$  & $\frac{5}{9}$\vspace{0.6ex} \\

$\PGSp_{4}(3)$ & $2$  & $7$  &
$4$ & $2$ &  $\frac{7}{4}$  & $-\frac{3}{2}$\vspace{0.6ex} \\

$\PSU_{4}(3).\mathrm{D}_8$ & $2$  &  $10$ &
$4$ & $3$  &  $\frac{5}{2}$  & $-1$\\

\midrule

%\multicolumn{7}{l}{$\beta=2$ and $p$ a Mersenne prime }\vspace{0.6ex}\\

$T=\PSL_{2}(p^2)$, $p=2^\ell-1$ & $p$  & $2$ &
$4$  & $1$  &  $\frac{1}{2}$   & $\frac{1}{3}$\vspace{0.6ex}  \\

$T=\PSL_{3}(2^\ell-1)$ & $p$  & $3$ &
$3$  & $1$  &  $1$   &  $-\frac{1}{2}$\vspace{0.6ex} \\

$T=\PSp_{4}(2^\ell-1)$ & $p$  & $4$  &
$4$  & $1$  &  $1$   &  $-\frac{2}{3}$\vspace{0.6ex} \\

$T=\PSU_{3}(2^\ell-1)$, $\ell>2$ & $p$  & $3$  &
$6$  & $1$  &  $\frac{1}{2}$   &  $\frac{1}{5}$\\

\midrule

%\multicolumn{7}{l}{$(p,\alpha)=(2,6)$} \vspace{0.6ex} \\

$T=\PSL_{3}(4)$ & $2$  & $6,7,8$  &
$4$ & $3$ &  $\frac{3}{2},\frac{7}{4},2$  & $1,\frac{1}{2},0$\vspace{0.6ex}\\

$T=\PSL_{5}(2)$ & $2$  &  $10,11$ &
$5$ & $4$ &  $2,\frac{11}{5}$  & $1,\frac{3}{5}$\vspace{0.6ex}\\

$T=\PSp_{6}(2)$ & $2$  & $9$  &
$4$ & $3$  &  $\frac{9}{4}$  & $-\frac{1}{2}$\vspace{0.6ex}\\

$T=\POm_{8}^+(2)$ & $2$  & $12,13$  &
$4$ & $3$ & $3,\frac{13}{4}$ & $-2,-\frac{5}{2}$\\

%\ \ \ \ $G=\PSU_{4}(2)$ & $3$  & $4$  & $4$
% & $1$ & $1$ & \vspace{0.6ex} \\
%
%\ \ \ \ $G=\PSU_{4}(2).2$ &  $2$ &  $7$ & $4$
% & $2$ & $\frac{7}{4}$ &  \\

\midrule
%\multicolumn{7}{l}{\ \ \ \ \ $T\in\{\PSL_{2}(8)$, $\PSL_{3}(4)$, $\PSL_{5}(2)$, $\PSp_6(2)$, $\PSU_3(2)$, $\PSU_4(2)$, $\POm_8^+(2)\}$} \vspace{1.0ex} \\
%
%\multicolumn{7}{l}{$(p,\beta)=(2,6)$} \vspace{0.6ex}\\

$T=\PSL_{2}(64)$ & $2$  & $6,7$  &
$12$ & $4$  & $\frac{1}{2},\frac{4}{7}$ & $\frac{1}{2},\frac{1}{3}$\vspace{0.6ex} \\

$T=\PSL_{3}(8)$ & $2$ & $9,10$  &
$9$ & $3$ & $1,\frac{10}{9}$ & $-\frac{1}{2},-\frac{5}{7}$\vspace{0.6ex}\\

$T=\PSL_{4}(4)$ & $2$  &  $12,13,14$ &
$8$ & $4$ & $\frac{3}{2},\frac{13}{8},\frac{7}{4}$  & $-1,-\frac{5}{4},-\frac{3}{2}$\vspace{0.6ex} \\

$T=\PSL_{7}(2)$ & $2$ & $21,22$  &
$7$ & $5$  & $3,\frac{22}{7}$  & $-\frac{5}{2},-\frac{25}{9}$\vspace{0.6ex} \\

$T=\PSp_4(8)$ & $2$  & $12,13$ &
$12$ & $4$ &  $1,\frac{10}{9}$  & $-\frac{1}{2},-\frac{2}{3}$\vspace{0.6ex} \\

$T=\PSp_8(2)$ &  $2$ & $16$  &
$8$ & $4$  & $2$   & $2$\vspace{0.6ex}\\

$\PSU_3(8).[a]$, $a\mid3$ & $2$ & $9$  &
$18$ & $3$ &  $\frac{1}{2}$  & $\frac{1}{5}$\vspace{0.6ex}\\

$\PSU_3(8).[2a]$, $a\mid9$ & $2$ & $10$  &
$18$ & $3$ &  $\frac{5}{9}$  & $\frac{1}{9}$\vspace{0.6ex}\\

$T=\PSU_5(2)$ & $2$ & $10,11$  &
$10$ & $4$  &  $1,\frac{10}{9}$ & $-\frac{1}{3},-\frac{5}{9}$\vspace{0.6ex}\\

$T=\PSU_6(2)$ & $2$  & $15,16$  &
$10$ &  $4$ & $\frac{3}{2},\frac{8}{5}$ & $-\frac{4}{3},-\frac{14}{9}$\vspace{0.6ex}\\

$T=\POm_{8}^-(2)$ & $2$  & $12,13$  &
$8$ & $4$  &  $\frac{3}{2},\frac{13}{8}$ & $-1,-\frac{5}{4}$\vspace{0.6ex}\\

$T=\POm_{10}^+(2)$ & $2$  & $20,21$  &
$8$ & $5$ &  $\frac{5}{2},\frac{21}{8}$ & $-\frac{7}{3},-\frac{18}{7}$\vspace{0.6ex}\\

$T={^{3}\D}_4(2)$ & $2$  & $12$  &
$12$ & $3$  & $1$   &  $-\frac{2}{3}$\vspace{0.6ex}\\

$T=\G_2(4)$ &  $2$ & $12,13$  &
$12$ & $4$  &  $1,\frac{13}{3}$  & $-\frac{1}{2},-\frac{43}{6}$\vspace{0.6ex}\\

$T={^{2}\F}_4(2)'$ & $2$  & $11,12$  &
$12$ & $4$ &  $\frac{11}{12},1$  & $-\frac{1}{3},-\frac{1}{2}$\\

\bottomrule
\end{tabular}
\label{TABLE003}
\end{table}

\begin{proof}
For every simple group $T$ as listed in the first column of Table~\ref{TABLE002}, the quantities $h$, $\alpha$, $\beta$ from~\eqref{eq0.6} are provided in the next three columns of Table~\ref{TABLE002} (see~\cite[Table~A.1]{KLST1990}). In particular, both $\alpha$ and $\beta$ are greater than or equal to any prime factor of $|\Out(T)|/d$.

Assume that $r(G)\neq p$. Then, it follows from Propositions~\ref{PROP011} and~\ref{PROP012} that $G$ lies in Proposition~\ref{PROP011}. Moreover, in the case where $|G|>2^{16}$, namely, not covered by~\eqref{LEM013.3}, all possible $G$ and their Artin variants are listed in the first nine rows of Table~\ref{TABLE003}. Based on these, it is straightforward to verify that $F_1(G)$ and $F_2(G)$ are as described in Table~\ref{TABLE003} as well.

Assume that $r(G)=p$. If there is no primitive prime divisor of $(p,\alpha)$, then we deduce from Lemma~\ref{lem0.10} that either $p$ is a Mersenne prime and $\alpha=2$ or $(p,\alpha)=(2,6)$. The former case is excluded by the value of $\alpha$ in Table~\ref{TABLE002}, while for the latter case, all possible $G$ and their Artin variants (and hence the values of $F_1(G)$ and $F_2(G)$) are provided in Table~\ref{TABLE003}. Now suppose that $(p,\alpha)$ has a primitive prime divisor, and in particular, $\omega(G)\geq\alpha$. If $\omega(G)>\alpha$, then there is a primitive prime divisor $r$ of $(p,\omega(G))$ such that $r$ does not divide any $\Phi_m(p)$ in~\eqref{eq0.6}. This implies that $r$, with $r>\omega(G)>\alpha$, is a prime factor of $|\Out(T)|/d$, a contradiction. Hence, $\omega(G)=\alpha$ as claimed in~\eqref{LEM013.1}. The conclusion that $\psi(G)=\beta$ except for the groups described in Table~\ref{TABLE003} can be obtained through a similar argument.
%The quantities $\psi(G)=\beta$ are listed in Table~\ref{TABLE002}, while the remaining cases are listed in Table~\ref{TABLE003}.
To finish the proof, it remains to estimate $F_1(G)$ and $F_2(G)$ for $G$ in Table~\ref{TABLE002}. For these groups $G$, since $d$ and $p$ are coprime and $\ell(G)\leq\ell(T)+\log_p(|\Out(T)|_p)$, we deduce that
\[
\frac{h}{\alpha}=\frac{\ell(T)}{\omega(G)}\leq \frac{\ell(G)}{\omega(G)}=F_1(G)\leq\frac{h}{\alpha}+\frac{\log_p(|\Out(T)|_p)}{\alpha}
=\frac{h}{\alpha}+\frac{\log_p(|\Out(T)|/d)_p}{\alpha}.
\]
Take $h/\alpha$ as the lower bound of the interval for $F_1(G)$, and estimate $\log_p(|\Out(T)|/d)_p$ to derive an upper bound of $F_1(G)$. We then obtain the intervals for $F_1(G)$ and consequently for $F_2(G)$, as shown in Table~\ref{TABLE002}.
\end{proof}

\subsection{Proof of Proposition~\ref{THM004}}\label{subsec5.3}

Based on the results in Subsections~\ref{SUBSEC5.1} and~\ref{subsec5.2}, we are now ready to establish Proposition~\ref{THM004}.
%We are now ready to prove our first main result.
\begin{proof}[Proof of Proposition~$\ref{THM004}$]
Computation in \textsc{Magma}~\cite{BCP1997} shows that apart from the stated exceptions, there are no two almost simple groups with the same order less than $2^{16}$. In the following, assume $|H|=|K|>2^{16}$, and let $S$ and $T$ be the socle of $H$ and $K$, respectively. Recall the definition of the logarithmic proportion $\lambda$ in Subsection~\ref{subsec5.2}, as well as Definition~\ref{DEF002} for Artin invariants and functions $F_1$ and $F_2$. The assumption $|H|=|K|$ implies that $H$ and $K$ share the same logarithmic proportion and Artin invariants.

Assume that $H$ has alternating socle $\Al_n$. As $|H|>2^{16}$, we have $n\geq9$. If $K$ has alternating or sporadic socle, then it is clear (by the list of sporadic simple groups) that $S$ and $T$ must be isomorphic. So suppose that $K$ is of Lie type. If $n>20$, we derive from Lemma~\ref{LEM008} that
\[
2^{60}<|H|=|K|\leq|T|\cdot|\Out(T)|\leq|T|\cdot\log_2|T|
\]
and so $|T|>2^{54}$. Then it follows from Lemmas~\ref{LEM010} and~\ref{LEM007} that $H$ and $K$ cannot have the same order since $\lambda(H)\neq\lambda(K)$. When $9\leq n\leq20$, the Artin invariants for all possibilities of $H$ are lists in Table~\ref{TABLE005} (the Artin invariants of small alternating groups from~\cite[Table~A.4]{KLST1990} could help to obtain our table). Examining Tables~\ref{TABLE002} and~\ref{TABLE003} for the Artin invariants of $K$, as claimed in Proposition~\ref{LEM013}, we conclude that
\[
(S,T)\in\{(\Al_9,\PSL_3(4)),\,(\Al_{10},\PSL_3(4)),\,(\Al_{20},\PSL_3(64)),\,(\Al_{20},\PSU_4(8))\}
\]
are the only cases where $H$ and $K$ share the same Artin invariants, but none satisfies $|H|=|K|$.

\begin{table}[htbp]
\centering
\caption{Artin invariants of small alternating and symmetric groups}
\vspace{-1ex}
\begin{tabular}{lcccccc}
\toprule
$H$ & $r(H)$ & $\ell(H)$ & $\omega(H)$ & $\psi(H)$ & $F_1(H)$ & $F_2(H)$\\
\midrule

$\Al_9$ & $3$   & $4$   &  $6$  &
$4$  &  $\frac{1}{3}$  &  $\frac{7}{3}$\vspace{0.6ex}\\

$\Sy_9$, $\Al_{10}$ & $2$    & $7$   &  $4$  &
$3$  &  $\frac{7}{4}$  &  $\frac{1}{2}$\vspace{0.6ex}\\

$\Sy_{10}$ & $2$   & $8$   &  $4$  &
$3$  &  $2$  &  $0$\vspace{0.6ex}\\

$\Al_{11}$ & $2$   & $7$   &  $10$  &
$4$  &  $\frac{7}{10}$  &  $\frac{4}{15}$\vspace{0.6ex}\\

$\Sy_{11}$ & $2$   & $8$   &  $10$  &
$4$  &  $\frac{4}{5}$  &  $\frac{1}{15}$\vspace{0.6ex}\\

$\Al_{12}$ & $2$   & $9$   &  $10$  &
$4$  &  $\frac{9}{10}$  &  $-\frac{2}{15}$\vspace{0.6ex}\\

$\Sy_{12}$ & $2$   & $10$   &  $10$  &
$4$  &  $1$  &  $-\frac{1}{3}$\vspace{0.6ex}\\

$\Al_n,\Sy_n$ with $13\leq n\leq19$ & $2$   &  $[9,16]$   &  $12,18$   &
$10,12$  &   $[\frac{3}{4},\frac{4}{3}]$  & $[\frac{10}{9},\frac{9}{2}]$\vspace{0.6ex}\\

$\Al_{20}$ & $2$   & $17$   &  $18$  &
$12$  &  $\frac{17}{18}$  &  $\frac{10}{9}$\vspace{0.6ex}\\

$\Sy_{20}$ & $2$   & $18$   &  $18$  &
$12$  &  $1$  &  $1$\\

\bottomrule
\end{tabular}
\label{TABLE005}
\end{table}

Assume that $H$ is of Lie type and that $K$ has sporadic socle $T$. One can then obtain the Artin invariants as well as $F_1(K)$ and $F_2(K)$ for each $K$ through a direct calculation (the Artin invariants of $T$, listed in Table~\cite[Table~A.5]{KLST1990}, might be helpful). Similarly, by comparing Artin invariants and the values of the functions $F_1$ and $F_2$ of $K$ with those of $H$, listed in Tables~\ref{TABLE002} and~\ref{TABLE003}, we conclude that the only possible pairs $(S,T)$ are $(\PSL_3(4),\J_2)$ and $(\PSU_5(2),\HS)$. However, neither pair satisfies $|H|=|K|$.
%$\lambda(H)>0.3$ by Lemma~\ref{LEM007}, and so (note $|S|>2^{64}$)
%\[
%0.3<\lambda(H)=\lambda(K)=\frac{\log Q(K)}{\log|K|}\leq\frac{\log2Q(S)}{\log|S|}
%=\frac{\log Q(S)}{\log|S|}+\frac{\log2}{\log|S|}<\lambda(S)+0.016.
%\]
%According to the list~\cite[Table~L.5]{KLST1990} of logarithmic proportions $\lambda(S)$, the only possibilities that satisfy $|K|>2^{65}$ and the above inequality are $S=Fi_{24}'$ or $F_2$. These two cases can be excluded by checking Tables~\ref{TABLE002},~\ref{TABLE003} and~\cite[Table~A.5]{KLST1990}, as $F_2(H)$ and $F_2(K)=F_2(S)$ cannot be the same.

Clearly, the orders of $H$ and $K$ cannot be equal when both $S$ and $T$ are sporadic groups. To complete the proof, suppose that both $H$ and $K$ are of Lie type. Then the Artin invariants and values of $F_1$ and $F_2$ on $H$ and $K$ are listed in Table~\ref{TABLE002} or~\ref{TABLE003}. By careful examination of these values, we present in Table~\ref{TABLE004} the tuples $(X_1,X_2,\ldots)$ such that there exist almost simple groups of Lie type sharing the same Artin invariants with socles $X_1,X_2,\ldots$ and $|X_1|\geq|X_2|\geq\cdots$. Moreover, the $i$-th entry in the last column of Table~\ref{TABLE004} gives a lower bound for $|X_i|/|X_{i+1}|$.
%(except for the exceptions in the theorem and the case of ${^{3}D}_4(q)$).
Direct computation shows that every lower bound of $|X_i|/|X_{i+1}|$ is greater than $|\Out(X_{i+1})|$, except when $\{X_1,X_2\}$ is equal to one of:
\[
\{\PSp_{2n}(q),\POm_{2n+1}(q)\},\ \{\PSL_3(q^2),\PSU_4(q)\},\ \{\PSp_4(q^3),{^{3}\D}_4(q)\},\ \{\G_2(q^2),{^{3}\D}_4(q)\},
\]
%This implies that $S$ and $T$ must be the same if $\{T,S\}\neq\{\PSp_{2n}(q),\POm_{2n+1}(q)\}$, $\{\PSp_4(q^3),{^{3}D}_4(q)\}$ or $\{G_2(q^2),{^{3}D}_4(q)\}$.
where $\{\PSL_3(q^2),\PSU_4(q)\}$ satisfies $q\leq16$. However, none of the last three cases can yield two almost simple groups of the same order, either due to a straightforward calculation or the existence of primitive prime divisor of $(q,6)$. We conclude that $\{S,T\}=\{\PSp_{2n}(q),\POm_{2n+1}(q)\}$, as required by the proposition.
\end{proof}

\vspace{-1ex}
\begin{table}[htbp]
\centering
\caption{Almost simple groups of Lie type with the same Artin invariants}
\vspace{-1ex}
\begin{tabular}{ll@{\hskip -10pt}r}
\toprule
$(F_1(X_i),F_2(X_i))$ & tuple $(X_1,X_2,\ldots)$ of socles & $|X_i|/|X_{i+1}|>$ \vspace{0ex}\\
\midrule

\ \ $(1,1)$ & $(\PSL_3(q^2),\PSU_4(q))$ &  $q/(3,q^2-1)$
\vspace{0.5ex} \\

\ \ $(n/2,1)$ & $(\PSL_n(q^2),\POm_{2n}^-(q))$, $n\geq4$ & $q^{n-2}/(n,q^2-1)$
\vspace{0.5ex} \\

\ \ $(n/2,0)$ & $(\PSp_{2n}(q),\POm_{2n+1}(q))$, $n\geq3$ &
\vspace{0.5ex} \\

\ \ $(1,0)$& $(\PSp_4(q^3),\G_2(q^2),{^{3}\D}_4(q),{^{2}\F}_4(q))$ & $q^2/2$,\ ---\;,\ $q$
\vspace{0.5ex} \\

\ \ $(1/2,1/2)$ & $(\PSU_3(q^2),{^{2}\BB}_2(q^3))$, $(\PSU_3(q),{^{2}\G}_2(q))$ & $|\Aut(X_2)|$
\vspace{0.5ex} \\

\ \ $(2,-1)$& $(\POm_8^+(q^2),\F_4(q))$, $(\POm_8^+(q^3),{^{2}\E}_6(q))$, $({^{2}\E}_6(q^2),\F_4(q^3))$ & $q^3/4$
\vspace{0.5ex} \\

\ \ $(3/2,1)$& $(\PSL_2(49),\PSL_3(4),\PSL_4(2))$ & ---, ---
\vspace{0.5ex} \\

\ \ $(1/2,1/2)$& $(\PSL_2(64),\PSU_3(4))$ & 4
\vspace{0.5ex} \\

\ \ $(3/2,-1)$& $(\PSL_4(4),\POm_8^-(2))$ & 4
\vspace{0.5ex}\\

\ \ $(1,-1/2)$& $(\PSp_4(8),\G_2(4))$ & 2
\vspace{0ex}\\

\bottomrule
\end{tabular}
\label{TABLE004}
\end{table}

\section{Primitive permutation representations of $\Al_n$ and $\Sy_n$ of the same degree}\label{SEC3}
%\section{Maximal subgroups of $\Al_n$ and $\Sy_n$ of the same order}\label{SEC3}

The aim of this section is to establish Proposition~\ref{THM002}. We prove Proposition~\ref{THM002} in Subsection~\ref{SUBSEC3.4} after preparations in Subsections~\ref{SUBSEC3.1} and~\ref{SUBSEC3.2}. Recall that for an integer $n$, its $p$-part $n_p$ is the largest power of $p$ dividing $n$. The \emph{$p$-adic valuation} $v_p(n)$ of $n$ is defined as the exponent of $p$ in $n_p$, namely, $v_p(n)=\log_p(n_p)$.

\subsection{Number-theoretic results}\label{SUBSEC3.1}

The first lemma of this subsection is Bertrand's postulate, and we refer to~\cite[Charpter~2]{TheBook} for a proof.
%\begin{lemma}[{Bertrand's postulate, see, for example,~\cite[Charpter~2]{TheBook}}]\label{lem0.4}
\begin{lemma}\label{lem0.4}
For every integer $n\geq3$, there exists a prime $p$ such that $n/2<p<n$.
\end{lemma}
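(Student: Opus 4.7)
The plan is to follow the classical Erdős-style proof via the central binomial coefficient. First I would reformulate the statement into its more familiar form: for every integer $m\geq 1$, there exists a prime $p$ with $m<p\leq 2m$. The lemma follows by taking $m=\lfloor n/2\rfloor\geq 1$; one checks that the prime $p\in(m,2m]$ produced satisfies $p<n$, since if $n$ is even then $p=2m=n$ would force $n$ to be an even prime greater than $2$, impossible, and if $n$ is odd then $2m=n-1<n$.

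The core tool is the central binomial coefficient $N=\binom{2m}{m}$. By Legendre's formula,
\[
v_p(N)=\sum_{k\geq1}\left(\left\lfloor\frac{2m}{p^k}\right\rfloor-2\left\lfloor\frac{m}{p^k}\right\rfloor\right),
\]
where each summand lies in $\{0,1\}$ and vanishes once $p^k>2m$; hence $p^{v_p(N)}\leq 2m$ for every prime $p$. A direct check shows further that $v_p(N)=0$ whenever $2m/3<p\leq m$ (for such $p$ one has $\lfloor 2m/p\rfloor=2$ and $\lfloor m/p\rfloor=1$, while all higher powers of $p$ exceed $2m$ for $m$ not too small).

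Suppose for contradiction that no prime lies in $(m,2m]$. Then every prime divisor of $N$ lies in $[2,2m/3]$, and splitting at $\sqrt{2m}$ gives
\[
N\leq\prod_{p\leq\sqrt{2m}}(2m)\cdot\prod_{\sqrt{2m}<p\leq 2m/3}p\leq(2m)^{\sqrt{2m}}\prod_{p\leq 2m/3}p.
\]
The second factor is handled by the auxiliary Chebyshev-type estimate $\prod_{p\leq x}p\leq 4^x$, proved by a short independent induction using $\binom{2k+1}{k}\leq 4^k$ and the fact that every prime in $(k+1,2k+1]$ divides $\binom{2k+1}{k}$. Combined with the easy lower bound $N\geq 4^m/(2m+1)$ (coming from $4^m=\tfrac{1}{2}\sum_{k=0}^{2m}\binom{2m}{k}\leq\tfrac{1}{2}(2m+1)N$, in fact $4^m\leq(2m+1)N$), this yields
\[
4^{m/3}\leq(2m+1)(2m)^{\sqrt{2m}},
\]
whose left side grows linearly and right side like $\sqrt{m}\log m$, so the inequality fails once $m$ exceeds an explicit threshold. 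The finitely many remaining values of $m$ are dispatched by exhibiting a short chain of primes $2,3,5,7,13,23,43,83,163,317,631,\ldots$ in which each term is less than twice its predecessor.

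The main obstacle will be the auxiliary bound $\prod_{p\leq x}p\leq 4^x$, which is really a miniature Bertrand-style counting argument itself; care is required so that its inductive proof remains genuinely independent of the statement being proved. Once that estimate is in place, the rest is a matter of cleanly partitioning the primes dividing $N$ into the small, medium, and (hypothetically empty) large ranges and comparing the resulting upper bound with the lower bound on $\binom{2m}{m}$.
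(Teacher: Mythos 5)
The paper does not prove this lemma itself but simply cites Chapter~2 of Aigner and Ziegler's \emph{Proofs from the Book}, which presents exactly the Erd\H{o}s argument you outline: bound $\binom{2m}{m}$ from below by $4^m/(2m+1)$ and from above via Legendre's formula, the vanishing of $v_p$ on $(2m/3,m]$, and the Chebyshev estimate $\prod_{p\le x}p\le 4^x$, then dispatch small $m$ with a doubling chain of primes. Your plan is therefore correct and essentially identical to the (cited) proof; the only blemishes are cosmetic --- the parenthetical ``$4^m=\tfrac{1}{2}\sum_{k}\binom{2m}{k}$'' carries a stray factor of $\tfrac{1}{2}$ (the inequality $4^m\le(2m+1)\binom{2m}{m}$ you actually use is correct), and ``left side grows linearly'' should refer to the logarithm of $4^{m/3}$ rather than to $4^{m/3}$ itself.
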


The following lemma, known as Stirling's Formula~\cite{Robbins1955}, provides bounds for the factorial.
%\begin{lemma}[Stirling's Formula, see, for example,~\cite{Robbins1955}]\label{lem0.2}
\begin{lemma}\label{lem0.2}
Let $x$ be a positive integer. Then the factorial $x!$ satisfies
\[
\sqrt{2\pi x}\left(\frac{x}{e}\right)^x <x!<\sqrt{2\pi x}\left(\frac{x}{e}\right)^x e^{\frac{1}{12x}}.
\]
\end{lemma}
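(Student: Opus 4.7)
The plan is to follow Robbins' classical argument by studying the auxiliary sequence $d_n = \log(n!) - (n+\tfrac12)\log n + n$, for which the desired inequalities are equivalent to $\log\sqrt{2\pi} < d_n < \log\sqrt{2\pi} + 1/(12n)$ for every positive integer $n$.

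First I would establish sharp estimates on the consecutive differences. A direct computation gives $d_n - d_{n+1} = (n+\tfrac12)\log\frac{n+1}{n} - 1$; writing $(n+1)/n = (1+t)/(1-t)$ with $t = 1/(2n+1)$ and expanding via the series $\log\frac{1+t}{1-t} = 2\sum_{k \ge 0} t^{2k+1}/(2k+1)$ yields
\[
d_n - d_{n+1} = \sum_{k\ge1}\frac{1}{(2k+1)(2n+1)^{2k}}.
\]
Dominating this positive series termwise by the geometric series of ratio $(2n+1)^{-2}$ multiplied by $1/3$ gives
\[
0 < d_n - d_{n+1} < \frac{1}{3((2n+1)^2-1)} = \frac{1}{12n}-\frac{1}{12(n+1)}.
\]
The left inequality shows that $(d_n)$ is strictly decreasing, while the right inequality shows that $(d_n - 1/(12n))$ is strictly increasing. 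Both sequences therefore converge to a common limit $C$ satisfying $C < d_n$ and $C > d_n - 1/(12n)$ for every $n \ge 1$.

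The main obstacle is pinning down the constant $C$. The standard route is to invoke Wallis' product $\lim_{n\to\infty} \frac{2^{4n}(n!)^4}{((2n)!)^2(2n+1)} = \frac{\pi}{2}$, itself a consequence of iterating the reduction formula for $\int_0^{\pi/2}\sin^k x\,dx$. Substituting the asymptotic $n! \sim e^{C} n^{n+1/2}e^{-n}$ (together with the analogous expression for $(2n)!$) into Wallis' identity and letting $n \to \infty$ produces $e^{2C}/4 = \pi/2$, whence $C = \log\sqrt{2\pi}$.

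Exponentiating the strict two-sided bound $\log\sqrt{2\pi} < d_n < \log\sqrt{2\pi} + 1/(12n)$ then recovers the stated inequalities after the cosmetic renaming of $n$ to $x$. The whole argument is essentially that of Robbins, and since the paper already cites that source the only work is to reproduce the steps above.
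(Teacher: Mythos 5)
Your argument is a correct and complete reproduction of Robbins' proof, which is exactly the source the paper cites for this lemma (the paper itself gives no proof, only the reference to Robbins 1955). All the steps check out: the telescoping bound $0 < d_n - d_{n+1} < \frac{1}{12n} - \frac{1}{12(n+1)}$, the monotonicity of $d_n$ and $d_n - \frac{1}{12n}$, and the identification of the limit via Wallis' product are each carried out correctly.
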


With the aid of Lemma~\ref{lem0.2}, we obtain the following lemma.

\begin{lemma}\label{LEM011}
For a set of size $n$, the number of its subsets with a given size is at most $2^{n}/\sqrt{n}$.
\end{lemma}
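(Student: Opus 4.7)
The plan is to reduce to bounding the central binomial coefficient, since $\binom{n}{k}$ is maximized at $k=\lfloor n/2\rfloor$, and then apply Stirling's formula (Lemma~\ref{lem0.2}) to that coefficient. So it suffices to show $\binom{n}{\lfloor n/2\rfloor}\leq 2^{n}/\sqrt{n}$.

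First I would handle the even case $n=2m$. Writing $\binom{2m}{m}=(2m)!/(m!)^{2}$ and applying the upper bound for $(2m)!$ and the lower bound for $m!$ from Lemma~\ref{lem0.2}, I obtain
\[
\binom{2m}{m}<\frac{\sqrt{4\pi m}\,(2m/e)^{2m}e^{1/(24m)}}{2\pi m\,(m/e)^{2m}}=\frac{2^{2m}e^{1/(24m)}}{\sqrt{\pi m}}.
\]
The desired inequality $\binom{2m}{m}\leq 2^{2m}/\sqrt{2m}$ then reduces to $e^{1/(24m)}\leq\sqrt{\pi/2}$, which holds for every $m\geq 1$ since $e^{1/24}\approx 1.042<1.253\approx\sqrt{\pi/2}$.

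For the odd case $n=2m+1$, I would use the identity $\binom{2m+1}{m}=\tfrac{1}{2}\binom{2m+2}{m+1}$, which follows from $\binom{2m+1}{m}=\binom{2m+1}{m+1}$ and Pascal's identity. Applying the even-case estimate to $\binom{2m+2}{m+1}$ gives
\[
\binom{2m+1}{m}\leq\frac{2^{2m+1}e^{1/(24(m+1))}}{\sqrt{\pi(m+1)}},
\]
and the target bound reduces to $e^{1/(24(m+1))}\leq\sqrt{\pi(m+1)/(2m+1)}$. Since $\pi(m+1)/(2m+1)\geq 2\pi/3$ for $m\geq 1$, the right-hand side is at least $\sqrt{2\pi/3}\approx 1.447$, while the left-hand side is at most $e^{1/48}\approx 1.021$; the case $m=0$ is checked directly ($\binom{1}{0}=1\leq 2$).

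There is no real obstacle here; the only subtlety is choosing the correct direction of each Stirling inequality and verifying that the slack in $\sqrt{\pi/2}$ versus $\sqrt{2}$ absorbs the $e^{1/(24m)}$ factor. A small check of the boundary cases $n\in\{1,2\}$ rounds out the proof.
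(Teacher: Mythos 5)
Your proof is correct and follows essentially the same route as the paper's: reduce to the central binomial coefficient, bound it via Stirling (Lemma~\ref{lem0.2}) in the even case, and pass the odd case to an adjacent even one (the paper steps down from $n$ to $n-1$ where you step up to $n+1$; the difference is immaterial). One small slip: $(m+1)/(2m+1)$ is \emph{decreasing} in $m$, so your claimed bound $\pi(m+1)/(2m+1)\geq 2\pi/3$ fails for $m\geq2$; the correct infimum is $\pi/2$, and since $\sqrt{\pi/2}\approx1.253$ still exceeds $e^{1/48}\approx1.021$ the argument goes through unchanged.
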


\begin{proof}
Clearly, the lemma holds true for $n=1$, and so we assume $n\geq2$. According to Lemma~\ref{lem0.2}, for any positive integer $x$, the factorial $x!$ satisfies
\[
\sqrt{2\pi x}\left(\frac{x}{e}\right)^x <x!<\sqrt{2\pi x}\left(\frac{x}{e}\right)^x e^{\frac{1}{12x}}.
\]
Hence,
\[
\binom{2x}{x}=\frac{(2x)!}{x!\cdot x!}<
       e^{\frac{1}{24x}}
\frac  {\sqrt{4\pi x}\cdot (2x)^{2x}}
       {\big(\sqrt{2\pi x} \cdot x^{x}\big)^2}=\frac{e^{\frac{1}{24x}}}{\sqrt{\pi}}\cdot \frac{2^{2x}}{\sqrt{x}}
       < \frac{2^{2x}}{\sqrt{2x}}.
\]
Note that the maximum binomial coefficient in $\binom{n}{0},\ldots,\binom{n}{n}$ is $\binom{n}{\lfloor n/2\rfloor}$. If $n$ is even, the conclusion immediately follows from the above inequality by taking $2x$ as $n$. If $n$ is odd, we also obtain
\[
\binom{n}{\lfloor n/2\rfloor}=\frac{n}{(n+1)/2}\binom{n-1}{(n-1)/2}
<\frac{2n}{n+1}\cdot\frac{2^{n-1}}{\sqrt{n-1}}<\frac{2^{n}}{\sqrt{n}},
\]
as the lemma asserts.
\end{proof}

We also need the following result on distribution of primes (see, for example,~\cite[(3.6)]{RS1962}).

\begin{lemma}\label{lem0.3}
Let $n$ be a real number greater than $1$. Then the number of primes less than or equal to $n$ is smaller than $1.25506n/\ln n$.
\end{lemma}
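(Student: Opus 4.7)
The statement is the classical explicit bound of Rosser and Schoenfeld on the prime counting function $\pi(n)$. The plan is to combine an elementary Chebyshev-type upper bound on $\vartheta(x) = \sum_{p \le x} \log p$ with Abel summation to transfer the bound to $\pi(x)$, and then either sharpen the implicit constant using analytic input from $\zeta(s)$ or finish by explicit computation over an initial range.

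First I would derive $\vartheta(2k) - \vartheta(k) \le 2k \log 2$ from the observation that $\prod_{k < p \le 2k} p$ divides $\binom{2k}{k} < 4^k$. Telescoping $\vartheta(x) = \sum_{j \ge 0} \bigl(\vartheta(x/2^{j}) - \vartheta(x/2^{j+1})\bigr)$ then yields $\vartheta(x) \le (2\log 2)\, x$ for all $x \ge 1$, which is a Chebyshev-type estimate with explicit constant $2\log 2 \approx 1.386$.

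Next, to pass from $\vartheta$ to $\pi$, I would invoke Abel summation in the form
\[
\pi(x) \;=\; \frac{\vartheta(x)}{\log x} \;+\; \int_{2}^{x} \frac{\vartheta(t)}{t \log^{2} t}\, dt.
\]
Substituting the Chebyshev bound gives $\pi(x) \le (2\log 2)\cdot x/\log x + O(x/\log^{2} x)$, so a bound of the shape $C\, x/\log x$ is immediately within reach for an explicit constant $C$ slightly larger than $1.25506$, and one already recovers a clean estimate uniformly in $x$.

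The main obstacle is to sharpen the constant from roughly $1.386$ down to $1.25506$. This is exactly where the original Rosser--Schoenfeld argument becomes non-elementary: one uses the explicit formula for $\psi(x) = \sum_{p^{m} \le x} \log p$ in terms of the non-trivial zeros of $\zeta(s)$, together with a numerically verified zero-free region of the form $\{\,\mathrm{Re}(s) > 1 - c/\log(|\mathrm{Im}(s)|+2)\,\}$, to obtain $|\psi(x)-x| \le \varepsilon(x)$ with a very small $\varepsilon(x)$ once $x$ is past some computable threshold $X_{0}$. For $1 < x \le X_{0}$, where the analytic estimate is still too weak, the bound must be checked by directly computing $\pi(x)$. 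The numerical value $1.25506$ is, up to rounding, the maximum of $\pi(x)\log x / x$ over all real $x > 1$ (attained at a small prime), so the genuinely delicate part is certifying that this ratio never again exceeds that maximum once $x$ becomes large; this is what forces the use of zero-free-region information rather than purely elementary estimates.
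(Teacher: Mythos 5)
The paper does not actually prove this lemma: it is quoted verbatim from Rosser and Schoenfeld \cite[(3.6)]{RS1962}, so there is no internal proof to compare your attempt against. Judged on its own terms, your outline is an accurate description of how the result is established in the literature. The elementary part is fine: the divisibility of $\binom{2k}{k}$ by $\prod_{k<p\le 2k}p$, the telescoping to $\vartheta(x)\le (2\log 2)x$, and the partial-summation identity $\pi(x)=\vartheta(x)/\log x+\int_2^x \vartheta(t)\,t^{-1}(\log t)^{-2}\,dt$ are all correct and do yield an explicit bound $C\,x/\log x$ with $C$ close to $2\log 2\approx 1.386$. You also correctly identify that $1.25506$ is (up to rounding) the maximum of $\pi(x)\log x/x$ over $x>1$ (attained at $x=113$), which explains why the constant cannot be improved and why the bound must be checked directly on an initial segment.

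The genuine gap is that the decisive step --- lowering the constant from $1.386$ to $1.25506$ --- is only named, not carried out. Your proposal defers entirely to ``the original Rosser--Schoenfeld argument'': the explicit formula for $\psi(x)$ over zeros of $\zeta$, a numerically certified zero-free region, the resulting bound $|\psi(x)-x|\le\varepsilon(x)$ past a threshold $X_0$, and the finite verification below $X_0$. None of these is executed or even reduced to a precise quantitative claim, and no elementary shortcut is offered in their place (none is known: any constant below $4\log 2/\,2\approx 1.386$ obtained this way already requires nontrivial analytic input). So as a proof the proposal is incomplete exactly where the difficulty lies. For the purposes of this paper the authors' choice to cite the result is the right one, and a referee would expect the same of you; if you want a self-contained statement, the honest options are either to cite \cite{RS1962} as the paper does, or to prove only the weaker elementary bound $\pi(n)<(2\log 2+o(1))\,n/\ln n$ and check whether that suffices for the application in Lemma~\ref{lem0.1} (it does not without reworking the numerical constants there).
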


The Diophantine equation $\binom{n+k}{k}=2\binom{n}{k}$ arises as an crucial case in the proof of Proposition~\ref{THM002}. We show that this equation only has the trivial solution $n=k=1$.

\begin{lemma}\label{lem0.1}
There equation $\binom{n+k}{k}=2\binom{n}{k}$ has no positive integer solutions with $\{n,k\}\neq\{1\}$.
\end{lemma}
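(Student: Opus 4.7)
My plan is to reduce to $n>k\geq 2$ and then combine modular information with a $p$-adic valuation argument. First I dispose of the trivial cases: $\binom{n}{k}>0$ forces $n\geq k$; the boundary $n=k$ would give $\binom{2k}{k}=2$, which fails for $k\geq 2$ since $\binom{2k}{k}\geq 6$; and $k=1$ yields $n+1=2n$, i.e.\ $n=1=k$, the excluded solution. This reduces the equation to
\[
(n+1)(n+2)\cdots(n+k)=2\,n(n-1)\cdots(n-k+1), \qquad n>k\geq 2.
\]

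Next I will extract divisibility constraints by reducing this identity mod $n+i$. The most useful are $n\mid k!$ (reducing mod $n$, where the left side becomes $k!$ and the right vanishes) and $(n+1)\mid 2k!$ (reducing mod $n+1$, using $n-j\equiv -(j+1)$). In parallel, writing
\[
\frac{\binom{n+k}{k}}{\binom{n}{k}}=\prod_{i=1}^k\Bigl(1+\frac{k}{n-k+i}\Bigr)
\]
and using monotonicity of the factors in $i$ pins $n$ into the interval
\[
\frac{k}{2^{1/k}-1}\leq n\leq k-1+\frac{k}{2^{1/k}-1},
\]
a range of length $k-1$ located near $k^2/\ln 2\approx 1.44\,k^2$.

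The key structural step is a $p$-adic valuation argument. For any odd prime $p>2k$, at most one of the $2k$ consecutive integers $n-k+1,\ldots,n+k$ is divisible by $p$, so if any of them is divisible by $p$ then the $p$-adic valuations of the two sides of the equation disagree (the factor of $2$ contributes nothing for odd $p$). Hence every prime factor of every $n+i$ with $-k<i\leq k$ is at most $2k$. For small $k$ this, together with $n\mid k!$ and the interval above, leaves only a handful of candidates, which are knocked out by the secondary constraint $(n+1)\mid 2k!$; for instance at $k=4$ the interval forces $n\in\{22,23,24\}$, the divisibility $n\mid 24$ isolates $n=24$, and then $25\nmid 48$ finishes. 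Similar case-by-case eliminations should handle $k\leq k_0$ for some small threshold $k_0$.

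The main obstacle I expect is the large-$k$ regime, because $n\sim k^2$ places the interval of smooth numbers well beyond the reach of Bertrand's postulate alone. My plan for this range is to compare the lower bound $(n-k+1)\cdots(n+k)\geq (k^2/2)^{2k}$ with the maximal size of a $(2k)$-smooth product assembled from the Legendre formula, bounded using Stirling's inequalities (Lemma~\ref{lem0.2}) and the prime-counting estimate (Lemma~\ref{lem0.3}); alternatively, a Sylvester--Schur / Erd\H{o}s style refinement of Bertrand's postulate (Lemma~\ref{lem0.4}) would force $\binom{n+k}{k}$ to have a prime factor $>2k$, contradicting the smoothness already established. Bridging the small-$k$ enumeration and the large-$k$ asymptotic contradiction cleanly is the delicate point that the proof will have to handle.
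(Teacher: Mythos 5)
Your outline follows essentially the same route as the paper's proof: the monotonicity bound on $\prod_{i=1}^k\bigl(1+\tfrac{k}{n-k+i}\bigr)$ pinning $n$ in an interval of size $O(k)$ near $k^2/\ln 2$ (the paper uses the slightly cruder $k^2<n<2k^2$), the observation that a prime $p>2k$ can divide at most one of the $2k$ consecutive integers $n-k+1,\dots,n+k$ and hence would unbalance the $p$-adic valuations of the two sides, and then, for large $k$, the comparison of a Stirling lower bound for $\binom{n}{k}$ against an upper bound for its $2k$-smooth part obtained from Legendre's formula ($v_p\binom{n}{k}\le\log_p n$) and the Rosser--Schoenfeld prime-counting estimate. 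The congruences $n\mid k!$ and $(n+1)\mid 2\,k!$ are a genuine addition not used in the paper and they do make individual small $k$ (such as your $k=4$ case) pleasant to kill by hand.

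The one genuine gap is exactly the bridge you flag at the end. When you carry out the large-$k$ computation quantitatively, the threshold you get is not small: the paper's version of the argument only produces a contradiction for $k\ge 152$, and it disposes of $2\le k\le 151$ (hence $n<2k^2<46208$) by a direct computer search rather than by hand. Your proposed hand elimination via $n\mid k!$, $(n+1)\mid 2\,k!$ and the length-$(k-1)$ interval is plausible for each fixed $k$ but gives no uniform argument, and running it for all $k$ up to roughly $150$ is not realistically a pen-and-paper task; some finite computation (or a sharper effective bound in the asymptotic step) is needed to close the range. The alternative you mention --- invoking a Sylvester--Schur type theorem to produce a prime factor $>2k$ of $\binom{n+k}{k}$ --- would need a refinement beyond the classical statement (which only yields a prime $>k$), so it does not substitute for the quantitative smoothness estimate. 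With the threshold made explicit and the finite range checked, your argument is the paper's proof.
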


\begin{proof}
The lemma clearly holds when $k=1$. So we let $k\geq2$, and let
\begin{equation}\label{eq0.1}
f(n,k)=\frac{\binom{n+k}{k}}{\binom{n}{k}}=\frac{(n+k)(n+k-1)\cdots(n+1)}{n(n-1)\cdots(n-k+1)}.
\end{equation}
Suppose for a contradiction that there exist some integers $n$ and $k$ satisfying $f(n,k)=2$.

Since $k\geq2$, we derive the following bounds for $f(n,k)$ from~\eqref{eq0.1}:
\[
\left(\frac{n+k}{n}\right)^k<f(n,k)< \left(\frac{n+1}{n-k+1}\right)^k.
\]
Substituting $f(n,k)=2$, we obtain $(n+k)/n<2^{1/k}<(n+1)/(n-k+1)$, and so
\begin{equation*}
\frac{k}{2^{\frac{1}{k}}-1}<n<k\left(\frac{1}{2^{\frac{1}{k}}-1}+1\right)-1.
\end{equation*}
Note that $k<1/(2^{\frac{1}{k}}-1)<2k-1$ for $k\geq2$. We then conclude
\begin{equation}\label{eq0.3}
k^2<n<2k^2.
\end{equation}
For the case $k<152$, as $n<2k^2< 46208$, it can be checked by computer that the only pair $(n,k)$ such that $f(n,k)=2$ is $(n,k)=(1,1)$. In the rest of the proof, assume that $k\geq 152$.

If there exists a prime divisor $p$ of $n(n-1)\cdots(n-k+1)$ with $p>2k$, then none of the integers $n+1,n+2,\ldots,n+k$ is divisible by $p$, which contradicts $f(n,k)=2$ by~\eqref{eq0.1}. Thus, since $n(n-1)\cdots(n-k+1)$ is divisible by $\binom{n}{k}$, we derive that no prime factor of $\binom{n}{k}$ is greater than $2k$. As a consequence,
\[
\binom{n}{k}=\prod_{p\leq 2k} {\binom{n}{k}}_p.
\]
By Legendre's formula~(see, for example,~\cite[Theorem~2.6.7]{Moll2012}),
\begin{align*}
v_p\left({\binom{n}{k}}\right)&=v_p(n!)-v_p(k!)-v_p((n-k)!)
\\
&=\sum_{j=1}^{\lfloor\log_pn\rfloor}\left(\left\lfloor\frac{n}{p^j}\right\rfloor-\left\lfloor\frac{k}{p^j}\right\rfloor- \left\lfloor\frac{n-k}{p^j}\right\rfloor\right)\leq\sum_{j=1}^{\lfloor\log_pn\rfloor}1 \leq \log_pn
\end{align*}
for any prime $p$. Hence, we deduce from Lemma~\ref{lem0.3} that
\begin{equation*}
\binom{n}{k}=\prod_{p\leq 2k} {\binom{n}{k}}_p\leq \prod_{p\leq 2k}n< n^{\frac{2.51012k}{\ln(2k)}}.
\end{equation*}
Combining this with~\eqref{eq0.3}, we establish
\begin{equation}\label{eq0.2}
\binom{k^2}{k}<\binom{n}{k}< \big(2k^2\big)^{\frac{2.51012k}{\ln(2k)}}.
\end{equation}

It follows from Lemma~\ref{lem0.2} that
\begin{align*}
\binom{k^2}{k} = \frac{\big(k^2\big)!}{k!\big(k^2-k\big)!} & \;
       > \;
\frac  {\sqrt{2\pi k^2}}
       {\sqrt{2\pi k}\cdot \sqrt{2\pi(k^2-k)}}
       \cdot
\frac  {\big(k^2\big)^{k^2}}
       {k^k \cdot \big(k^2-k\big)^{k^2-k}}
       \cdot
       e^{-\frac{1}{12k}-\frac{1}{12(k^2-k)}}
       \nonumber\\
       & \; = \;
\frac  {1}
       {\sqrt{2\pi(k-1)}}   \cdot
\frac  {k^{k^2}}
       {(k-1)^{k^2-k}}
       \cdot
       e^{-\frac{1}{12k}-\frac{1}{12(k^2-k)}}
       \nonumber\\
       & \; > \;
\frac  {1}
       {\sqrt{2\pi k}}   \cdot
\frac  {k^{k^2}}
       {(k-1)^{k^2-k}}
       \cdot
       e^{-2}.
\end{align*}
Substituting this into~\eqref{eq0.2}, we obtain
\begin{equation*}
\frac  {1}
       {\sqrt{2\pi k}}   \cdot
\frac  {k^{k^2}}
       {(k-1)^{k^2-k}}
       \cdot
       e^{-2}
       <
       \big(2k^2\big)^{\frac{2.51012k}{\ln(2k)}},
\end{equation*}
which implies
\begin{align*}
k\ln(k-1)+k^2\ln\left(1+\frac{1}{k-1}\right) & \; < \; \frac{2.51012\ln(2k^2)}{\ln(2k)}\cdot k+\frac{\ln(2\pi k)}{2}+2<6k+3.
\end{align*}
This yields
\begin{align*}
k\ln(k-1)+k^2\left(\frac{1}{k-1}-\frac{1}{2}\Big(\frac{1}{k-1}\Big)^2\right) <6k+3,
\end{align*}
that is,
\[
\ln(k-1)<\frac{6k+3}{k}-\frac{k}{k-1}+\frac{k}{2(k-1)^2}.
\]
This is a contradiction to $k\geq152$, completing the proof.
\end{proof}

Recall that $v_p(n)$ is defined as the exponent of $p$ in $n_p$.

\begin{lemma}\label{lem0.5}
Let $n$ be a positive integer. Then for each prime $p$, the following statements hold:
\begin{enumerate}[{\rm(a)}]
\item\label{lem0.5.1} $v_p(n!)\leq(n-1)/(p-1)$;
\item\label{lem0.5.2} $v_p((2n)!/n!)\leq n$, and equality holds if and only if $p=2$.
\end{enumerate}
\end{lemma}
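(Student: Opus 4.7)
The plan is to derive both inequalities from Legendre's formula in its digit-sum form. Recall that for any prime $p$ and positive integer $m$, one has
\[
v_p(m!)=\frac{m-s_p(m)}{p-1},
\]
where $s_p(m)$ denotes the sum of the digits of $m$ written in base $p$. This identity is an immediate consequence of the version of Legendre's formula already invoked in the proof of Lemma~\ref{lem0.1}.

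For part (a), since $n\geq1$ the base-$p$ expansion of $n$ is non-empty, so $s_p(n)\geq 1$. Substituting into the identity above gives
\[
v_p(n!)=\frac{n-s_p(n)}{p-1}\leq\frac{n-1}{p-1},
\]
as required.

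For part (b), I would first write
\[
v_p\!\left(\frac{(2n)!}{n!}\right)=v_p((2n)!)-v_p(n!)=\frac{n+s_p(n)-s_p(2n)}{p-1}.
\]
If $p=2$, then doubling in base $2$ is a left shift, so $s_2(2n)=s_2(n)$ and the expression collapses to $n$, giving the asserted equality. If $p$ is odd, I would use $s_p(n)\leq n$ (every digit contributes at most its face value to the sum) and $s_p(2n)\geq 1$ (since $2n\geq 2$), so that
\[
s_p(n)-s_p(2n)\leq n-1<n\leq n(p-2),
\]
whence $v_p((2n)!/n!)<n$. This simultaneously gives the inequality and the fact that equality forces $p=2$.

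I do not foresee a real obstacle here: the only mild point to check is that the digit-sum form of Legendre's formula is being applied consistently, and that the bounds $s_p(n)\leq n$ and $s_p(2n)\geq 1$ suffice to rule out equality for every odd prime (which they do comfortably, since already at $p=3$ one has $n(p-2)=n$ and the strict bound $s_p(n)-s_p(2n)\leq n-1$ is strict).
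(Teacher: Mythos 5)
Your proof is correct and follows essentially the same route as the paper: both rest on Legendre's formula, and your part~(a) is just the digit-sum identity $(p-1)v_p(n!)=n-s_p(n)$ that the paper derives explicitly before bounding $s_p(n)\geq1$. The only cosmetic difference is in part~(b) for odd $p$, where the paper simply discards $v_p(n!)$ and applies part~(a) to $(2n)!$ to get $v_p((2n)!)\leq(2n-1)/2<n$, while you keep both digit sums; both arguments are valid.
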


\begin{proof}
Let $n=\sum_{i=0}^{k}a_ip^i$ be the $p$-adic expansion, where $0\leq a_i\leq p-1$ for each $i\leq k$. Then
\[
v_p(n!)=\sum_{j=1}^{\lfloor\log_pn\rfloor}\left\lfloor\frac{n}{p^j}\right\rfloor
=\sum_{j=1}^{k}\sum_{i=j}^{k}a_ip^{i-j}.
\]
Hence, we conclude that
\begin{align*}
(p-1)\cdot v_p(n!)&=\sum_{j=1}^{k}\sum_{i=j}^{k}a_ip^{i-j+1}-\sum_{j=1}^{k}\sum_{i=j}^{k}a_ip^{i-j}\\
&=\left(\sum_{i=1}^{k}a_ip^{i}+\sum_{j=2}^{k}\sum_{i=j}^{k}a_ip^{i-j+1}\right)
-\left(\sum_{j=1}^{k-1}\sum_{i=j+1}^{k}a_ip^{i-j}+\sum_{j=1}^{k}a_i\right)\\
&=\sum_{i=1}^{k}a_ip^{i}-\sum_{j=1}^{k}a_i=n-\sum_{j=0}^{k}a_i\leq n-1,
\end{align*}
as statement~\eqref{lem0.5.1} asserts. Consequently, if $p\geq3$, then $v_p((2n)!/n!)\leq v_p((2n)!)\leq(2n-1)/2<n$. If $p=2$, then
\[
v_p((2n)!/n!)=v_2((2n)!)-v_2(n!)=
\sum_{j=1}^{\lfloor\log_2n\rfloor+1}\left\lfloor\frac{n}{2^{j-1}}\right\rfloor
-\sum_{j=1}^{\lfloor\log_2n\rfloor}\left\lfloor\frac{n}{2^j}\right\rfloor=n,
\]
which completes the proof.
\end{proof}

The next lemma is straightforward to verify, so we omit the proof here.

\begin{lemma}\label{lem0.6}
Let $a$ and $b$ be positive integers with $a>b$. Then the following statements hold:
\begin{enumerate}[{\rm(a)}]
\item\label{lem0.6.1} $|\Sy_a\wr\Sy_b|>|\Sy_b\wr\Sy_a|$;
\item\label{lem0.6.2} if $x$ and $y$ are positive integers with $ab=xy$ and $a>x>y$, then $|\Sy_a\wr\Sy_b|>|\Sy_x\wr\Sy_y|$.
\end{enumerate}
\end{lemma}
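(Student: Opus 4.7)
For part~(a), I would use $|\Sy_a \wr \Sy_b| = (a!)^b b!$ and $|\Sy_b \wr \Sy_a| = (b!)^a a!$ to rephrase the target inequality as $(a!)^{b-1} > (b!)^{a-1}$. The case $b = 1$ yields equality (so the statement is implicitly restricted to $b \geq 2$, the range in which the lemma is applied in Proposition~\ref{THM002}); for $b \geq 2$, taking logarithms reduces the inequality to
\[
\frac{\log a!}{a-1} > \frac{\log b!}{b-1}.
\]
Since $\log n!/(n-1)$ is the arithmetic mean of $\log 2, \log 3, \ldots, \log n$, and appending the strictly larger term $\log(n+1)$ raises such a mean, the function $n \mapsto \log n!/(n-1)$ is strictly increasing on $n \geq 2$, which settles~(a).

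For part~(b), I would first observe that $ab = xy$ together with $a > x > y$ force the ordering $a > x > y > b$ and give the identity $a/x = y/b$. Set $N = ab$. The plan is to estimate
\[
\log\bigl((a!)^b b!\bigr) - \log\bigl((x!)^y y!\bigr)
\]
via Stirling's formula (Lemma~\ref{lem0.2}). Substituting the leading approximation $\log n! \approx n \log n - n$ and invoking $a/x = y/b$, the principal contribution collapses to the transparent form
\[
N \log(y/b) - (y \log y - b \log b) + (y - b) \;=\; \int_b^y \left(\frac{N}{t} - \log t\right) dt.
\]
For $t \in [b, y]$, we have $N/t \geq N/y = x$ and $\log t \leq \log y < x$, so the integrand is bounded below by $x - \log y > 0$, giving a main gap of at least $(y-b)(x - \log y)$.

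The chief obstacle is controlling the sub-leading Stirling corrections, whose net contribution is $O((y-b) \log N)$ and can be negative. Making Lemma~\ref{lem0.2} explicit via $\log n! = n \log n - n + \tfrac{1}{2}\log(2\pi n) + \eta_n$ with $\eta_n \in [0, 1/(12n)]$, I would bound these corrections term by term and verify that the main gap $(y-b)(x - \log y)$ dominates once $x$ exceeds an absolute constant. Since only finitely many admissible quadruples $(a, b, x, y)$ fall below any such threshold, the remaining small cases can be disposed of by direct computation, completing part~(b).
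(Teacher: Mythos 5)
The paper offers no proof to compare against here: the authors dismiss this lemma as ``straightforward to verify'' and omit the argument entirely, so your proposal can only be judged on its own terms. Your part~(a) is complete and correct: rewriting the claim as $(a!)^{b-1}>(b!)^{a-1}$ and noting that $\log(n!)/(n-1)$ is the arithmetic mean of $\log 2,\dots,\log n$, hence strictly increasing for $n\ge 2$, settles it; your observation that $b=1$ gives equality (so the strict inequality as literally stated fails there) is a genuine catch, though harmless since in every application all block parameters are at least $2$. Part~(b), however, is a plan rather than a proof. The skeleton is sound --- the leading Stirling terms do collapse to $\int_b^y(N/t-\log t)\,dt\ge(y-b)(x-\log y)$, and the corrections are indeed of order $(y-b)\log N$, so the scheme would close once $x$ exceeds an absolute constant --- but the decisive step, bounding the corrections term by term, extracting an explicit threshold, and actually listing and checking the residual finite set of quadruples, is announced and never carried out. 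As written, the quantitative content of~(b) is deferred, which is disproportionate for a statement the authors regard as routine.

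The gap closes in a few lines without Stirling. From $ab=xy$ and $a>x>y$ one gets $b=xy/a<y$, so $a>x>y>b$, and
\[
\frac{(a!)^b\,b!}{(x!)^y\,y!}
=\frac{\bigl((x+1)(x+2)\cdots a\bigr)^{b}}{(x!)^{\,y-b}\,\bigl((b+1)(b+2)\cdots y\bigr)}.
\]
The numerator is a product of $(a-x)b=x(y-b)$ integers, each greater than $x$, so it exceeds $x^{x(y-b)}$; meanwhile $(x!)^{y-b}\le x^{(x-1)(y-b)}$ because $x!\le x^{x-1}$, and $(b+1)\cdots y\le y^{\,y-b}\le x^{\,y-b}$. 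The exponents of $x$ in the denominator sum to exactly $x(y-b)$, so the ratio is strictly greater than $1$. I would substitute this elementary computation for the analytic machinery in your part~(b); your mean-of-logarithms argument for part~(a) can stay as is.
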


We close this subsection with the following two lemmas.

\begin{lemma}\label{lem0.7}
Let $a$, $b$, $x$ and $y$ be positive integers such that $ab=xy$ and $|\Sy_a\wr\Sy_b|=|\Sy_x\wr\Sy_y|$. Then $a=x$ and $b=y$.
\end{lemma}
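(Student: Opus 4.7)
The hypothesis translates to the numerical identity
\[
(a!)^b\cdot b! \;=\; (x!)^y\cdot y!,
\]
and the plan is to extract a contradiction from $(a,b)\neq(x,y)$ via a $p$-adic valuation argument at a Bertrand prime. By the symmetry of this identity under exchanging the pairs $(a,b)$ and $(x,y)$, one may assume $a\geq x$; the subcase $a=x$ combined with $ab=xy$ forces $b=y$ immediately. Assume therefore $a>x$, whence $b<y$ from $ab=xy$; by Lemma~\ref{lem0.6}\eqref{lem0.6.1} and the standing imprimitive-subgroup convention $a,b,x,y\geq 2$, one may further arrange $a\geq b$ and $x\geq y$, so that $a=\max\{a,b,x,y\}$.

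For $a\geq 3$, Bertrand's postulate (Lemma~\ref{lem0.4}) supplies a prime $p$ with $a/2<p<a$, and the argument splits on whether $p>x$ or $p\leq x$. If $p>x$, then $p\leq a-1$ gives $p\mid a!$ and hence $p$ divides the left-hand side, whereas $p>x\geq y$ gives $p\nmid(x!)^y y!$, a direct contradiction. If instead $p\leq x$, then $a/2<p\leq x\leq a$ forces $v_p(a!)=v_p(x!)=1$; moreover $p>a/2\geq b/2$ and $p>a/2\geq y/2$ force $v_p(b!),v_p(y!)\in\{0,1\}$, with $v_p(b!)=1$ precisely when $p\leq b$ and analogously for $y$. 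Equating $p$-adic valuations on the two sides of the identity yields
\[
b+v_p(b!)\;=\;y+v_p(y!);
\]
since $b<y$, this is only possible with $y=b+1$, $v_p(b!)=1$, and $v_p(y!)=0$, i.e.\ $p\leq b$ and $p>b+1$, an impossibility. The residual cases $a\leq 2$ amount to only a handful of tuples, all directly verifiable.

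The main obstacle is the WLOG setup and ensuring Bertrand's prime $p$ lies in a range controlled simultaneously by all four of $a$, $b$, $x$, $y$; once $a$ is forced to be the unique maximum and $p$ is placed in the window $(a/2,a)$, the valuation comparison itself is essentially immediate.
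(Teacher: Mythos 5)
Your reduction to the case $a=\max\{a,b,x,y\}$ is not valid, and the configurations it discards are exactly the hard part of the proof. The quantity $|\Sy_a\wr\Sy_b|=(a!)^b\cdot b!$ is \emph{not} symmetric in $a$ and $b$; indeed Lemma~\ref{lem0.6}\eqref{lem0.6.1} asserts precisely that swapping the two coordinates changes the order when $a\neq b$, so it cannot be invoked to ``arrange $a\geq b$ and $x\geq y$''. (There is also no standing convention $a,b,x,y\geq2$ here: the lemma is stated for arbitrary positive integers.) The only genuine symmetry of the hypothesis is exchanging the pair $(a,b)$ with the pair $(x,y)$. After normalizing $a>x$, $b<y$ by that symmetry, it may well happen that $b$ or $y$ is the largest of the four parameters; one checks easily that if $b$ is the maximum then $a$ is the minimum, leaving the orderings $b>x\geq y>a$ and $b>y\geq x>a$. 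In these configurations a Bertrand prime in $(a/2,a)$ controls nothing, since $a$ is the \emph{smallest} parameter and such a prime occurs to enormous powers in $b!$, $(x!)^y$ and $y!$.

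What you did prove is essentially the case the paper dispatches in one line via the monotonicity statements of Lemma~\ref{lem0.6} (when $\max\{a,b,x,y\}\in\{a,x\}$). Within that case your valuation argument is workable, though the subcase ``$p>x$'' quietly uses $x\geq y$ again; it should instead use $a>y$ together with a separate check when $p\leq y<a<2p$. The missing cases require different tools: for $b>x>y>a$ the paper first forces $b<2a$ by a Bertrand prime in $(b/2,b)$ and then needs an explicit growth estimate on $(a!)^{b-y}b\cdots(y+1)/(x\cdots(a+1))^y$ to reduce to finitely many tuples; for $b>y>x>a$ it forces $b<2y$, takes a prime $r\in(x/2,x)$, and derives a contradiction from the bound $v_r(b!/y!)<y$ supplied by Lemma~\ref{lem0.5}\eqref{lem0.5.2}. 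None of this is recoverable from the argument as written, so the proposal has a genuine gap rather than an alternative route.
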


\begin{proof}
If $\max\{a,b,x,y\}\in\{a,x\}$, then the conclusion immediately follows from Lemma~\ref{lem0.6}. Without loss of generality, we only need to prove that neither $b>x>y>a$ nor $b>y>x>a$. Write $M=(a!)^b\cdot b!=|\Sy_a\wr\Sy_b|$ and $N=(x!)^y\cdot y!=|\Sy_x\wr\Sy_y|$.

Assume that $b>x>y>a$. If $b\geq2a$, then by Lemma~\ref{lem0.4}, there exists a prime $p$ such that $a\leq b/2<p<b$ and so $v_p(M)=1$. This, together with the fact that $v_p(N)=0$ or $v_p(N)\geq y$, contradicts the condition $M=N$. Thus, $b<2a$ and we deduce from $(x-a)y=a(b-y)$ that
\[
1=\frac{M}{N}=\frac{(a!)^b\cdot b!}{(x!)^y\cdot y!}
=\frac{(a!)^{b-y}\cdot b\cdots(y+1)}{(x\cdots(a+1))^y}
<\frac{(a!)^{b-y}\cdot b^{b-y}}{a^{(x-a)y}}
=\left(\frac{a!\cdot b}{a^a}\right)^{b-y}
<\left(\frac{2a!}{a^{a-1}}\right)^{b-y}.
\]
This inequality holds only if $a\leq3$ and so $b<2a\leq6$, which gives rise to finitely many possibilities for $(a,b,x,y)$. Then one may directly check for each possibility that $M\neq N$, a contradiction.

Assume that $b>y>x>a$. If $b\geq 2y$, then Lemma~\ref{lem0.4} gives a prime $p$ with $y<p<b$, so that $p$ divides $M$ but not $N$, a contradiction. Therefore, $b<2y$. Consider a prime $r$ with $x/2<r<x$ (which exists, again by Lemma~\ref{lem0.4}). Then $v_r(N)=y+v_r(y!)$. Note that $v_r(M)=b\cdot v_r(a!)+v_r(b!)$. Since $b>y$ and then $v_r(b!)\geq v_r(y!)$, we deduce from $v_r(N)=v_r(M)$ that $v_r(b!)=v_r(M)=v_r(N)=y+v_r(y!)$. However, since $b<2y$, we derive from Lemma~\ref{lem0.5}\eqref{lem0.5.2} that $v_r(b!)-v_r(y!)<y$, a contradiction. The proof is complete.
\end{proof}

\begin{lemma}\label{lem0.9}
Let $a$, $b$, $x$ and $y$ be positive integers with $a\geq5$, $b\geq2$, $x\geq5$, $y\geq2$ such that $a^b=x^y$ and $a\neq x$. Then $|\Sy_a\wr\Sy_b|\neq|\Sy_x\wr\Sy_y|$ and $|\Sy_a\wr\Sy_b|\neq2|\Sy_x\wr\Sy_y|$.
\end{lemma}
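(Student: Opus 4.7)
The plan is to exhibit a prime $p$ in $(a/2, a)$ via Bertrand's postulate (Lemma~\ref{lem0.4}) and then separate the two group orders using their $p$-adic valuations. Assume without loss of generality that $a>x$. Since $a^b=x^y$, the integers $a$ and $x$ share the same set of prime divisors, and for each such prime $q$ the ratio $v_q(a)/v_q(x)=y/b$ is independent of $q$. Writing $y/b=t/s$ in lowest terms with $t>s\geq 1$, unique factorization furnishes integers $c\geq 2$ and $d\geq 1$ with $a=c^t$, $x=c^s$, $b=sd$, and $y=td$. The condition $x\geq 5$ reads $c^s\geq 5$, so $a/x=c^{t-s}\geq 2$ and hence $a\geq 2x$.

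By Lemma~\ref{lem0.4} there is a prime $p$ with $a/2<p<a$; then $p>a/2\geq x$, so $p>x$ and in particular $p\geq 7$. Because $p>a/2$ forces $p^2>a$ for $a\geq 5$, we have $v_p(a!)=1$, and $v_p(x!)=0$ since $p>x$. Consequently
\[
v_p(|\Sy_a\wr\Sy_b|)=b\cdot v_p(a!)+v_p(b!)=b+v_p(b!)\geq b,
\qquad
v_p(|\Sy_x\wr\Sy_y|)=v_p(y!).
\]

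The crux is the bound $v_p(y!)\leq b-1$. By Lemma~\ref{lem0.5}\eqref{lem0.5.1}, $v_p(y!)\leq(y-1)/(p-1)$, so it suffices to show $y<b(p-1)$, equivalently $t/s<p-1$. Using $p-1>(c^t-2)/2$ this reduces to the inequality $sc^t\geq 2(s+t)$, which is verified by cases on $s$: for $s=1$ the hypothesis $c^s\geq 5$ forces $c\geq 5$ and $5^t\geq 2t+2$ follows by induction; for $s\geq 2$ one has $c\geq 2$, and $sc^t\geq s\cdot 2^t$ dominates $2(s+t)\leq 4t$ once $t\geq 4$, while the few small-$t$ cases are checked directly (the tightest admissible instance is $(c,s,t)=(2,3,4)$, for which $sc^t=48\geq 14=2(s+t)$). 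It then follows that $v_p(|\Sy_a\wr\Sy_b|)\geq b>b-1\geq v_p(|\Sy_x\wr\Sy_y|)$, and since the odd prime $p\geq 7$ satisfies $v_p(2)=0$, neither $|\Sy_a\wr\Sy_b|=|\Sy_x\wr\Sy_y|$ nor $|\Sy_a\wr\Sy_b|=2|\Sy_x\wr\Sy_y|$ can hold (each would force $v_p(|\Sy_a\wr\Sy_b|)=v_p(|\Sy_x\wr\Sy_y|)$). The main obstacle is the tidy book-keeping for the inequality $sc^t\geq 2(s+t)$ across all admissible triples $(c,s,t)$, but this is a finite boundary check once the factorization structure $a=c^t$, $x=c^s$ has been set up.
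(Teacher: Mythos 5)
Your proof is correct, and while it opens exactly as the paper's does --- reduce to $a>x$, deduce $a\geq 2x$ from $a^b=x^y$, invoke Bertrand's postulate (Lemma~\ref{lem0.4}) to get a prime $p\in(a/2,a)$ with $p>x$, and compare $p$-adic valuations --- it diverges at the decisive step. The paper shows that $|\Sy_a\wr\Sy_b|=i|\Sy_x\wr\Sy_y|$ would force $b=v_p(y!/b!)$, uses Lemma~\ref{lem0.5}\eqref{lem0.5.2} to conclude $y>2b$, escalates to $y>2a$ and then $y\geq 4p>2a$, and finally applies Bertrand a \emph{second} time at scale $y$ to produce a prime in $(y/2,y)$ dividing one order but not the other. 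You instead exploit the multiplicative structure of $a^b=x^y$: unique factorization gives $a=c^t$, $x=c^s$, $b=sd$, $y=td$ with $\gcd(s,t)=1$, so $y/b=t/s$ is a controlled rational, and Lemma~\ref{lem0.5}\eqref{lem0.5.1} together with the elementary inequality $sc^t\geq 2(s+t)$ (which does hold for all admissible triples, including the boundary cases $s=1$, $c\geq5$ and $s\geq2$, $c\geq2$) yields $v_p(y!)\leq b-1<b\leq v_p(|\Sy_a\wr\Sy_b|)$ directly. Your route uses Bertrand only once and avoids part~\eqref{lem0.5.2} of Lemma~\ref{lem0.5} entirely, at the cost of the explicit parametrization and a small case check; the paper's route avoids the parametrization but needs the second prime-gap argument. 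One presentational remark: after the reduction to $a>x$, the two excluded equalities in the original labelling become $M\neq N$ and either $M\neq 2N$ or $2M\neq N$ depending on which of $a,x$ was larger; your strict inequality $v_p(M)>v_p(N)$ for the odd prime $p$ kills all three at once, so the argument is complete, but it would be worth saying so explicitly rather than only naming $M=N$ and $M=2N$.
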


\begin{proof}
Without loss of generality, let $a>x$ and so $b<y$. Then it suffices to show that $M:=|\Sy_a\wr\Sy_b|$ cannot be equal to $N:=i|\Sy_x\wr\Sy_y|$, where $i\in\{1/2,1,2\}$. Suppose on the contrary that $M=N$. It follows from $a^b=x^y$ that $b\cdot v_r(a)=y\cdot v_r(x)$ for each prime $r$, and so $v_r(a)\geq v_r(x)$ for each prime $r$. This, together with $a>x$, implies that $a\geq2x$. By Lemma~\ref{lem0.4}, there exists a prime $p$ such that $a>p>a/2\geq x\geq5$. Hence,
\begin{equation}\label{eq0.5}
b=v_p(M)-v_p(b!)=v_p(N)-v_p(b!)=v_p(y!/b!).
\end{equation}
Combining this with Lemma~\ref{lem0.5}\eqref{lem0.5.2}, we obtain $y>2b$. Moreover, as $v_p(y!)\geq v_p(y!/b!)=b\geq2$ and $p>a/2$, we have $y>a$. If $b\leq3$, then $a^b\leq a^3<5^a\leq x^a<x^y$, a contradiction. Thus, $b\geq4$, and we derive from~\eqref{eq0.5} that $v_p(y!)\geq v_p(y!/b!)=b\geq4$. Since $p>5$, it follows that $y\geq4p>2a$. This, together with $y>2b$, implies that $M=|\Sy_a\wr\Sy_b|$ is not divisible by any prime larger than $y/2$. However, Lemma~\ref{lem0.4} implies that $M=N=i|\Sy_x\wr\Sy_y|$ is divisible by some prime in $(y/2,y)$, a contradiction.
\end{proof}

\subsection{The subgroups of almost simple groups with special order}\label{SUBSEC3.2}

In this subsection, we collect some results on the order of subgroups of almost simple groups.

\begin{lemma}\cite[Corollary~1.2]{Maroti2002}\label{lem000}
Let $G$ be a primitive group of degree $n$ with $G\ngeq\Al_n$. Then $|G|<3^n$. Moreover, if $n>24$, then $|G|<2^n$.
%Let $H$ be a primitive subgroup of $\Sy_n$ not containing $\Al_n$. Then $|H|<3^n$, and moreover, if $n>24$, then $|H|<2^n$.
\end{lemma}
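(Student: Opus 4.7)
The plan is to follow Mar\'{o}ti's approach, which combines the O'Nan-Scott classification of primitive permutation groups with the Classification of Finite Simple Groups. I would first split the primitive subgroups $G\leq\Sy_n$ satisfying $G\not\geq\Al_n$ according to their O'Nan-Scott type (affine, almost simple, simple diagonal, product action, and twisted wreath), aiming in each type for the sharper product bound
\[
|G|\ \leq\ n\prod_{i=0}^{\lfloor\log_2 n\rfloor-1}(n-2^i),
\]
and recording the finitely many small-degree exceptions, of which the Mathieu groups $\M_{11},\M_{12},\M_{23},\M_{24}$ are the most important. From this bound, both $|G|<3^n$ for every $n\geq 2$ and $|G|<2^n$ for every $n>24$ follow by an elementary monotonicity check, with the small-$n$ endpoints handled by direct inspection. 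The threshold $24$ in the second inequality is sharp because $|\M_{24}|$ exceeds $2^{24}$, while every remaining primitive group of degree at least $25$ is controlled by the product bound.

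For the affine case $G\leq\AGL_d(p)$ with $n=p^d$, the estimate $|G|\leq n\cdot|\GL_d(p)|<n^{d+1}$ is absorbed into the target. For simple diagonal and twisted wreath types with socle $T^k$ and degree at least $|T|^{k-1}$, the trivial bound $|G|\leq|T|^k\cdot k!\cdot|\Out(T)|$ is easily absorbed once $k\geq 2$. The product action case $G\leq K\wr\Sy_r$ with $n=m^r$, $K$ primitive of degree $m$ and $K\not\geq\Al_m$, is handled by induction on $n$: using the inductive hypothesis $|K|<3^m$ on each wreath factor together with a clean estimate for $(m!)^r r!$ confirms $|G|<3^n$ outside a few small base cases. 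The almost simple case is treated family-by-family, using CFSG and the known minimal faithful primitive degrees of nonabelian simple groups to verify the target bound for alternating, classical, exceptional, and sporadic socles; the sporadic exceptions are then checked directly from ATLAS data.

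The main obstacle is the almost simple case, which genuinely needs CFSG together with the detailed classification of large-index maximal subgroups of the nonabelian simple groups -- Aschbacher's theorem for classical socles and the Liebeck-Seitz results for exceptional types -- to ensure that the minimal faithful primitive degree of $T$ is large enough relative to $|T|$ to yield the target bound on $|G|=|T|\cdot|G{:}T|$. Once every O'Nan-Scott type has been controlled and the finitely many explicit exceptions verified, the reduction to $3^n$ and (for $n>24$) to $2^n$ is a short monotonicity argument comparing $n\prod_{i<\log_2 n}(n-2^i)$ against the relevant exponential.
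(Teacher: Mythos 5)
This lemma is not proved in the paper at all: it is quoted verbatim from Mar\'{o}ti's paper (cited as \cite[Corollary~1.2]{Maroti2002}), so there is no internal proof to compare against. Your outline does faithfully reproduce the strategy of the cited source --- the O'Nan--Scott reduction, the target product bound $n\prod_{i=0}^{\lfloor\log_2 n\rfloor-1}(n-2^i)$ with the Mathieu groups $\M_{11},\M_{12},\M_{23},\M_{24}$ and the $k$-subset product actions of $\Sy_m\wr\Sy_r$ as the exceptional classes, and a final monotonicity comparison against $3^n$ and $2^n$ --- and the claimed sharpness of the threshold $24$ via $|\M_{24}|>2^{24}$ is correct. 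However, as written this is a proof plan rather than a proof: every substantive step (the family-by-family almost simple analysis via minimal faithful primitive degrees, the induction for product actions, the treatment of the $\binom{m}{k}^r$-degree actions that escape the product bound, and the endpoint checks for small $n$) is asserted rather than carried out, so the proposal cannot be judged complete on its own terms; it can only be judged consistent with the argument in the reference the paper relies on.
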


The following result exhibits simple groups which have a subgroup of prime power index.

\begin{lemma}\cite[Theorem~1]{Guralnick1983}\label{lem0.0}
Let $T$ be a nonabelian simple group with a subgroup $L$ of prime power index $n$. Then one of the following holds:
\begin{enumerate}[{\rm(a)}]
\item\label{lem0.0.1} $T=\Al_n$ and $L=\Al_{n-1}$;
\item\label{lem0.0.2} $T=\PSL_m(q)$ and $L$ is the stabilizer of a line or hyperplane with $n=(q^m-1)/(q-1)$;
\item\label{lem0.0.3} either $(T,L)\in\{(\PSL_2(11),\Al_5),(\M_{23},\M_{22}),(\M_{11},\M_{10})\}$, or $T=\PSp_4(3)$ and $L$ is the parabolic subgroup of index $27$.
\end{enumerate}
\end{lemma}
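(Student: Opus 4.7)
The plan is to invoke the Classification of Finite Simple Groups and treat each family of simple groups separately. The central observation driving the argument is that if $[T:L]=p^a$ is a prime power then $|L|_r=|T|_r$ for every prime $r\neq p$ dividing $|T|$; equivalently, $L$ contains a full Sylow $r$-subgroup of $T$ for every such $r$. This divisibility condition is very restrictive and eliminates almost every candidate.

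Sporadic groups will be handled by direct inspection of the \textsc{Atlas}, recovering the Mathieu examples $(\M_{23},\M_{22})$ and $(\M_{11},\M_{10})$ in case~\eqref{lem0.0.3}. For exceptional groups of Lie type, the known orders of maximal subgroups combined with Zsigmondy's theorem (Lemma~\ref{lem0.10}) show that no maximal subgroup can have prime-power index.

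For the alternating groups $T=\Al_m$ I first reduce to the case where $L$ is maximal in $T$, since any proper overgroup of $L$ inherits the prime-power index, and then walk through the O'Nan--Scott types. The intransitive maximal subgroups produce $L=\Al_{m-1}$ of index $m$, which yields case~\eqref{lem0.0.1} precisely when $m$ itself is a prime power. For the imprimitive, product-action and diagonal types, the indices are divisible by multinomial coefficients that, by Bertrand's postulate (Lemma~\ref{lem0.4}) and Stirling-type estimates (Lemma~\ref{lem0.2}), carry at least two distinct prime divisors once $m$ is large enough; a finite check of the small $m$ plus a case analysis of the almost-simple maximal subgroups of $\Al_m$ disposes of the remaining possibilities.

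For classical groups of Lie type in natural characteristic $\ell$, the argument splits on whether $p\neq\ell$ or $p=\ell$. When $p\neq\ell$, the subgroup $L$ contains a full Sylow $\ell$-subgroup of $T$; since in a group of Lie type every subgroup containing a Sylow $\ell$-subgroup must be parabolic, a direct examination of parabolic indices isolates $T=\PSL_m(q)$ with $L$ stabilising a point or hyperplane, giving case~\eqref{lem0.0.2}. When $p=\ell$ the index is a power of the defining characteristic, and one combines Aschbacher's classification of maximal subgroups of classical groups with Zsigmondy-style divisibility arguments to narrow the surviving candidates down to $(\PSL_2(11),\Al_5)$ and the $\PSp_4(3)$-example of index $27$ in case~\eqref{lem0.0.3}. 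The main obstacle will be this last step: the Aschbacher analysis produces nine families of maximal subgroups, and ruling out each of them across every Lie rank and characteristic demands a uniform order estimate in the spirit of Lemma~\ref{LEM014} and Propositions~\ref{PROP011}--\ref{PROP012}, together with separate treatment of a handful of low-rank coincidences.
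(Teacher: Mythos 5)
This lemma is not proved in the paper at all: it is quoted verbatim as \cite[Theorem~1]{Guralnick1983}, so there is no internal argument to measure your attempt against. What you have written is a plan for reproving Guralnick's classification from scratch, and as an outline it does track the shape of the argument in the literature: the observation that $L$ contains a Sylow $r$-subgroup of $T$ for every prime $r\neq p$, the reduction via Borel--Tits (a subgroup of a group of Lie type containing a Sylow subgroup for the defining characteristic is parabolic) in the cross-characteristic case, and the Zsigmondy-based analysis of parabolic indices that singles out the line and hyperplane stabilizers in $\PSL_m(q)$.

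However, as a proof it has real gaps. The defining-characteristic case for classical groups --- which is where $(\PSL_2(11),\Al_5)$ and the index-$27$ parabolic of $\PSp_4(3)$ actually live --- is dispatched in one sentence by appeal to ``Aschbacher's classification \ldots combined with Zsigmondy-style divisibility arguments''; this is precisely the hard part, and nothing in your sketch indicates how the nine Aschbacher classes would be eliminated uniformly across all ranks and characteristics. The exceptional groups are likewise asserted rather than argued. A smaller point: reducing to $L$ maximal in $T$ only identifies a maximal overgroup $M\supseteq L$ of prime-power index; since the lemma pins down $L$ itself (e.g.\ $L=\Al_{n-1}$ exactly), you still need a short iteration ruling out proper $L<M$ with $[M:L]$ a $p$-power (for instance, $[T:M]$ and $[M:L]$ both being powers of $p$ forces $n$ and $n-1$ to be $p$-powers in the alternating case, which is absurd). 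Since the paper correctly treats this as an imported black box, the appropriate ``proof'' here is the citation; attempting to rederive the classification is neither necessary nor, in the form given, complete.
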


The next result identifies almost simple groups $K$, with a subgroup of order divisible by all prime divisors of $|\Soc(K)|$.

\begin{lemma}\cite[Corollary~5]{LPS2000}\label{lem0.8}
Let $K$ be an almost simple group with socle $T$. Suppose that $L$ is a subgroup of $K$ not containing $T$ such that every prime divisor of $|T|$ divides $|L|$. Then all pairs $(T,L)$ are listed in~\cite[Table~10.7]{LPS2000}. In particular, the infinite families for $(T,L)$ with $L$ maximal in $K$ are listed in Table~$\ref{TABLE001}$.
\end{lemma}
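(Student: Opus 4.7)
The plan is to derive this lemma almost entirely from the cited result \cite[Corollary~5]{LPS2000}, which already provides the classification underpinning the first assertion. Specifically, \cite[Corollary~5]{LPS2000} asserts that any pair $(T,L)$ with $T$ nonabelian simple, $L$ a proper subgroup of some almost simple group $K$ with socle $T$, $L \not\supseteq T$, and $L$ meeting every prime of $|T|$, must appear in \cite[Table~10.7]{LPS2000}. So the first sentence of the lemma requires nothing more than invoking that reference; no additional argument is needed.

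For the second assertion, my approach is to run through \cite[Table~10.7]{LPS2000} entry by entry and retain only those pairs $(T,L)$ which (i) belong to an infinite family, as opposed to a sporadic exception, and (ii) can be realised with $L$ maximal in some almost simple overgroup $K$ of $T$. Step (i) is a direct inspection: certain rows of the reference table are parametrised by $q$ or by a dimension, and these are precisely the infinite families to be recorded; the remaining rows correspond to finitely many exceptional pairs and are discarded. Step (ii) requires a separate check against the Aschbacher/maximal-subgroup tables for the relevant classical groups (and against the known maximal subgroup structure of alternating, symmetric, and exceptional groups) to confirm maximality of $L$ in some $K$ with $T \le K \le \Aut(T)$; entries where $L$ is merely a subgroup satisfying the divisibility condition but is always properly contained in a larger subgroup with the same property are also discarded.

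The outcome of this sifting is exactly what is listed in Table~\ref{TABLE001}, which is therefore presented without further proof. I expect step (ii) to be the only delicate part: some families in \cite[Table~10.7]{LPS2000} record $L$ up to a containment chain, and one has to be careful to take the maximal representative in each chain and to record the correct overgroup $K$ in which $L$ is maximal (this is a well-known subtlety, as the maximality of $L$ can depend on which $K$ between $T$ and $\Aut(T)$ one considers). Beyond that, there is no genuinely new mathematical content to supply; the lemma is simply a specialisation of \cite[Corollary~5]{LPS2000} tailored to the shape in which it will be applied in the subsequent proof of Proposition~\ref{THM002}.
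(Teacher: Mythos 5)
Your proposal matches the paper exactly: the lemma is stated as a quotation of \cite[Corollary~5]{LPS2000}, with no proof supplied beyond the citation, and Table~\ref{TABLE001} is obtained precisely by the sifting of \cite[Table~10.7]{LPS2000} you describe (retaining the parametrised families and the maximal representatives). Your cautionary remark about checking maximality of $L$ in the correct overgroup $K$ is sensible but introduces no divergence from what the paper does.
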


\vspace{-1ex}
\begin{table}[H]
\centering
\caption{Infinite families of almost simple $K$ with a maximal subgroup $L$,\\ whose order is divisible by all prime divisors of $|T|$, where $T=\Soc(K)$}
\vspace{-1ex}
\begin{tabular}{llll}
\toprule
$(T,L\cap T)$               & $ |T|$  &
$|T\,{:}\,L\cap T|$         & Conditions \\
\midrule
$(\Al_m,(\Sy_d\times\Sy_{m-d})\cap\Al_m)$ & $m!/2$ &
$\binom{m}{d}$ & $d<m/2$ \\

\rule{0pt}{3ex}$(\PSp_{2m}(q),\OO_{2m}^-(q))$ & $q^{m^2}\prod_{i=1}^m\left(q^{2i}-1\right)$ &
$q^m(q^m-1)/2$ & $m\geq2$ even, $q$ even\\

\rule{0pt}{3ex}$(\POm_{2m+1}(q),\Om_{2m}^-(q).2)$ & $\frac{1}{2}q^{m^2}\prod_{i=1}^m\left(q^{2i}-1\right)$ &
$q^m(q^m-1)/2$ & $m\geq4$ even, $q$ odd\\

\rule{0pt}{3ex}$(\POm_{2m}^+(q),\Om_{2m-1}(q))$ & $\frac{q^{m(m-1)}}{4(q^m+1)}\prod_{i=1}^{m}\left(q^{2i}-1\right)$ &
$q^{m-1}(q^m-1)/2$ & $m\geq4$ even, $q$ odd \\

\rule{0pt}{3ex}$(\POm_{2m}^+(q),\Sp_{2m-2}(q))$ & $\frac{q^{m(m-1)}}{q^m+1}\prod_{i=1}^{m}\left(q^{2i}-1\right)$ &
$q^{m-1}(q^m-1)$ & $m\geq4$ even, $q$ even \\

\rule{0pt}{3ex}$(\PSp_{4}(q),\PSp_{2}(q^2){:}2)$ & $\frac{1}{d}q^4(q^4-1)(q^2-1)$ &
$q^2(q^2-1)/2$ & $d=\gcd(2,q-1)$ \\

\rule{0pt}{3ex}$(\PSL_{2}(p),p{:}((p-1)/2))$ & $p(p^2-1)/2$ &
$p+1$ & $p$ a Mersenne prime\\
\bottomrule
\end{tabular}
\label{TABLE001}
\end{table}

We close this subsection with the following result analyzing the groups in Table~\ref{TABLE001}.

\begin{lemma}\label{CORO001}
Let $K$ be an almost simple group with socle $T$ and a maximal subgroup $L$ as described in Table~$\ref{TABLE001}$, and let $n=|T\,{:}\,T\cap L|$. Then the following statements hold:
\begin{enumerate}[{\rm(a)}]
\item\label{CORO001.3} if $T\neq\Al_m$, $\PSp_4(3)$ or $\PSL_2(p)$, then there are at least three prime divisors of $n$;
\item\label{CORO001.1} if $K$ is of Lie type over $\mathbb{F}_q$ of characteristic $p$, then $\left(|T|_p/n_p\right)^2>n$ and $|T|_p^3>|\Aut(T)|$;
\item\label{CORO001.2} if $T$ is neither $\PSL_2(7)$ nor $\Al_m$ with $m\leq10$, then $|T|>2n^2\log_2n$.
\end{enumerate}
\end{lemma}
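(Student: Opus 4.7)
My plan is to verify each of the three assertions by walking through the seven infinite families of Table~\ref{TABLE001} in turn, reading off $|T|$, $|\Aut(T)|$, $n = |T\,{:}\,T\cap L|$ and the relevant $p$-parts from the explicit formulas there, and then checking the stated inequality. In every row both sides of each inequality are explicit monomial or polynomial expressions in $q$ and $m$, so once a general estimate is in place only finitely many small boundary cases remain to be verified by direct arithmetic.

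For \eqref{CORO001.3}, I exclude the rows in which $T=\Al_m$ or $\PSp_4(3)$ or $\PSL_2(p)$, leaving rows in which $n$ has the shape $q^a(q^b-1)/c$. The defining characteristic $p$ supplies one prime divisor; a primitive prime divisor of the pair $(q,b)$, guaranteed by Zsigmondy's Theorem (Lemma~\ref{lem0.10}), supplies a second prime different from $p$; and applying Zsigmondy once more to a proper divisor $b'$ of $b$, or to a different cyclotomic factor of $q^b-1$, produces a third. The Zsigmondy exceptions ($(q,b)=(2,6)$ and pairs with $b=2$ and $q$ Mersenne) leave only a short finite list of small $(q,m)$, which is handled by inspection of the explicit factorization of $n$.

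For \eqref{CORO001.1}, a direct comparison of $p$-parts suffices. In each Lie-type row $n_p$ is at most $q^m$, while $|T|_p$ equals one of $q^{m^2}$, $q^{m(m-1)}$, or (for row six) $q^4$, so $(|T|_p/n_p)^2$ is a power of $q$ whose exponent dominates that of the bound $n<q^{2m}$. For the second inequality I use the crude estimate $|\Aut(T)|\le 2f\cdot q^{2m^2+m}$, coming from $\prod_{i=1}^m(q^{2i}-1)<q^{m(m+1)}$ together with the Lie-type bound on $|\Out(T)|$; comparing with $|T|_p^3$ leaves a surplus of order $q^{m^2-m}/(2f)$, which exceeds $1$ in every row under the stipulated bounds on $m$. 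The $\PSL_2(p)$ row reduces to the elementary identity $p^3>p(p^2-1)$.

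For \eqref{CORO001.2}, the $\Al_m$ row is handled by combining Lemma~\ref{LEM011}, which gives $n=\binom{m}{d}\le 2^m/\sqrt{m}$ and hence $2n^2\log_2 n\le 2\cdot 4^m$, with Stirling's formula (Lemma~\ref{lem0.2}), which gives $m!/2>(m/e)^m$; the ratio exceeds $1$ for $m\ge 11$ because $m/e>4$ in that range. For the Lie-type rows, the cyclotomic factorization of $|T|$ yields $|T|\ge q^{2m^2+m}/2^m$, while $n<q^{2m}$ and $\log_2 n<2m\log_2 q$, so $|T|/(2n^2\log_2 n)$ is a positive power of $q$ divided by a logarithmic quantity and is seen to exceed $1$ under the dimension hypotheses of Table~\ref{TABLE001}. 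The main obstacle is the tightness of this estimate near the low-parameter boundaries: the general polynomial bound is slack by a factor depending on $\log_2 q$, so the Mersenne row of $\PSL_2(p)$ (where the only failure is the excluded case $\PSL_2(7)$) and the $\PSp_4(q)$ row with the smallest admissible $q$ must be dispatched by explicit arithmetic, a short case split that settles the residual inequalities.
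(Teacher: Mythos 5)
Your proposal is correct and follows essentially the same route as the paper: a row-by-row verification over Table~\ref{TABLE001}, with the two extra primes in part~\eqref{CORO001.3} extracted from the factorization of $q^m-1$ (the paper uses $(q^{m/2}+1)(q^{m/2}-1)/2$ directly where you invoke Zsigmondy, an immaterial difference), direct comparison of $p$-parts and crude polynomial bounds for the Lie-type cases of~\eqref{CORO001.1} and~\eqref{CORO001.2}, and exactly the same combination of Lemma~\ref{LEM011} with Stirling's formula for the $\Al_m$ case of~\eqref{CORO001.2}. The paper in fact dismisses the Lie-type verifications as ``straightforward,'' so your write-up is, if anything, more explicit about where the boundary cases must be checked by hand.
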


\begin{proof}
Observe that, for even $m$, the number $(q^m-1)/2=(q^{m/2}+1)(q^{m/2}-1)/2$ has at least two prime divisors for each pair $(m,q)\notin\{(2,2),(2,3)\}$. Then statement~\eqref{CORO001.3} is easy to verify. Moreover, statements~\eqref{CORO001.1} and~\eqref{CORO001.2} for groups $T$ of Lie type are straightforward to verify. Let $T=\Al_m$, so that $n=\binom{m}{d}$ for some $d<m/2$. By the fact $\binom{m}{d}<2^m/\sqrt{m}$ from Lemma~\ref{LEM011}, we derive from Lemma~\ref{lem0.2} that
\[
2n^2\log_2n<2\cdot\frac{4^m}{m}\cdot m<2\cdot\left(\frac{m}{e}\right)^m<\frac{m!}{2}=|T|
\]
for $m\geq11$, which completes the proof.
\end{proof}

\subsection{Proof of Proposition~\ref{THM002}}\label{SUBSEC3.4}

%Based on the lemmas in the previous two subsections, we now ready to establish Theorem~\ref{THM002}.
Let $G=\Al_n$ or $\Sy_n$, and let $H\neq\Al_n$ be a maximal subgroup of $G$. By the classification of maximal subgroups of alternating and symmetric groups (see~\cite{LPS1987}, for example), one of the following holds:
\begin{enumerate}[{\rm(i)}]
\item\label{a} (intransitive) $H=(\Sy_m\times\Sy_k)\cap G$, with $n=m+k$ and $m\neq k$;
\item\label{b} (transitive imprimitive) $H=(\Sy_m\wr\Sy_k)\cap G$, with $n=mk$, $m>1$ and $k>1$;
\item\label{c} (affine) $H=\AGL_k(p)\cap G$, with $n=p^k$ and $p$ prime;
\item\label{d} (diagonal) $H=(T^k.(\Out(T)\times\Sy_k))\cap G$, with $T$ nonabelian simple, $k>1$ and $n=|T|^{k-1}$;
\item\label{e} (primitive wreath) $H=(\Sy_m\wr\Sy_k)\cap G$, with $n=m^k$, $m\geq5$ and $k>1$;
\item\label{f} (primitive almost simple) $H$ is almost simple and primitive on $\{1,2,\ldots,n\}$.
\end{enumerate}
For a maximal subgroup $H$ in a given case, our strategy for proving Proposition~\ref{THM002} is to show that every maximal subgroup with the same order as $H$ is stated as in Proposition~\ref{THM002}. This is achieved in Lemmas~\ref{PROP001} to~\ref{PROP005} for the respective cases~\eqref{a} to~\eqref{f} of $H$, while the remaining case is addressed in Proposition~\ref{THM002}.
%We accomplish this in Propositions~\ref{PROP001}--\ref{PROP005} for the cases~\eqref{a}--\eqref{f} of $H$, respectively, and then Theorem~\ref{THM002} is arrived by applying Theorem~\ref{THM004}.
Let us start with the first lemma.

\begin{lemma}\label{PROP001}
Let $\Al_n\leq G\leq\Sy_n$. Let $H$ be an intransitive maximal subgroup of $G$. Suppose that a maximal subgroup $K$ of $G$ has the same order as $H$. Then one of the following holds:
\begin{enumerate}[{\rm(a)}]
\item\label{PROP001.1} $H$ and $K$ are conjugate by some element in $\Al_n$;
\item\label{PROP001.2} $n=6$, and $K$ is either transitive imprimitive or $\PSL_2(5)\leq K\leq\PGL_2(5)$.
\end{enumerate}
\end{lemma}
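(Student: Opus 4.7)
The plan is to case-split on the O'Nan--Scott type (i)--(vi) of the maximal subgroup $K$ and show that in every case but two very particular configurations at $n=6$, $K$ must already be of intransitive type and $\Al_n$-conjugate to $H$. Throughout I will write $H=(\Sy_m\times\Sy_k)\cap G$ with $m>k\geq 1$ and $n=m+k$, so that $|H|$ equals $m!\,k!$ or $m!\,k!/2$.

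I would first dispose of \emph{intransitive $K$}. If $K=(\Sy_{m'}\times\Sy_{k'})\cap G$ with $m'>k'$ and $m'+k'=n$, then the identity $m'!\,k'!=m!\,k!$ quickly forces $\{m,k\}=\{m',k'\}$: assuming $m'>m$, the product $m'!/m!$ consists of $m'-m$ integers each strictly larger than $m\geq k+1$, whereas $k!/k'!$ consists of the same number of integers each at most $k$, a contradiction. Hence $K$ and $H$ are $\Sy_n$-conjugate; since $m\geq 3$ the subgroup $\Sy_m\times\Sy_k$ contains a transposition, so $\N_{\Sy_n}(H)\not\subseteq\Al_n$ and the conjugacy can be adjusted to lie inside $\Al_n$, giving conclusion~(a).

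For \emph{transitive imprimitive $K=(\Sy_a\wr\Sy_b)\cap G$} with $ab=n$, $a,b\geq 2$, I would exploit the equation $(a!)^b\,b!=m!\,k!$ through Bertrand's postulate (Lemma~\ref{lem0.4}): any prime $p$ with $n/2<p<n$ automatically satisfies $a,b<p$ (since $ab=n<2p$), hence $v_p((a!)^bb!)=0$, while $v_p(m!\,k!)\geq 1$ whenever $p\leq m$. This squeezes $m$ into the open interval $(n/2,p_0)$, where $p_0$ is the smallest prime exceeding $n/2$, so $m$ must in fact be a composite integer in a very short window. A direct inspection shows this window is empty for every $n\geq 4$ except $n=6$ (where $m=4$ is forced, and one verifies $(2!)^3\cdot 3!=48=4!\cdot 2!$), together with a short list of sporadic degrees (e.g.\ $n=10,14,16,18,22$). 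At each sporadic value I would perform one more $p$-adic check at a carefully chosen smaller prime, backed up by a direct size comparison via Stirling's formula (Lemma~\ref{lem0.2}), to rule out equality. The only surviving case is $(n,m,k,a,b)=(6,4,2,2,3)$, which gives the first half of conclusion~(b).

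For the \emph{primitive cases} (iii)--(vi), Lemma~\ref{lem000} bounds $|K|<3^n$. A Stirling estimate on the minimum of $m!\,k!$, attained at the most balanced admissible split $m=\lceil(n+1)/2\rceil$, $k=\lfloor(n-1)/2\rfloor$, shows $|H|>3^n$ for every $n\geq 12$, so no primitive match is possible in that range. For the remaining small degrees $5\leq n\leq 11$ the primitive subgroups of $\Sy_n$ form a short and classical list, and inspection yields exactly one order coincidence: $|\PGL_2(5)|=120=|\Sy_5|$ at $n=6$, $(m,k)=(5,1)$, producing $K$ with $\PSL_2(5)\leq K\leq\PGL_2(5)$, which is the second half of~(b). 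The main obstacle I anticipate is the transitive imprimitive case at the sporadic degrees where Bertrand alone leaves a nonempty candidate window; getting a uniform knock-out at each such $(n,m,k,a,b)$ without simply enumerating will require combining a second prime with a Stirling comparison, and that bookkeeping is the fiddliest part of the argument.
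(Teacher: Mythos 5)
Your treatment of the intransitive and primitive cases is correct and matches the paper's in substance: the factorial comparison forcing $\{m,k\}=\{m',k'\}$ and the Mar\'oti bound $|K|<3^n$ played against a Stirling lower bound for $|H|$ are exactly the tools used there. (One numerical slip: the crossover is not at $n\geq12$; for $G=\Al_{12}$ the balanced split gives $7!\cdot5!/2=302400<3^{12}=531441$, so a few more small degrees need direct inspection --- the paper simply disposes of everything up to $n=22$ by direct verification before running the asymptotic argument.)

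The genuine gap is in the transitive imprimitive case. Your Bertrand sieve correctly forces $\lceil(n+1)/2\rceil\leq m<p_0$ with $p_0$ the least prime exceeding $n/2$, but the set of $n$ for which this window is nonempty is \emph{infinite}, not ``a short list of sporadic degrees'': it contains every even $n$ for which $n/2+1$ is composite ($n=16,26,28,34,38,\dots$), and for each such $n$ the candidate $K=(\Sy_{n/2}\wr\Sy_2)\cap G$ survives the sieve. For that candidate the order equation $m!\,k!=2\bigl((n/2)!\bigr)^2$ is equivalent to $\binom{c+k'}{k'}=2\binom{c}{k'}$ with $c=n/2$ and $k'=m-n/2$, and ruling this out for all $c$ and $k'$ is precisely the content of the paper's Lemma~\ref{lem0.1}, whose proof requires a page of binomial and prime-counting estimates plus a computer check for $k'<152$; moreover there are degrees (e.g.\ $n=226$, where the prime gap after $113$ has length $14$ and the window contains $m=121,122,123$, i.e.\ exactly the values $k'\in\{8,9,10\}$ satisfying $k'^2<c<2k'^2$) at which no single extra prime obviously finishes the job. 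So the hard Diophantine kernel of this case is left untouched by your plan. The paper's actual route is different and uniform: it splits into block size $2$ (a direct size comparison), exactly two blocks (reduction to Lemma~\ref{lem0.1}), and the remaining case, which is killed by an explicit estimate showing $|H|/|K|>1$ in terms of the largest proper divisor $r\notin\{2,n/2\}$ of $n$. As written, your proposal does not close the transitive imprimitive case.
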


\begin{proof}
It is straightforward to verify the conclusion when $n\leq22$. Assume $n\geq23$, and let $H=(\Sy_m\times\Sy_k)\cap G$ with $n=m+k$ and $m>k$. Note that the function $f(x)=x!\cdot(n-x)!$ is strict increasing when $x\geq n/2$. Hence, if $K$ is in intransitive, then $H$ and $K$ can be seen as the stabilizers in $G$ of some subsets of $\{1,\ldots,n\}$ with the same size, and clearly, they are conjugate in $\Al_n$. Moreover, since Lemma~\ref{lem0.2} leads to
\[
|H|\geq\frac{m!\cdot k!}{2}
\geq\frac{1}{2}\left(\left\lceil\frac{n-1}{2}\right\rceil!\right)^2
>\left(\frac{n-1}{2e}\right)^{n-1}>3^{n},
\]
it follows from Lemma~\ref{lem000} that $K$ is not primitive.
%as $|H|$ is clearly not equal to $\M_{11}$, $\M_{12}$, $\M_{23}$ or $\M_{24}$. Suppose that $K=(\Sy_a\wr\Sy_b)\cap G$ is in case~\eqref{e}, where $n=a^b$. Then as $n>16$, we obtain
%\[
%2\max\{a,b\}\leq2\max\{n^{\frac{1}{2}},\log_2n\}<\frac{n}{2}\leq m.
%\]
%According to Lemma~\ref{lem0.4}, we may take a prime $p$ with $\max\{a,b\}<p<m$ so that $p\mid|H|$ while $p\nmid |K|$, a contradiction.

Now $K$ is transitive imprimitive, so that $K=(\Sy_{c}\wr\Sy_{n/c})\cap G$ for some proper divisors $c$ and $d$ of $n$ with $n=cd$. If $c=2$, then $|K|\leq|\Sy_2\wr\Sy_{d}|<|\Sy_{d+1}\times\Sy_{d-1}|\leq|H|$, a contradiction. If $d=2$, then the condition $|H|=|K|$ gives $\binom{n}{m}=\binom{n}{c}/2$, which implies $\binom{m}{m-c}=2\binom{c}{m-c}$, contradicting Lemma~\ref{lem0.1}. Hence, $d\notin\{2,n/2\}$. Let $r$ be the largest proper divisor of $n$ not in $\{2,n/2\}$. Then $3\leq n/r\leq r\leq\lfloor(n-1)/2\rfloor$. By Lemma~\ref{lem0.6}, we obtain $|K|\leq|(\Sy_{r}\wr\Sy_{n/r})\cap G|$ and so
\[
\frac{|H|}{|K|}
\geq\frac{\left\lfloor\frac{n+1}{2}\right\rfloor!\cdot \left\lfloor\frac{n-1}{2}\right\rfloor!}{(r!)^{\frac{n}{r}}\cdot\big(\frac{n}{r}\big)!}
=\frac{\left\lfloor\frac{n+1}{2}\right\rfloor\left(\left\lfloor\frac{n-1}{2}\right\rfloor\cdots
(r+1)\right)^2}{(r!)^{\frac{n}{r}-2}\cdot\big(\frac{n}{r}\big)!}
>\frac{\left\lfloor\frac{n+1}{2}\right\rfloor\cdot(r+1)^{n-2r-2}}{ (r^{(r-1)(\frac{n}{r}-2)-1}\cdot2)\cdot
(r^{\frac{n}{r}-3}\cdot3\cdot2)}>1,
\]
a contradiction. This completes the proof.
\end{proof}

The next lemma deals with the case where $H$ is a transitive imprimitive maximal subgroup.

\begin{lemma}\label{PROP002}
Let $\Al_n\leq G\leq\Sy_n$, and let $H$ be a transitive imprimitive maximal subgroup of $G$. Suppose that a maximal subgroup $K$ of $G$ has the same order as $H$. Then either $H$ and $K$ are conjugate by some element in $\Al_n$, or $n=6$ and $K$ is intransitive.
\end{lemma}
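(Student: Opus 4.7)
\emph{Proof plan.} Write $H = (\Sy_m \wr \Sy_k) \cap G$ with $n = mk$ and $m, k \geq 2$. The plan is to split the argument according to the type of $K$ (primitive, intransitive, or transitive imprimitive), handling small $n$ (say $n \leq 23$) by direct inspection of the divisors of $n$ and the corresponding maximal subgroup orders, and focusing the main argument on $n \geq 24$.

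For $n \geq 24$, the first step is to establish $|H| > 3^n$, so that Lemma~\ref{lem000} precludes $K$ from being primitive. By Lemma~\ref{lem0.6}\eqref{lem0.6.1}, the order $|\Sy_m \wr \Sy_k|$ is minimized over the factorizations $n = mk$ (with $m, k \geq 2$) by taking $m$ to be the smallest prime divisor of $n$; applying Stirling's formula (Lemma~\ref{lem0.2}) to this smallest case then yields the required bound. The main obstacle here is verifying the estimate uniformly across small and moderate $n$, particularly for odd $n$ where the smallest prime divisor may be $3$ or larger, so that values such as $n = 27$ or $n = 33$ require somewhat tighter attention; one may alternatively invoke $|H| > 2^n$ together with the stronger form of Lemma~\ref{lem000} valid for $n > 24$.

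With $K$ primitive excluded, two cases remain. If $K$ is intransitive, I apply Lemma~\ref{PROP001} with the roles of $H$ and $K$ swapped: since the present $H$ is transitive, the conjugacy conclusion of that lemma is impossible, which forces $n = 6$ and contradicts $n \geq 24$, thereby accounting for the exceptional case recorded in the statement. If $K = (\Sy_a \wr \Sy_b) \cap G$ is transitive imprimitive with $n = ab$, then $|H| = |K|$ together with Lemma~\ref{lem0.7} forces $(m, k) = (a, b)$, so $H$ and $K$ are the stabilizers in $G$ of two partitions of $\{1, \ldots, n\}$ into $k$ blocks of size $m$, and are hence conjugate by some $\sigma \in \Sy_n$. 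Since $\Sy_m \wr \Sy_k$ normalizes $H$ and contains odd permutations (e.g., a transposition within one block), $\sigma$ may be adjusted by an element of this normalizer to lie in $\Al_n$, yielding the desired $\Al_n$-conjugacy.
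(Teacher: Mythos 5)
Your overall architecture coincides with the paper's: Lemma~\ref{lem0.7} disposes of the case where $K$ is transitive imprimitive (with the same parity adjustment to obtain $\Al_n$-conjugacy), Lemma~\ref{PROP001} applied with the roles of $H$ and $K$ swapped handles intransitive $K$ and produces the $n=6$ exception, and an order bound combined with Lemma~\ref{lem000} excludes primitive $K$. However, the step on which the primitive case rests contains a genuine error. You claim, citing Lemma~\ref{lem0.6}\eqref{lem0.6.1}, that $|\Sy_m\wr\Sy_k|$ is minimized over the factorizations $n=mk$ by taking $m$ to be the smallest prime divisor of $n$. That part of Lemma~\ref{lem0.6} only compares a factorization with its transpose and does not yield such a statement, and the claim is in fact false: for $n=12$ one has $|\Sy_3\wr\Sy_4|=31104<46080=|\Sy_2\wr\Sy_6|$, and for large $n$ the minimizing block size drifts away from the smallest prime (for $n=100$ the factorization $4\cdot25$ gives order about $10^{60}$ versus about $10^{80}$ for $2\cdot50$). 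Since you need a lower bound on $|H|$ valid for the actual, unknown factorization, estimating only an incorrectly identified ``minimal'' case does not establish $|H|>2^n$. The paper instead bounds $(m!)^k\,k!/2$ uniformly in $(m,k)$ via Stirling, obtaining $|H|>\max\{2^k,(m/e)^n\}\cdot(k/e)^k>2^n$; your argument needs to be repaired along these lines (a short case split on small versus large $m$ also works).

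Second, your cutoff at $n=24$ is too aggressive. At $n=24$ the stronger conclusion $|K|<2^n$ of Lemma~\ref{lem000} is unavailable (it requires $n>24$), while the weaker bound $3^{24}\approx2.8\times10^{11}$ exceeds $|(\Sy_3\wr\Sy_8)\cap\Al_{24}|\approx3.4\times10^{10}$, so neither of your two routes closes the case $n=24$; the same comparison shows that $|H|>3^n$ also fails at $n=27$ and $n=30$, so the ``primary'' $3^n$ route is not salvageable on its own. You must either include $n=24$ in the range checked directly (the paper checks all $n\le48$ and applies the bound only for $n\ge49$) or supply a separate argument there. The remaining parts of your proposal, namely the transitive imprimitive and intransitive cases, are correct and essentially identical to the paper's.
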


\begin{proof}
If $K$ is transitive imprimitive, then Lemma~\ref{lem0.7} implies that $H$ and $K$ are conjugate by some element in $\Al_n$. So, in view of Lemma~\ref{PROP001}, we only need to prove that $K$ cannot be primitive. It is straightforward to verify this when $n\leq48$. Assume $n\geq49$, and let $H=(\Sy_m\wr\Sy_k)\cap G$ with $n=mk$. Then Lemma~\ref{lem0.2} implies that
\[
|H|\geq\frac{(m!)^k\cdot k!}{2}>\max\left\{2^k,\left(\frac{m}{e}\right)^{n}\right\}\cdot \left(\frac{k}{e}\right)^k>2^n.
\]
We thus conclude from Lemma~\ref{lem000} that $K$ is not primitive, as desired.
\end{proof}

For a positive integer $n$ and a prime $p$, recall that $n_p$ is the largest power of $p$ dividing $n$ and that $v_p(n)=\log_p(n_p)$ is the exponent of $p$ in $n_p$.

\begin{lemma}\label{PROP003}
Let $\Al_n\leq G\leq\Sy_n$, and let $H$ be an affine maximal subgroup of $G$. Suppose that a maximal subgroup $K$ of $G$ has the same order as $H$. Then $H$ and $K$ are conjugate in $\Sy_n$.
\end{lemma}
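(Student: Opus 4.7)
The plan is to run through the six O'Nan--Scott-type classes~\eqref{a}--\eqref{f} for the maximal subgroup $K$ and show that only the affine class~\eqref{c} is compatible with $|K|=|H|$; uniqueness of $\AGL_k(p)$ up to $\Sy_n$-conjugacy then yields the desired conjugacy. The recurring arithmetic tool is the identity $|H|_p=p^{k(k+1)/2}$, which comes from the order formula $|\AGL_k(p)|=p^k\prod_{i=0}^{k-1}(p^k-p^i)$.

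First I would dispose of the easy classes. The intransitive type~\eqref{a} and the transitive imprimitive type~\eqref{b} are ruled out by applying Lemmas~\ref{PROP001} and~\ref{PROP002} to $K$: either forces $H$ to be of the same type as $K$, or puts us in the $n=6$ exception; but $H$ is affine and $n=p^k$ is never equal to $6$. The diagonal type~\eqref{d} would require $n=|T|^{k'-1}$ for a nonabelian simple group $T$ with $k'\geq2$, contradicting that every nonabelian simple group has at least three distinct prime divisors by Burnside's $p^aq^b$-theorem. The affine type~\eqref{c} gives the conclusion immediately: $K=\AGL_{k'}(p')\cap G$ with $p'^{k'}=p^k$ forces $p'=p$ and $k'=k$, and since all regular elementary abelian $p$-subgroups of $\Sy_{p^k}$ (hence their normalizers $\AGL_k(p)$) form a single $\Sy_n$-conjugacy class, $K$ is $\Sy_n$-conjugate to $H$.

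For the primitive wreath type~\eqref{e}, I would write $K=(\Sy_m\wr\Sy_{k'})\cap G$ with $n=m^{k'}$, $m\geq5$, $k'\geq2$, forcing $m=p^a$ with $ak'=k$. Combining the lower bound $|K|\geq(m!)^{k'}/2\geq(m/e)^n/2$ from Stirling with the crude upper bound $|H|\leq|\AGL_k(p)|<p^{k^2+k}$ gives $\log|K|=\Omega(n)$ against $\log|H|=O((\log n)^2)$, so $|H|<|K|$ once $n$ exceeds an explicit threshold. The finitely many remaining small tuples $(p,a,k')$ (starting with $(5,1,2)$, $(7,1,2)$, $(3,2,2)$, $(2,3,2)$) are disposed of by a direct $p$-part comparison: the Diophantine equation $k'(p^a-1)/(p-1)+v_p(k'!)=k(k+1)/2$ fails in each case.

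The almost simple type~\eqref{f} is the main obstacle. Writing $T=\Soc(K)$, transitivity of $T$ on $n=p^k$ points means $T$ has a subgroup of prime-power index $p^k$, so Guralnick's theorem (Lemma~\ref{lem0.0}) restricts $T$ to $\Al_n$, $\PSL_m(q)$ with $n=(q^m-1)/(q-1)$, or one of $\PSL_2(11),\M_{11},\M_{23},\PSp_4(3)$. The case $T=\Al_n$ forces $K=\Al_n$ with $G=\Sy_n$, whose order $n!/2$ vastly exceeds the polynomial-in-$n$ bound $p^{k^2+k}$ on $|H|$. The four sporadic exceptions give $n\in\{11,23,27\}$ and are eliminated by direct comparison with $|\AGL_1(11)|$, $|\AGL_1(23)|$, $|\AGL_3(3)|$. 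For $T=\PSL_m(q)$ with $q=r^f$, the condition $(q^m-1)/(q-1)=p^k$ forces $r\neq p$, since otherwise the left-hand side is $\equiv1\pmod p$ for $m\geq1$. When $m=2$ the equation reduces to $q+1=p^k$, which by classical Catalan-type arguments admits only Mersenne configurations and the sporadic $(q,p,k)=(8,3,2)$; when $m\geq3$, Zsigmondy's theorem forces $p$ to be a primitive prime divisor of $(q,m)$ and confines $(q,m,p,k)$ to a short explicit list (for instance $(2,5,31,1)$ and $(5,3,31,1)$, each with $n=31$). Each surviving case is then eliminated by a direct order comparison with $|\AGL_k(p)|$, completing the argument.
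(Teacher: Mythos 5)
Your overall architecture coincides with the paper's: reduce to primitive $K$ via Lemmas~\ref{PROP001} and~\ref{PROP002}, rule out the diagonal type because $n=p^k$ is a prime power, handle the wreath type by a size-versus-$p$-part estimate (your threshold argument is essentially the paper's inequality $|K|<|K|_p^2$), and invoke Guralnick's theorem (Lemma~\ref{lem0.0}) for the almost simple type. The genuine gap is in the subcase $T=\PSL_m(q)$ with $n=(q^m-1)/(q-1)=p^k$. You assert that for $m\geq3$ Zsigmondy's theorem ``confines $(q,m,p,k)$ to a short explicit list''. No such list exists: Lemma~\ref{lem0.10} only identifies $p$ as a primitive prime divisor of $(q,m)$, it bounds nothing. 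For $k=1$ the equation merely asks that $(q^m-1)/(q-1)$ be prime, which has unboundedly many known solutions (one for every Mersenne exponent with $q=2$, plus $(3,3)$, $(5,3)$, $(2,19)$, \dots), and for $k\geq2$ a complete determination of this Nagell--Ljunggren-type equation with $q$ a prime power is not available. Hence the concluding step ``each surviving case is eliminated by a direct order comparison'' cannot be executed as a finite check. The same defect, in milder form, affects $m=2$: the solution set of $q+1=p^k$ also contains the $k=1$ Fermat configurations $q=2^{2^s}$, $p=q+1$, which are missing from your list, and the Mersenne family is itself potentially infinite.

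What is needed in place of the enumeration is a uniform argument, and this is exactly what the paper supplies: since $p$ is a primitive prime divisor of $(q,m)$ (so in particular $p>m\geq2$ is odd), one has $v_p(|K|)\leq v_p(|T|)+v_p(|\Out(T)|)= k+v_p(f)\leq k+\log_p(k\log_2p)$, while $v_p(|H|)=k(k+1)/2$; comparing forces $k$ to be very small, and the residual case $k=1$ is settled by $|\PSL_m(q)|\leq|K|=|H|\leq|\AGL_1(p)|<p^2=n^2$, which forces $m=2$ and then contradicts nonsolvability of $K$ because $|K|$ would divide $p(p-1)$. (A parametric order comparison such as $|\PSL_m(q)|\gtrsim q^{m^2-1}$ versus $|\AGL_k(p)|<p^{k(k+1)}$ can also be made to work, but it must be carried out uniformly in $(q,m,k)$ rather than case by case.) The Mersenne branch of $m=2$ needs the analogous $2$-adic comparison $k(k+1)/2-1\leq v_2(|\Aut(\PSL_2(q))|)$. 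With these replacements your proof goes through; the remaining parts are sound.
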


\begin{proof}
Let $H=\AGL_k(p)\cap G$ with $n=p^k$. The conclusion is clearly true if $K$ is also affine. By Lemmas~\ref{PROP001} and~\ref{PROP002}, it suffices to show that, assuming that $K$ is primitive, $K$ must be affine. In fact, considering the value of $n$, we see that $K$ cannot be diagonal.

Assume that $K=(\Sy_{p^a}\wr\Sy_{b})\cap G$ with $n=p^{ab}$, where $p^a\geq5$ and $b>1$. Define $d=1$ if $p=2$ and $|H|=|\AGL_k(p)|/2$, and $d=0$ otherwise. Note that
\[
|H|_p^2=p^{k(k+1)-2d}>p^{\frac{k(k+1)-d}{2}}\cdot\prod_{i=1}^{k}(p^i-1)\geq|H|.
\]
Hence, $|K|<|K|_p^2$, and we derive from Lemmas~\ref{lem0.2} and~\ref{lem0.5}\eqref{lem0.5.1} that
\[
\left(\frac{p^a}{e}\right)^{p^ab}\cdot b!<\frac{1}{2}\left(p^a!\right)^b\cdot b!\leq|K|<|K|_p^2\leq p^{\frac{2(p^a-1)b}{p-1}}\cdot p^{\frac{2(b-1)}{p-1}}<\left(p^{\frac{2}{p-1}}\right)^{p^ab}.
\]
This yields $p^a\leq2^3$ and $b\leq3$. For these cases, one may directly check that $|H|\neq|K|$, a contradiction.

For the rest of the proof, assume that $K$ is primitive almost simple with socle $T$. Then $T$ is a transitive subgroup of $\Sy_n$, and the stabilizer $L$ of $T$ has index $n=p^a$. All possibilities for such $(T,L)$ are listed in Lemma~\ref{lem0.0}. It is straightforward to verify that $|H|\neq|K|$ when $T$ is in~\eqref{lem0.0.1} or~\eqref{lem0.0.3} of Lemma~\ref{lem0.0}. Thus, suppose that $T=\PSL_m(q)$ and $(q^m-1)/(q-1)=n=p^k$. Then, it is clear that $(q,m)\neq(2,6)$. Moreover, if $(q,m)=(2^r-1,2)$ for some $r$, then $p^k=q+1=2^r$, $v_2(|T|)=r=k$ and so
\[
\frac{k(k+1)}{2}-1\leq v_2(|H|)=v_2(|K|)\leq v_2(|T|)+v_2(|\Out(T)|)<k+\log_2(2k).
\]
This inequality holds only when $k\leq3$, and straightforward computation then shows $|H|\neq|K|$. Consequently, $(q,m)\neq(2,6)$ or $(2^r-1,2)$ for any $r$. By Lemma~\ref{lem0.10}, there exists a prime which divides $q^m-1$ but not $q^i-1$ for any positive integer $i<m$. In fact, this prime is $p$ as $(q^m-1)/(q-1)=p^k$. Note that $v_p(|T|)=k$ and that
\[
v_p(|\Out(T)|)\leq\log_p(\log_2q)<\log_p(k\log_2p).
\]
We obtain
\[
\frac{k(k+1)}{2}\leq v_p(|H|)=v_p(|K|)<k+\log_p(k\log_2p),
\]
which implies that one of the following holds:
\begin{itemize}
\item $(k,p)=(3,2)$,
\item $k=2$ and $p\leq3$,
\item $k=1$.
\end{itemize}
By a direct calculation, the first two cases contradict $|H|=|K|$. Suppose that $k=1$. Then
\[
|\PSL_m(q)|\leq|K|=|H|\leq|\AGL_1(p)|<p^2=\left(\frac{q^m-1}{q-1}\right)^2.
\]
This leads to $m=2$, and so $p-1=q$ is a prime power. However, $|K|=|H|$ divides $|\AGL_1(p)|=p(p-1)=pq$, which is impossible as $K$ is nonsolvable. The proof is complete.
\end{proof}

The following lemma addresses the case where $H$ is a diagonal maximal subgroup.

\begin{lemma}\label{PROP004}
Let $\Al_n\leq G\leq\Sy_n$, and let $H$ be a diagonal maximal subgroup of $G$. Suppose that a maximal subgroup $K$ of $G$ has the same order as $H$. Then $K$ is also diagonal. Moreover, $H$ and $K$ have socle $S^k$ and $T^k$ respectively, where $S\cong T$ is nonabelian simple or $\{S,T\}=\{\POm_{2m+1}(q),\PSp_{2m}(q)\}$ with $q$ odd and $m\geq3$.
%$H=(T^k\cdot(\Out(T)\times\Sy_k))\cap G$ and $K=(U^k\cdot(\Out(U)\times\Sy_k))\cap G$, where $\{T,U\}=\{\POm_{2m+1}(q),\PSp_{2m}(q)\}$ for some $m\geq3$ and odd $q$.
\end{lemma}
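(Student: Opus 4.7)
The plan is to systematically rule out all possibilities for $K$ except the diagonal type, then to compare socles via Proposition~\ref{THM004}. Since $H$ is diagonal, the degree is $n = |T|^{k-1} \geq 60 > 6$, so the $n = 6$ exceptions in Lemmas~\ref{PROP001} and~\ref{PROP002} do not apply. Applying Lemmas~\ref{PROP001},~\ref{PROP002},~and~\ref{PROP003} with the roles of $H$ and $K$ interchanged: if $K$ were intransitive, transitive imprimitive, or affine, then $K$ and $H$ would have to be $\Sy_n$-conjugate, but no subgroup of diagonal type is conjugate to a subgroup of any of those three types. Hence $K$ must be primitive of diagonal, primitive wreath, or primitive almost simple type.

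To rule out the primitive wreath case, suppose $K = (\Sy_m \wr \Sy_b) \cap G$ with $m^b = n = |T|^{k-1}$, $m \geq 5$, and $b \geq 2$. I would compare $|H| \geq |T|^k|\Out(T)|k!/2$ with $|K| \leq (m!)^b b!$, use the identity $m^b = |T|^{k-1}$ together with Stirling's bounds (Lemma~\ref{lem0.2}) and $|\Out(T)| < \log_2|T|$ (Lemma~\ref{LEM008}), and show that the ratio $|H|/|K|$ is bounded away from $1$ apart from a finite list of small parameters that can be handled by direct computation.

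The main obstacle is the primitive almost simple case. Here $K$ has nonabelian simple socle $U$ acting transitively on $n$ points, so $|T|^{k-1}$ divides $|U|$, while $|T|^k$ divides $|H|=|K|$ which in turn divides $|U|\cdot|\Out(U)|$. The plan is to derive from this tight constraints forcing the prime divisors of $|U|$ to lie essentially among those of $|T|$, invoke Lemma~\ref{lem0.8} to enumerate the possibilities for $(U, L_0)$ where $L_0$ is the point stabilizer in $U$, and then rule out each candidate in Table~\ref{TABLE001} by testing the identities $|K|=|H|$ and $n=|T|^{k-1}$ against the explicit formulas for $|U|$.

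Finally, when $K$ is diagonal with socle $S^{k'}$ and $|S|^{k'-1} = |T|^{k-1}$, the identity $|H| = |K|$ reduces, after dividing by the degree, to $d_K|\Aut(T)|k! = d_H|\Aut(S)|k'!$ for some $d_H, d_K \in \{1,2\}$. If $k > k'$, then $|S| = |T|^{(k-1)/(k'-1)} > |T|$, and combining this identity with $|\Out(S)| \geq 1$ and Lemma~\ref{LEM008} yields $|T|^{(k-k')/(k'-1)} \leq 2(k!/k'!)\log_2|T|$. I would show that this inequality admits only small numerical solutions in which $|S| = |T|^{(k-1)/(k'-1)}$ fails to be a valid simple group order, forcing $k = k'$. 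Then $|T| = |S|$ and $|\Aut(T)|/|\Aut(S)| \in \{1/2, 1, 2\}$, so Proposition~\ref{THM004} gives $T \cong S$, or $\{T, S\} = \{\PSp_{2m}(q), \POm_{2m+1}(q)\}$ with $q$ odd and $m \geq 3$, or $\{T, S\} = \{\PSL_3(4), \PSL_4(2)\}$. The last case is excluded because $|\Out(\PSL_3(4))|/|\Out(\PSL_4(2))| = 6 \notin \{1/2, 1, 2\}$, completing the proof.
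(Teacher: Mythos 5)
Your overall architecture matches the paper's: reduce to $K$ primitive of diagonal, wreath, or almost simple type via Lemmas~\ref{PROP001}--\ref{PROP003}, eliminate the latter two, and compare socles in the diagonal case. But in each of the three hard sub-cases the argument you propose either fails as stated or omits the idea that actually closes the case. Most concretely, in the diagonal-versus-diagonal comparison your inequality $|T|^{(k-k')/(k'-1)}\leq 2(k!/k'!)\log_2|T|$ does \emph{not} admit only small solutions: for $k=k'+1$ it reads $|T|\leq\left(2(k'+1)\log_2|T|\right)^{k'-1}$, a range of $|T|$ that grows without bound as $k'$ grows, so there is no finite list to check, and the assertion that $|T|^{(k-1)/(k'-1)}$ ``fails to be a valid simple group order'' is exactly the nontrivial content you still owe: after extracting the gcd of the exponents, $|S|^{k'-1}=|T|^{k-1}$ with $k\neq k'$ forces one of $|S|,|T|$ to be a proper perfect power, and ruling that out for simple groups is~\cite[Theorem~6.1]{KLST1990}, which is precisely what the paper cites to get $k=k'$ and the socle alternatives in one step. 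Note also that Proposition~\ref{THM004} concerns equal orders of almost simple groups, not equal \emph{powers} of orders, so it cannot substitute for that theorem; your use of it after $k=k'$ is established, and your exclusion of $\{\PSL_3(4),\PSL_4(2)\}$ via outer automorphism groups, are both fine.

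The wreath and almost simple cases have the same character. For $K=(\Sy_m\wr\Sy_b)\cap G$, a magnitude comparison of $|H|$ with $|K|$ only forces $k$ (and with it $b$) to be very large --- it does not produce a finite list of parameters, since the factor $k!$ in $|H|$ can in principle absorb the growth of $(m!)^b$. The paper closes this case arithmetically: Bertrand's postulate pins $k/2<b<2k$, and a valuation count at a prime $r\in(m/2,m)$, which is coprime to $|S|$, contradicts Lemma~\ref{lem0.5}\eqref{lem0.5.2}. Similarly, Table~\ref{TABLE001} contains infinite families (intransitive stabilizers in $\Al_m$ for all $d<m/2$, and classical families with unbounded rank and field size), so ``testing the identities against the explicit formulas'' is not a finite verification; the paper needs, for instance, that $\binom{m}{d}=|S|^{k-1}$ being a perfect power forces $d\leq4$, together with Lemma~\ref{CORO001} and further Bertrand/Legendre valuation arguments. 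You identify the right reductions and the right auxiliary lemma (Lemma~\ref{lem0.8}), but the quantitative steps you sketch would not terminate without these additional arithmetic inputs.
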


\begin{proof}
Let $H=(S^k.(\Out(S)\times\Sy_k))\cap G$ with $S$ nonabelian simple. In light of Lemmas~\ref{PROP001}--\ref{PROP003}, we only need to consider that $K$ is diagonal, primitive wreath, or primitive almost simple. Assume that $K$ is diagonal with socle $T^\ell$, where $T$ is nonabelian simple. Then $n=|S|^{k-1}=|T|^{\ell-1}$. It follows from~\cite[Theorem~6.1]{KLST1990} that $k=\ell$ and either $S\cong T$ or
\[
\{S,T\}\in\big\{\{\POm_{2m+1}(q),\PSp_{2m}(q)\}\mid\text{$q$ odd and $m\geq3$}\big\}\cup\{\PSL_4(2),\PSL_3(4)\}.
\]
%$\{T,U\}\in\{\{\POm_{2m+1}(q),\PSp_{2m}(q)\},\{\PSL_4(2),\PSL_3(4)\}\}$, where $q$ is odd and $m\geq3$.
All of these cases can occur except for $\{S,T\}=\{\PSL_4(2),\PSL_3(4)\}$, which is excluded by checking $|H|$ and $|K|$, as desired.

Assume that $K=(\Sy_a\wr\Sy_b)\cap G$, where $a\geq5$ and $b\geq2$. Then since $a^b=n=|S|^{k-1}$ has at least three prime divisors, it holds $a\geq30$. Applying Lemma~\ref{lem0.2}, we obtain
\[
|S|^{k+1}\cdot k^k>|H|=|K|\geq\left(a!\right)^b>\left(\frac{a}{e}\right)^{ab}>a^{7b}
=n^7=|S|^{7(k-1)}.
\]
This yields that $k^k>|S|^{6k-8}\geq|S|^{2k}$ and so $k>|S|^{2}>2|S|$. By Lemma~\ref{lem0.4}, there exists a prime $p\in(k/2,k)$, so that $v_p(|K|)=v_p(|H|)=1$, and so we have $b\geq p>k/2$. Moreover, $b<2k$ (otherwise, Lemma~\ref{lem0.4} indicates the existence of a prime in $(k,b)$ dividing $|K|$ but not $|H|$), and it follows that
\[
a=|S|^{\frac{k-1}{b}}>|S|^{\frac{k-1}{2k}}\geq|S|^{\frac{1}{4}}>2|\Out(S)|,
\]
where the last inequality is from Lemma~\ref{LEM008} together with
%the fact $|\Out(T)|\leq\log_2|T|$ for finite nonabelian simple groups $T$ (see~\cite{Stefan})
direct verification for small $S$. Take a prime $r\in(a/2,a)$, and note that $\gcd(r,|S|)=1$ as $\gcd(r,|S|^{(k-1)/b})=\gcd(r,a)=1$. We conclude that $b+v_r(b!)=v_r(|K|)=v_r(|H|)=v_r(k!)$, which contradicts Lemma~\ref{lem0.5}\eqref{lem0.5.2} as $k<2b$.

Assume that $K$ is primitive almost simple with socle $T$, and let $L$ be a stabilizer in $K$ for the primitive action on $\{1,2,\ldots,n\}$. Since $H=(S^k.(\Out(S)\times\Sy_k))\cap G$ and $n=|S|^{k-1}$, every prime divisor of $|H|$ divides $|H|/n$. It follows that every prime divisor of $|K|=|H|$ divides $|H|/n=|K|/n=|L|$, and so all possibilities for such $(T,L)$ are listed in~\cite[Table~10.7]{LPS2000}, as Lemma~\ref{lem0.8} asserts. A direct inspection shows that the finite cases in~\cite[Table~10.7]{LPS2000} cannot happen, and we only need to consider the infinite families of triples $(K,T,L)$ in Table~\ref{TABLE001}.

First, let $T=\Al_m$ and $L\cap T=(\Sy_m\times\Sy_{m-d})\cap\Al_m$, as in the first row of Table~\ref{TABLE001}. For $m\leq10$, one may directly verify that $|H|\neq|K|$. Therefore, $m\geq11$. If $k=2$, then
\[|T|\leq|H|\leq2|S|^2|\Out(S)|\leq2n^2\log_2n,
\]
contradicting Lemma~\ref{CORO001}\eqref{CORO001.2}. Thus, let $k\geq3$. As $n=|S|^{k-1}=\binom{m}{d}$, it follows from~\cite[Chapter~3]{TheBook} that $d\leq4$ and so $n\leq\binom{m}{4}<(m/2)^4$. By Lemma~\ref{lem0.4}, there exists a prime $r\in(m/2,m)$. Then
%Hence, by Lemma~\ref{lem0.4}, there exists a prime $r$ such that
\begin{equation}\label{eq0.4}
r>\frac{m}{2}>n^{\frac{1}{4}}=|S|^{\frac{k-1}{4}}\geq
|S|^{\frac{1}{2}}>|\Out(S)|.
\end{equation}
Moreover, take a prime divisor $p\geq5$ of $|S|$. Then Lemma~\ref{lem0.5}\eqref{lem0.5.1} yields that
\[
r>\frac{m}{2}>\frac{m-1}{p-1}\geq v_p(|K|)=v_p(|H|)\geq k.
\]
Combining this with~\eqref{eq0.4}, we conclude that $v_r(|H|)=kv_r(|S|)\neq1=v_r(|K|)$, a contradiction.

Next let $K$ be a simple group of Lie type over $\mathbb{F}_q$ of characteristic $p$ as described in Table~\ref{TABLE001}. Then we derive from Lemma~\ref{CORO001}\eqref{CORO001.1} and Lemma~\ref{lem0.5}\eqref{lem0.5.1} that
\[
|S|^{k-1}=n<\left(\frac{|T|}{n}\right)_p^2\leq
\left(\frac{|H|}{n}\right)_p^2
\leq|S|_p^2\cdot|\Out(S)|^2\cdot\left(p^{\frac{2}{p-1}}\right)^{k-1}
\leq|S|_p^2\cdot|S|\cdot4^{k-1}.
\]
If $k\geq4$, then $(|S|/4)^3\leq(|S|/4)^{k-1}<|S|_p^2\cdot|S|$ and so $|S|^2<64|S|_p^2$, impossible. Therefore, $k\leq3$, and we derive from Lemma~\ref{LEM008} that
\[
|T|\leq|K|=|H|\leq|S|^{k}\cdot|\Out(S)|\cdot k!<2|S|^{2(k-1)}\cdot\log_2|S|\leq2n^2\log_2n,
\]
which contradicts Lemma~\ref{CORO001}\eqref{CORO001.2} as it is straightforward to verify that $T\neq\PSL_2(7)$. This completes the proof.
\end{proof}

We now deal with the case where $H$ is a primitive wreath maximal subgroup.

\begin{lemma}\label{PROP005}
Let $\Al_n\leq G\leq\Sy_n$, and let $H$ be a primitive wreath maximal subgroup of $G$. Suppose that a maximal subgroup $K$ of $G$ has the same order as $H$. Then $H$ and $K$ are conjugate in $\Sy_n$.
\end{lemma}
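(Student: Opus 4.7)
The plan is to follow the pattern of Lemma~\ref{PROP004}. Write $H=(\Sy_m\wr\Sy_k)\cap G$ with $n=m^k$, $m\geq 5$ and $k\geq 2$. First I would invoke Lemmas~\ref{PROP001}--\ref{PROP004} with the roles of $H$ and $K$ interchanged to rule out $K$ being intransitive, transitive imprimitive, affine, or diagonal: each of those lemmas, applied with $K$ in the role of its hypothesis subgroup, would force $H$ itself to be of the corresponding type, which it is not (the sporadic $n=6$ exceptions of Lemmas~\ref{PROP001} and~\ref{PROP002} being precluded by $n=m^k\geq 25$). Hence $K$ is either primitive wreath or primitive almost simple.

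For the primitive wreath case, write $K=(\Sy_a\wr\Sy_b)\cap G$ with $a\geq 5$, $b\geq 2$ and $a^b=n=m^k$. The equality $|H|=|K|$ is equivalent to $|\Sy_m\wr\Sy_k|=|\Sy_a\wr\Sy_b|$, or to $|\Sy_m\wr\Sy_k|=2|\Sy_a\wr\Sy_b|$ if $G=\Al_n$ and exactly one of the two wreath products is contained in $\Al_n$. In either situation Lemma~\ref{lem0.9} applied to $a^b=m^k$ yields $(a,b)=(m,k)$. Since any two maximal subgroups of $G$ of the shape $(\Sy_m\wr\Sy_k)\cap G$ are $\Sy_n$-conjugate (they are the full setwise stabilizers of two partitions of $\{1,\ldots,n\}$ into $k$ blocks of size $m$), we conclude that $H$ and $K$ are conjugate in $\Sy_n$.

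The bulk of the work lies in the primitive almost simple case. Let $T=\Soc(K)$ and let $L$ be a point stabilizer, so $|L|=|K|/n=|H|/n=((m-1)!)^k\,k!/d$ where $d\in\{1,2\}$. Every prime divisor of $|K|=|H|$ divides $(m!)^k\,k!$, and hence is at most $\max(m,k)$. When $m$ is composite or $m\leq k$, every such prime already divides $((m-1)!)^k\,k!$ and therefore divides $|L|$; Lemma~\ref{lem0.8} then restricts $(T,L)$ to either the finite list of exceptions in~\cite[Table~10.7]{LPS2000} or to one of the infinite families in Table~\ref{TABLE001}. I would exclude each infinite family by mimicking the end of the proof of Lemma~\ref{PROP004}: for alternating $T$ one contradicts $|T|\leq|H|$ using the bound $|T|>2n^2\log_2 n$ of Lemma~\ref{CORO001}\eqref{CORO001.2}, and for classical $T$ of Lie type one combines the identity $n=m^k$ with the inequality $(|T|_p/n_p)^2>n$ of Lemma~\ref{CORO001}\eqref{CORO001.1}; the finite exceptions can be ruled out by direct order comparison.

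The genuinely delicate subcase is when $m$ is a prime with $m>k$, for then $v_m(|H|)=k$ while $v_m(|L|)=0$, so Lemma~\ref{lem0.8} does not apply directly. Here I would exploit the arithmetic constraint that every prime divisor of $|T|$ is at most $m$, together with $v_m(|K|)=k$ and the fact that $K$ acts primitively on $n=m^k$ points (so its Sylow $m$-subgroup, of order $m^k$, cannot be normal in $K$ lest $K$ be affine rather than almost simple). These restrictions leave only a very short list of candidate socles $T$ whose orders are $\{p\,:\,p\leq m\}$-numbers, and a finite check against the classification of primitive almost simple groups of degree $n=m^k$ finishes the argument. The main obstacle is precisely this subcase: the clean divisibility machinery behind Lemma~\ref{lem0.8} breaks down, and one must substitute a mixture of Sylow-theoretic constraints, arithmetic bounds, and explicit inspection of small primitive groups in its place.
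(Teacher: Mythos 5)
Your reduction to the two cases ``$K$ primitive wreath'' and ``$K$ primitive almost simple'', and your use of Lemma~\ref{lem0.9} for the wreath case, match the paper. The genuine gap is in the subcase you yourself flag as delicate, namely $m$ prime. The paper's key observation there is that $n=m^k$ is a prime power, so the point stabilizer $L$ has prime power index in $T=\Soc(K)$, and Guralnick's theorem (Lemma~\ref{lem0.0}) classifies all such pairs $(T,L)$; after discarding the sporadic cases, the one surviving infinite family is $T=\PSL_a(q)$ with $(q^{af}-1)/(q^f-1)=m^k$, which the paper eliminates by combining Zsigmondy's theorem (so that $m$ is a primitive prime divisor of $(p,af)$, whence $m>af>f$) with Stirling and Legendre estimates to squeeze $(m/e)^{mk}<(p^{3/(p-1)})^{mk}$ and reduce to a handful of small $(p,m)$. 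Your proposal replaces this with Sylow-theoretic remarks and ``a finite check against the classification of primitive almost simple groups of degree $n=m^k$''. That check is not finite: $n$ ranges over infinitely many values, and the linear groups of degree $(q^a-1)/(q-1)$ form an infinite family of genuine candidates whose orders are products of primes at most $m$ and which your stated constraints (Sylow $m$-subgroup of order $m^k$ not normal, all prime divisors $\leq m$) do not exclude. Without Lemma~\ref{lem0.0} or an equivalent substitute, this subcase is open.

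A secondary problem sits in your composite-$m$ case. For $T=\Al_c$ in Table~\ref{TABLE001} you propose to contradict $|T|\leq|H|$ via the bound $|T|>2n^2\log_2 n$ of Lemma~\ref{CORO001}\eqref{CORO001.2}, mimicking Lemma~\ref{PROP004}. But that comparison only works in the diagonal setting, where $|H|\leq 2|S|^2|\Out(S)|\leq 2n^2\log_2 n$ when $k=2$; here $|H|=(m!)^k\,k!/d$ vastly exceeds $2m^{2k}\log_2(m^k)$ already for $m=5$, $k=2$, so no contradiction arises. The paper instead takes a prime $r\in(c/2,c)$ by Bertrand's postulate, deduces $v_r(|H|)=1$ and hence $k\geq r>c/2$, and contradicts $k\leq v_5(|H|)\leq (c-1)/4$ from Lemma~\ref{lem0.5}\eqref{lem0.5.1}. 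Your treatment of the Lie-type families in Table~\ref{TABLE001} via Lemma~\ref{CORO001}\eqref{CORO001.1} is in the right spirit, though note the paper first needs $m\geq30$ (from $n=m^k$ having at least three prime divisors) before the $p$-part estimate $|T|_p<|K|^{1/3}$ closes that branch.
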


\begin{proof}
In view of Lemmas~\ref{PROP001}--\ref{PROP004}, it suffices to consider that $K$ is primitive wreath or primitive almost simple. In fact, if $K$ is primitive wreath, the conclusion immediately follows from Lemma~\ref{lem0.9}. In the following, suppose that $K$ is primitive almost simple. Let $T=\Soc(K)$, let $L$ be a stabilizer in $K$ for the primitive action on $\{1,2,\ldots,n\}$, and let $H=(\Sy_m\wr\Sy_k)\cap G$ with $n=m^k$, where $m\geq5$ and $k>1$.

Suppose that $m$ is not a prime. Since $H=(\Sy_m\wr\Sy_k)\cap G$ and $n=m^k$, every prime divisor of $|H|$ divides $|H|/n$. It follows that every prime divisor of $|K|=|H|$ divides $|H|/n=|K|/n=|L|$, and so all possibilities for such $(T,L)$ are listed in~\cite[Table~10.7]{LPS2000}, as Lemma~\ref{lem0.8} asserts. A direct inspection shows that the finite cases cannot happen, and we only need to consider the infinite families of triples $(K,T,L)$ as listed in Table~\ref{TABLE001}. If $T=\Al_c$ for some $c$, then by Lemma~\ref{lem0.4}, there exists a prime $r\in\left(c/2,c\right)$, so that $v_r(|H|)=v_r(|K|)=1$, which implies that $k\geq r>c/2$. However, this contradicts the fact from Lemma~\ref{lem0.5}\eqref{lem0.5.1} that $k\leq v_5(|H|)=v_5(|K|)\leq(c-1)/4<c/2$. Consequently, $K$ is of Lie type in Table~\ref{TABLE001}. Note that $m^k=n=|T\,{:}\,L\cap T|$ is listed in Table~\ref{TABLE001}. If $T=\PSL_2(p)$, where $p$ is a Mersenne prime, then $m^k=n=p+1$ must be a power of $2$ to a prime exponent, contradicting $m\geq5$ and $k>1$. Moreover, one may directly verify that $T\neq\PSp_4(3)$. Hence, we deduce from Lemma~\ref{CORO001}\eqref{CORO001.3} that $n=m^k$ has at least three prime divisors and so $m\geq30$. However, this together with Lemmas~\ref{lem0.5} and~\ref{lem0.2} yields that for the characteristic $p$,
\[
|T|_p\leq|H|_p\leq\left(p^{\frac{m-1}{p-1}}\right)^k\cdot p^{\frac{k-1}{p-1}}=\left(p^{\frac{1}{p-1}}\right)^{mk-1}
\leq2^{mk-1}<\left(\frac{m}{e}\right)^{\frac{mk}{3}}
<(m!)^{\frac{k}{3}}<|H|^{\frac{1}{3}}=|K|^{\frac{1}{3}},
\]
which contradicts the fact $|T|_p^3>|\Aut(T)|$ from Lemma~\ref{CORO001}\eqref{CORO001.1}.

To complete the proof, let $m$ be a prime. Then $T$ is a transitive subgroup of $\Sy_n$ with a stabilizer $L$ of prime power index. All possibilities for such $(T,L)$ are listed in Lemma~\ref{lem0.0}. It is straightforward to check that $|H|\neq|K|$ if $K$ is in~\eqref{lem0.0.1} or~\eqref{lem0.0.3} of Lemma~\ref{lem0.0}. So let $T=\PSL_a(q)$ with $q=p^f$, and without loss of generality, let $(p,af)\neq(2,6)$. Now $n=(p^{af}-1)/(p^f-1)=m^k$, where $af>2$ as $m$ is an odd prime. In view of Lemma~\ref{lem0.10}, we see that $m$ is a primitive prime divisor of $(p,af)$. As a consequence, $m\equiv1\ (\bmod\ af)$, and in particular, $m>af>f$. Hence,
\[
|K|\leq|\Aut(T)|<2f\cdot q^{a^2-1}<2m\cdot q^{\frac{3}{2}a(a-1)}=2m|T|_p^3.
\]
Combining this with $|H|=|K|$ and Lemmas~\ref{lem0.2} and~\ref{lem0.5}\eqref{lem0.5.1}, we obtain
\[
\left(\frac{m}{e}\right)^{mk}<\frac{(m!)^k}{2m}\leq\frac{|H|}{2m}<|T|_p^3\leq|H|_p^3
\leq\left(p^{\frac{m-1}{p-1}}\right)^{3k}\cdot\left(p^{\frac{k-1}{p-1}}\right)^3<\left(p^{\frac{3}{p-1}}\right)^{mk}.
\]
This holds only in the following cases:
\begin{itemize}
\item $p=2$, $m\leq21$,
\item $p=3$, $m\leq14$,
\item $p=5$, $m\leq9$,
\item $p=7$, $m\leq7$,
\item $p=11$, $m=5$.
\end{itemize}
Recall that $m\geq5$ is a prime, $k>1$ and $af\mid(m-1)$. Then it is readily verified that the equation $(p^{fa}-1)/(p^f-1)=m^k$ holds only when $(p,a,f,m)=(3,5,1,11)$. However, $T\neq\PSL_5(3)$ as $|H|=|K|$. This completes the proof.
\end{proof}

We are now ready to prove Proposition~\ref{THM002}.

\begin{proof}[Proof of Proposition~$\ref{THM002}$]
For $n=6$, the conclusion can be verified by \textsc{Magma} computation. Thus, we may assume without loss of generality that $\Al_n\leq G\leq\Sy_n$. Then the conclusion follows from Lemmas~\ref{PROP001}--\ref{PROP005} and Proposition~\ref{THM004}.
\end{proof}

\section{Biprimitive semisymmetric graphs of alternating and symmetric groups}\label{SEC4}

In this section, we investigate biprimitive semisymmetric graphs of alternating groups $\Al_n$ and symmetric groups $\Sy_n$. A general construction of semisymmetric graphs of $\Al_n$ is presented in the first subsection. We then use this construction, along with previously established results, to show a classification of biprimitive semisymmetric graphs of $\Al_n$ and $\Sy_n$, namely, Theorem~\ref{THM003}. In Subsection~\ref{SUBSEC4.3}, for such graphs of twice odd order, a more explicit characterization is given in Proposition~\ref{CORO003}. At the end of this subsection, we exhibit an example of semisymmetric graphs in case~\eqref{CORO003.2} but not in case~\eqref{CORO003.1} of Proposition~\ref{CORO003}, which also serves as an example for case~(\ref{THM003.1}.3) of Theorem~\ref{THM003}.

\subsection{Semisymmetric graphs of alternating groups and proof of Theorem~\ref{THM003}}\label{SUBSEC4.1}

The following proposition provides an equivalent group-theoretic characterization for a large family of biprimitive graphs $\B(\Al_n,H,H,HgH)$ to be semisymmetric.
%The following construction provides a necessary and sufficient condition for a large family of $\Al_n$-biprimitive graphs $\B(\Al_n,H,H,HgH)$ to be semisymmetric.

\begin{proposition}\label{CON001}
Let $G=\Al_n$ with $n>6$, and let $H$ be a maximal subgroup of $G$ with index $d$. Suppose that each overgroup of the primitive group $G$ acting on $[G\,{:}\,H]$ by right multiplication has socle $G$ or $\Al_d$. Then for an element $g\in G$ with $HgH\neq Hg^{-1}H$, the bi-coset graph $\B(G,H,H,HgH)$ is semisymmetric if and only if either $\N_{\Sy_n}(H)=H$, or $\N_{\Sy_n}(H)>H$ and $Hg^{-1}H\neq Hg^{\iota}H$ for all (or equivalently, for some) $\iota\in\N_{\Sy_n}(H)\setminus H$.
\end{proposition}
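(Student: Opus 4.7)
My plan is to analyze $A := \Aut(\Gamma)$ assuming $\Gamma$ is not semisymmetric, and to show that a part-swapping automorphism must arise from right multiplication by some $\iota \in \N_{\Sy_n}(H)$ on the enlarged coset space $[\Sy_n:H]$. Writing $A^+$ for the index-$2$ subgroup of $A$ stabilizing each part, the hypothesis $HgH \neq Hg^{-1}H$ first rules out the degenerate bi-coset graphs---complete bipartite, empty, perfect matching, and $K_{d,d}$ minus a matching---since in each of these the relevant edge double coset ($G$, $\emptyset$, $H$, or $G \setminus H$) is self-inverse. Hence Lemma~\ref{LEM005} gives that $A^+$ acts faithfully on each part; Lemma~\ref{CORO002}, applied to the biprimitive group $A^+$ of part-size $d \geq n \geq 7$, further excludes $\Soc(A^+) \cong \Al_d$. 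Combined with the proposition's hypothesis, $A^+$ is then almost simple with socle $\Al_n$, and since $\Out(\Al_n) = \mathbb{Z}_2$ for $n > 6$, we have $A^+ \in \{\Al_n, \Sy_n\}$.

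The next step would be to rule out every possibility for $(A^+, A)$ except $(A^+, A) = (\Al_n, \Sy_n)$. If $A^+ = \Sy_n$, Lemma~\ref{LEM002} applied to $A = \Sy_n.2$ forces $A = \Sy_n \times \mathbb{Z}_2$ (the almost simple alternative would require $A \leq \Aut(\Al_n) = \Sy_n$, impossible by order), and the central involution centralizes the right-multiplication action of $G$, so Lemma~\ref{LEM004} gives $HgH = Hg^{-1}H$, a contradiction. If $A^+ = \Al_n$, the same argument excludes $A = \Al_n \times \mathbb{Z}_2$, leaving $A = \Sy_n$ almost simple. An order count then shows that the $A$-stabilizer of any vertex equals $H$, so $\Sy_n$ acts on $V(\Gamma)$ via its coset action on $[\Sy_n:H]$. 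The two $\Al_n$-orbits on $[\Sy_n:H]$ have stabilizers $H$ and $H^{x_0}$ for any fixed $x_0 \in \Sy_n \setminus \Al_n$, and matching the $\Al_n$-set structure of $V(\Gamma)$ forces $H^{x_0}$ to be $\Al_n$-conjugate to $H$, which is equivalent to $\Sy_n = \Al_n \cdot \N_{\Sy_n}(H)$, i.e., $\N_{\Sy_n}(H) > H$.

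Picking $\iota \in \N_{\Sy_n}(H) \setminus H$, the identification $Ka \leftrightarrow H\iota a$ between the second part of $V(\Gamma)$ and the $\Al_n$-orbit $\{Hx : x \in \Sy_n \setminus \Al_n\}$ is well-defined precisely because $\iota$ normalizes $H$. Tracing right multiplication by $\iota$ on the edge $\{H, Kg\}$ produces $\{K, H(\iota g \iota)\}$, which is an edge of $\Gamma$ iff $(\iota g \iota)^{-1} \in HgH$; since $\iota^2 \in H$ (as $|\N_{\Sy_n}(H):H|=2$), the double coset $H(\iota g \iota)^{-1}H$ simplifies to $Hg^{-\iota}H$, so the condition becomes $Hg^\iota H = Hg^{-1}H$. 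This furnishes both directions: if the coset condition holds, right multiplication by $\iota$ yields a part-swapping automorphism, so $\Gamma$ is not semisymmetric; if $\Gamma$ is not semisymmetric, the preceding analysis forces both $\N_{\Sy_n}(H) > H$ and the condition on $\iota$. The \emph{for all/for some} equivalence follows since $|\N_{\Sy_n}(H):H|=2$, so any two elements of $\N_{\Sy_n}(H) \setminus H$ differ by an element of $H$. The most delicate step, to my mind, is the $\Al_n$-set identification of $[\Sy_n:H]$ with $V(\Gamma)$: it breaks down precisely when $\N_{\Sy_n}(H) = H$, since then $H^{x_0}$ lies outside the $\Al_n$-conjugacy class of $H$ and no $\Sy_n$-extension of the action exists, which is what ultimately forces semisymmetry in that case.
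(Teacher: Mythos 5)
Your proposal is correct and follows essentially the same route as the paper: the same reduction of $\Aut(\Gamma)$ via Lemmas~\ref{LEM005}, \ref{CORO002}, \ref{LEM002} and~\ref{LEM004} to the case $\Aut(\Gamma)=\Sy_n$ acting as right multiplication on $[\Sy_n\,{:}\,H]$, the same derivation that $\N_{\Sy_n}(H)>H$ from matching vertex stabilizers, and the same double-coset computation showing that the part-swapping map (your ``right multiplication by $\iota$'' is exactly the paper's map $Hx\mapsto Kx^{\iota}$, $Kx\mapsto Hx^{\iota}$) is an automorphism precisely when $Hg^{\iota}H=Hg^{-1}H$. The only cosmetic differences are that you treat the subcase $A^{+}=\Sy_n$ explicitly rather than folding it into the socle argument, and that you verify edge-preservation on a single representative edge, which suffices since the condition is a double-coset identity.
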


\begin{proof}
Let $g\in G$ with $HgH\neq Hg^{-1}H$, and let $N=\N_{\Sy_n}(H)$. Then $|N\,{:}\,H|=1$ or $2$. Let $\Gamma=\B(G,H,H,HgH)$ with the left part $U$ and right part $W$. Let $A^+$ be the subgroup of $\Aut(\Gamma)$ fixing setwise $U$ and $W$. To avoid confusion, we write the vertices from $U$ as $Hx$ and those from $W$ as $Kx$, where $K=H$ and $x\in G$. We first show the ``only if'' part. Suppose on the contrary that $N>H$ and there is an element $\iota\in N\setminus H$ such that $Hg^{-1}H=Hg^{\iota}H$. Let $\varphi$ be a permutation on $U\cup W$ which maps $Hx$ to $Kx^{\iota}$ and maps $Kx$ to $Hx^{\iota}$ for each $x\in G$. It is straightforward to verify that $\varphi$ is well-defined and bijective. Moreover, noting that $H\iota=H\iota^{-1}$ as $G$ has index $2$ in $\Sy_n$, we obtain
\begin{align*}
\{Hx,Ky\}\in E(\Gamma)\Leftrightarrow yx^{-1}\in HgH&\Leftrightarrow xy^{-1}\in Hg^{-1}H=Hg^{\iota}H\\
&\Leftrightarrow x^{\iota}(y^{\iota})^{-1}\in HgH\Leftrightarrow \{Hy^{\iota},Kx^{\iota}\}\in E(\Gamma).
\end{align*}
This means that $\varphi\in\Aut(\Gamma)\setminus A^+$, contradicting the assumption that $\Gamma$ is semisymmetric.

Next we prove the ``if'' part. Since $HgH\neq Hg^{-1}H$ implies that $\Gamma$ is not a complete bipartite graph, a perfect matching, or their bipartite complements. We derive from Lemmas~\ref{LEM005} and~\ref{CORO002} that $A^+$ acts faithfully on both $U$ and $W$ with $\Soc(A^+)\neq\Al_d$. Thus, by the condition of the proposition, it holds $\Soc(A^+)=G$. Suppose on the contrary that $\Gamma$ is vertex-transitive, namely, $\Aut(\Gamma)=A^+.2$. Since $HgH\neq Hg^{-1}H$, it follows from Lemmas~\ref{LEM004} and~\ref{LEM002} that $\Aut(\Gamma)$ also has socle $G$. In particular, $\Aut(\Gamma)=\Sy_n$ and $A^+=G$. Since $H$ fixes exactly two vertices of $\Gamma$, the subgroup of $\Sy_n$ stabilizing the set of these two vertices normalizes $H$, and thus, $N>H$. Let $\iota\in N\setminus H$. Now the action of $\Sy_n$ on the vertex set of $\Gamma$ is permutation-isomorphic to the right multiplication action of $\Sy_n$ on $[\Sy_n\,{:}\,L]$ such that the following diagram
\vspace{-0.5em}
\begin{figure}[H]
\centering
\begin{tikzpicture}[scale=0.8]
\node at (0, 0) {$[\Sy_n\,{:}\,L]$};
\node at (5.5, 0) {$[\Sy_n\,{:}\,L]$};
\node at (0, 2) {$U\cup W$};
\node at (5.5, 2) {$U\cup W$};
\draw[->, >=latex, line width=0.25mm] (0.9,0) -- (4.6,0);
\draw[->, >=latex, line width=0.25mm] (0.9,2) -- (4.6,2);
\draw[->, >=latex, line width=0.25mm] (-0.06,1.65) -- (-0.06,0.35);
\draw[->, >=latex, line width=0.25mm] (5.44,1.65) -- (5.44,0.35);
\node at (-0.5, 1.1) {$\psi$};
\node at (5.9, 1.1) {$\psi$};
\node at (2.7, 0.4) {$R(\Sy_n)$};
\node at (2.4, 2.45) {$G\leq\Sy_n$};
\draw[->, >=latex, line width=0.25mm] (2.8,2.15) -- (2.8,0.75);
\node at (3.3, 1.4) {$R$};
\end{tikzpicture}
\end{figure}
\vspace{-0.5em}
\noindent commutes, where $\psi$ is a bijection, $L=H$ (we introduce $L$ to avoid confusion between points in $U\cup W$ and $[\Sy_n\,{:}\,H]$),  and $R$ is the right multiplication action. Since $H$ and $K$ are the only two vertices of $\Gamma$ stabilized by $H$ while $L$ and $L\iota$ are the only two points in $[\Sy_n\,{:}\,L]$ stabilized by $R(L)$, it follows that $H^\psi=L$ and $K^\psi=L\iota$. Hence, for each $x\in G$,
\[
(Hx)^\psi=H^{R(x)\psi}=H^{\psi R(x)}=Lx\ \text{ and }\
(Kx)^\psi=K^{R(x)\psi}=K^{\psi R(x)}=L\iota x.
\]
Take an arbitrary $y\in G$. The images of $H$ and $Ky$ under $\iota\in\Sy_n$ are
\[
H^\iota=H^{\psi R(\iota)\psi^{-1}}=(L\iota)^{\psi^{-1}}=K\ \text{ and }\
(Ky)^\iota=(Ky)^{\psi R(\iota)\psi^{-1}}=(L\iota y\iota)^{\psi^{-1}}=H\iota^{-1}y\iota,
\]
where the last equation follows from $L\iota=L\iota^{-1}$ and $\iota^{-1}y\iota\in G$. This yields that
\begin{align*}
y\in HgH\Leftrightarrow\{H,Ky\}\in E(\Gamma)
&\Leftrightarrow\{(Ky)^\iota,H^\iota\}\in E(\Gamma)\\
&\Leftrightarrow\{H\iota^{-1}y\iota,K\}\in E(\Gamma)\Leftrightarrow\iota^{-1}y^{-1}\iota\in HgH
\Leftrightarrow\iota^{-1}y\iota\in Hg^{-1}H.
\end{align*}
We thereby conclude that
\[
y\in Hg^{\iota}H
\Leftrightarrow\iota y\iota^{-1}\in HgH
\Leftrightarrow\iota^{-1}(\iota y\iota^{-1})\iota\in Hg^{-1}H
\Leftrightarrow y\in Hg^{-1}H,
\]
contradicting the assumption that $Hg^{-1}H\neq Hg^{\iota}H$. This completes the proof.
\end{proof}

\begin{remark}\label{RMK003}
In the proof of Proposition~\ref{CON001}, it can be observed that the proposition still holds if the supposition that ``each overgroup of the primitive group $G$ acting on $[G\,{:}\,H]$ by right multiplication has socle $G$ or $\Al_d$'' is replaced by ``$\Soc(A^+)=\Al_n$ or $\Al_d$'', where $A^+$ is the automorphism group of $\B(G,H,H,HgH)$ that preserves each part setwise.
\end{remark}

Proposition~\ref{CON001} (under its assumptions) gives a criterion for determining when a biprimitive graph of $\Al_n$ is semisymmetric. This is a relaxation of the criterion Theorem~\ref{THM001}\eqref{THM001.3} for biprimitive graphs of $\Al_n$ to be semisymmetric. In fact, if $K=\N_{\Sy_n}(H)>H$, then for $g\in G$ and $\iota\in K\setminus H$, we have
\[
\Al_n(K\cap K^g)=\Sy_n
\,\Rightarrow\,
H\iota^{-1}\cap(H\iota^{-1})^g\neq\emptyset
\,\Rightarrow\,
gH\cap Hg^{\iota}\neq\emptyset
\,\Rightarrow\,
Hg^{\iota}H=HgH\neq Hg^{-1}H,
\]
as stated in Proposition~\ref{CON001}.

We are now in a position to prove Theorem~\ref{THM003}.

\begin{proof}[Proof of Theorem~$\ref{THM003}$]
Let $G$ have socle $\Al_n$. Then Lemma~\ref{LEM003} (together with the analysis in the paragraph following it) implies that $\Gamma$ is isomorphic to the bi-coset graph $\B(G,L,R,RgL)$ for some $g\in G$ and some maximal subgroups $L$ and $R$ of $G$. View $\Gamma$ as such a bi-coset graph, and let $U$ and $W$ be the biparts of $\Gamma$. Since the smallest biprimitive semisymmetric graph has $80$ vertices~\cite{DM1999}, we have $d:=|U|\geq40$, and so $n>6$. In addition, since $\Gamma$ is semisymmetric, it is neither a complete bipartite graph nor an empty graph. Then Lemma~\ref{LEM005} states that $G$ acts faithfully on both $U$ and $W$.

Assume that $L$ and $R$ are conjugate in $G$. By Lemma~\ref{LEM003} and the semisymmetry of $\Gamma$, we may let $L=R$ and $RgL\neq Rg^{-1}L$. If there exists an overgroup $H$ of $G$ in $\Sy_d$ whose socle is neither $T$ nor $\Al_d$, then (\ref{THM003.1}.1) of Theorem~\ref{THM003} follows by~\cite[Table~II--VI]{LPS1987}. Now assume that, except for $\Al_d$ and $\Sy_d$, each overgroup of $G$ on $U$ has the same socle as $G$. If $G=\Sy_n$, then the triple $(G,L,g)$ satisfies conditions of Theorem~\ref{THM001}, and so $\Gamma$ is semisymmetric. If $G=\Al_n$, then $\Gamma$ is a semisymmetric graph described in Proposition~\ref{CON001}, as stated in~(\ref{THM003.1}.3) of Theorem~\ref{THM003}.
%Based on the two cases $G=\Al_n$ or $\Sy_n$, the graph $\Gamma$ is semisymmetric if and only if it is a graph as described in Theorem~\ref{THM001} or Construction~\ref{CON001}.

Assume that $L$ and $R$ are not conjugate in $G$. Since $L$ and $R$ have the same order, all the possibilities for $(L,R)$ are outlined in Proposition~\ref{THM002}. If $G=\Sy_n$ and $L$ and $R$ are isomorphic and non-conjugate, then inspecting~\eqref{THM002.2}--\eqref{THM002.3} of Proposition~\ref{THM002}, we see that $L$ and $R$ are almost simple with the same socle, as in Theorem~\ref{THM003}\eqref{THM003.2}. If $G=\Sy_n$ and $L$ and $R$ are non-isomorphic, then $L$ and $R$ are described in Proposition~\ref{THM002}\eqref{THM002.2}--\eqref{THM002.3}, leading to Theorem~\ref{THM003}\eqref{THM003.3}. Finally, if $G=\Al_n$, then either $L$ and $R$ are isomorphic and non-conjugate in $G$ as in Theorem~\ref{THM003}\eqref{THM003.2}, or $L$ and $R$ are non-isomorphic, which lies in Proposition~\ref{THM002}\eqref{THM002.2}--\eqref{THM002.3} and hence in Theorem~\ref{THM003}\eqref{THM003.3}.
\end{proof}

\subsection{Proof of Proposition~\ref{CORO003} and an example}\label{SUBSEC4.3}

Based on previously established results, we can obtain Proposition~\ref{CORO003} with minimal in addition effort.

\begin{proof}[Proof of Proposition~$\ref{CORO003}$]
Let $U$ and $W$ be the biparts of $\Gamma$ with $|U|=|W|=d$, and let $A^+$ be the subgroup of $\Aut(\Gamma)$ fixing setwise $U$ and $W$. Then it follows from Lemma~\ref{LEM005} that $A^+$ acts faithfully on both $U$ and $W$. In view of Lemma~\ref{LEM003}, $\Gamma$ is isomorphic to the bi-coset graph $\B(G,L,R,RgL)$ for some $g\in G$ and some core-free maximal subgroups $L$ and $R$ of $G$. Since $|G\,{:}\,L|=|G\,{:}\,R|=d$ is odd and $n\geq8$, we obtain by the classification of primitive groups of odd degree~\cite[Theorem]{LS1985} (see also~\cite{Kantor1987}) that $L$ and $R$ are either an intransitive subgroup or an imprimitive subgroup. Combining this with Lemmas~\ref{PROP001} and~\ref{PROP002}, we deduce that $L$ and $R$ are $G$-conjugate. Hence, it follows from Lemma~\ref{LEM003}\eqref{LEM003.2} that $\Gamma\cong\B(G,H,H,HgH)$ for some intransitive or imprimitive subgroup $H$. Note that Theorem~\ref{THM001}\eqref{THM001.3} always holds when $G=\Sy_n$ or $H$ is an intransitive subgroup of $G$. Therefore, either $(G,H,g)$ satisfies Theorem~$\ref{THM001}$\eqref{THM001.3}, or $H$ is an imprimitive subgroup of $G=\Al_n$, as desired. To complete the proof, it remains to prove the necessary and sufficient conditions stated in Proposition~\ref{CORO003} for the semisymmetry of $\Gamma$.

First assume that each overgroup of $G$ in $\Sy_d$ has socle $\Al_n$ or $\Al_d$. If $(G,H,g)$ satisfies Theorem~\ref{THM001}\eqref{THM001.3}, then Theorem~\ref{THM001} implies that $\Gamma$ is semisymmetric if and only if $HgH\neq Hg^{-1}H$, as in Proposition~\ref{CORO003}\eqref{CORO003.1}. If $H$ is an imprimitive subgroup of $G=\Al_n$, then $\N_{\Sy_n}(H)>H$ and it follows from Proposition~\ref{CON001} that $\Gamma$ is semisymmetric if and only if $Hg^{-1}H\neq Hg^{\iota}H$ for every $\iota\in\N_{\Sy_n}(H)\setminus H$, as Proposition~\ref{CORO003}\eqref{CORO003.2} asserts.

Next assume that there exists an overgroup of $G$ in $\Sy_d$ whose socle is neither $\Al_n$ nor $\Al_d$. Inspecting~\cite[Table~II--VI]{LPS1987}, we see that $d=495$ and the minimal overgroup of $G$ is the permutation group $M:=\Om_{10}^-(2)$ on the set of singular $1$-spaces of the corresponding orthogonal space. Suppose that $M\leq A^+$. Then $\Gamma$ is $M$-biprimitive, and Lemma~\ref{LEM005} asserts that $M$ acts faithfully on both $U$ and $W$. Since the permutation representation of $M$ on the singular $1$-spaces is unique up to $M$-conjugate, the vertex stabilizers of $U$ and $W$ in $M$ must be $M$-conjugate. Therefore, by Lemma~\ref{LEM003}, the valency of $\Gamma$ is equal to some subdegree of $M$, and so $G$ and $M$ share a common suborbit, which contradicts the result in~\cite[Theorem~1]{LPS2002}. Hence, $\Soc(A^+)=\Al_n$ or $\Al_d$. If $(G,H,g)$ satisfies Theorem~\ref{THM001}\eqref{THM001.3}, then we derive from Theorem~\ref{THM001} and Remark~\ref{RMK002} that $\Gamma$ is semisymmetric if and only if $HgH\neq Hg^{-1}H$, as Proposition~\ref{CORO003}\eqref{CORO003.1} asserts. If $H$ is an imprimitive subgroup of $G=\Al_n$, then $\N_{\Sy_n}(H)>H$ and so Proposition~\ref{CON001} and Remark~\ref{RMK003} imply that $\Gamma$ is semisymmetric if and only if $Hg^{-1}H\neq Hg^{\iota}H$ for every $\iota\in\N_{\Sy_n}(H)\setminus H$, as in Proposition~\ref{CORO003}\eqref{CORO003.2}. This completes the proof.
\end{proof}

For semisymmetric graphs in Proposition~\ref{CORO003}\eqref{CORO003.1}, one may construct numerous such examples by using a method similar to the constructions of the two examples in Subsection~\ref{SUBSEC2.3}. In the following, we give a biprimitive semisymmetric graph in case~\eqref{CORO003.2} of Proposition~\ref{CORO003}, but not in case~\eqref{CORO003.1}.

\begin{example}\label{EX1}
Let $G=\Al_{32}$ be the alternating group, let $v=\{\{4i+1,4i+2,4i+3,4i+4\}\mid0\leq i\leq7\}$, and let $H$ be the stabilizer of $v$ in $G$. Take $g$ to be the element of $G$:
\[
(2,5)(3,9)(4,13)(7,10,15,18,11,17,8,14)(12,29,20,25,19,22,16,21)(23,26)(24,30)(28,31).
\]
Then $\B(G,H,H,HgH)$ is a semisymmetric graph of twice odd order in case~\eqref{THM003.2} of Proposition~\ref{CORO003}, but not in case~\eqref{CORO003.1}.
\end{example}

\begin{proof}[Proof of Example~$\ref{EX1}$]
Let $M$ be the stabilizer of $v$ in $\Sy_{32}$. Computation in \textsc{Magma}~\cite{BCP1997} shows that $M\cap M^g\leq G$. Thus, $(G,H,g)$ does not satisfy Theorem~\ref{THM001}\eqref{THM001.3}, and so $\B(G,H,H,HgH)$ is not in case~\eqref{CORO003.1} of Proposition~\ref{CORO003}. Note that $|G\,{:}\,H|=|\Sy_{32}|/|\Sy_4\wr\Sy_8|=59287247761257140625$. According to~\cite[Table~II--VI]{LPS1987}, each overgroup of the primitive group $G$ acting on $[G\,{:}\,H]$ by the right multiplication has socle $G$ or $\Al_{|G\,{:}\,H|}$. Let $\iota=(1,2)\in M\setminus H$, $x=g^{-1}$ and $y=g^\iota$. To prove that $\B(G,H,H,HgH)$ is semisymmetric, by Proposition~\ref{CON001}, we only need to prove $HgH\neq HxH$ and $HxH\neq HyH$. In fact, by direct computation, we obtain the intersection matrices (recall the definition in Subsection~\ref{SUBSEC2.3})
\[
P:=P(v,v^g)=P(v,v^y)=
\begin{pmatrix}
1 & 1 & 1 & 1 & 0 & 0 & 0 & 0 \\
1 & 1 & 0 & 1 & 1 & 0 & 0 & 0 \\
1 & 1 & 0 & 0 & 1 & 1 & 0 & 0 \\
1 & 1 & 1 & 0 & 0 & 1 & 0 & 0 \\
0 & 0 & 1 & 1 & 0 & 0 & 1 & 1 \\
0 & 0 & 0 & 1 & 1 & 0 & 1 & 1 \\
0 & 0 & 0 & 0 & 1 & 1 & 1 & 1 \\
0 & 0 & 1 & 0 & 0 & 1 & 1 & 1 \\
\end{pmatrix}
\]
and $Q:=P(v,v^x)=P^\mathsf{T}$ is the transpose of $P$. Observe that there exists two column vectors of $P$ that have exactly $4$ common coordinates with entry $1$. This also applies to $XPY$ for any permutation matrices $X$ and $Y$, but does not hold for the matrix $P^\mathsf{T}=Q$. Consequently, there are no permutation matrices $X$ and $Y$ such that $Q=XPY$. It follows from Lemma~\ref{LEM012} that $x\notin HgH\cup HyH$, and so $HgH\neq HxH$ and $HxH\neq HyH$, as desired.
\end{proof}

\section*{Acknowledgments}

The second author was partially supported by NNSFC (12471022).
%The first author was supported by the China Scholarship Council (202306370173).

\end{document}